\theoremstyle{plain}
\newtheorem*{definition}{Definition}
\newtheorem{theorem}{Theorem}
\newtheorem{lemma}[theorem]{Lemma}
\newtheorem{corollary}[theorem]{Corollary}
\newtheorem{proposition}[theorem]{Proposition}
\theoremstyle{definition}
\newtheorem{example}[theorem]{Example}
\newtheorem*{exampleagain}{Example~\ref{example}}
\newcommand{\comment}[1]{}
\newcommand{\rea}{\ensuremath{\mathbf{R}}\xspace}
\newcommand{\nat}{\ensuremath{\mathbf{N}}\xspace}
\newcommand{\ints}{\ensuremath{\mathbf{Z}}\xspace}
\newcommand{\intd}[1]{\,\mathrm{d}#1 \,}
\newcommand{\floor}[1]{\left\lfloor #1 \right\rfloor}
\newcommand{\ceil}[1]{\left\lceil #1 \right\rceil}
\newcommand{\leb}{\ensuremath{\mathcal{L}}\xspace}
\newcommand{\Q}{\ensuremath{\mathbf{Q}}\xspace}
\newcommand{\hmeas}{\ensuremath{\mathcal{H}}\xspace}
\newcommand{\ldim}[2]{\ensuremath{\underline{d}_{#1}\!\left(#2\right)}\xspace}
\newcommand{\ldimm}[1]{\ensuremath{\underline{d}_{#1}}\xspace}
\newcommand{\udim}[2]{\ensuremath{\overline{d}_{#1}\!\left(#2\right)}\xspace}
\newcommand{\udimm}[1]{\ensuremath{\overline{d}_{#1}}\xspace}
\newcommand{\anc}{\ensuremath{\preccurlyeq}\xspace}
\newcommand{\itemref}[1]{{\it\ref{#1}}\xspace}
\newcommand{\seq}[1]{\ensuremath{\underline{#1}}\xspace}
\renewcommand{\P}[1]{\ensuremath{\mathbf{P}_{\!#1}}\xspace}
\newcommand{\oball}[2]{\ensuremath{B{\left(#1, #2\right)}}\xspace}
\newcommand{\overbar}[1]{{\mkern 3mu\overline{\mkern-3mu#1\mkern-1mu}\mkern 1mu}}
\newcommand{\cball}[2]{\ensuremath{\overbar{B}{\left(#1, #2\right)}}\xspace}
\newcommand\restr[2]{{
  \left.\kern-\nulldelimiterspace
  #1
  \vphantom{|}
  \right|_{#2}
  }}
\DeclareMathOperator{\supp}{supp}
\DeclareMathOperator{\dist}{dist}
\DeclareMathOperator{\dimh}{dim_H}
\DeclareMathOperator{\dimp}{dim_P}
\DeclareMathOperator{\ldimh}{\underline{dim}_H}
\DeclareMathOperator{\udimh}{\overline{dim}_H}
\DeclareMathOperator{\udimp}{\overline{dim}_P}
\DeclareMathOperator{\udimb}{\overline{dim}_B}
\DeclareMathOperator*{\essinf}{ess\,inf}
\DeclareMathOperator*{\esssup}{ess\,sup}
\DeclareMathOperator*{\E}{\mathbf{E}}
\title{Hausdorff dimension of random limsup sets}
\author{Fredrik Ekstr\"om} \address{Fredrik Ekstr\"om\\ Centre for
  Mathematical Sciences\\ Lund University\\ Box 118\\ 22100
  Lund\\ Sweden} \email{fredrike@maths.lth.se}
\author{Tomas Persson} \address{Tomas Persson\\ Centre for
  Mathematical Sciences\\ Lund University\\ Box 118\\ 22100
  Lund\\ Sweden} \email{tomasp@maths.lth.se}
\begin{document}

\thanks{%
We thank Micha\l{} Rams for explaning things for us. T.~Persson
acknowledges support of the ESI programmes ``Mixing
Flows and Averaging Methods'' and ``Normal Numbers: Arithmetic, Computational
and Probabilistic Aspects'', during which part of this work was done.
}

\begin{abstract}
We prove bounds for the almost sure value of the Hausdorff
dimension of the limsup set of a sequence of balls in $\mathbf{R}^d$
whose centres are independent, identically distributed
random variables. The formulas obtained involve the rate of
decrease of the radii of the balls and multifractal properties
of the measure according to which the balls are distributed,
and generalise formulas that are known to hold for particular
classes of measures.
\end{abstract}

\subjclass[2010]{28A80, 28A78, 60K}

\maketitle

\section{Introduction}
Suppose we are given a sequence $(l_n)$ of positive numbers and that
we randomly toss out arcs of length $l_n$ on a circle of circumference
1. If the centres of the arcs are independent and uniformly
distributed, then by the Borel--Cantelli lemma, almost surely almost
all points of the circle are covered infinitely often if and only if
$\sum l_n$ diverges. In this case, A.~Dvoretzky \cite{Dvoretzky} asked
the question when almost surely all points of the circle are covered
infinitely often. After several partial results, the question
was finally answered by L.~Shepp \cite{Shepp}, who proved that with
probability 1, all points are covered infinitely many times if and
only if $\sum n^{-2} \exp (l_1 + \cdots + l_n)$ diverges.

In case $\sum l_n$ is finite, the set of points that are covered
infinitely often is always of zero Lebesgue measure, but may be
large in some other sense, for instance Hausdorff dimension. A.-H.~Fan
and J.~Wu \cite{FanWu} studied the case $l_n = n^{-\alpha}$, where
$\alpha > 1$ is a fixed number, and proved that the Hausdorff
dimension of the set of points covered infinitely often is almost
surely equal to $1/\alpha$. A.~Durand \cite{Durand} considered
arbitrary lengths $l_n$ and showed that the Hausdorff dimension is
almost surely given by the infimum of those $t$ for which $\sum l_n^t$
converges. Moreover, for this $t$, Durand showed that almost surely
the set of points covered infinitely often has what is called large
intersections. This means that the set belongs to a class of
$G_\delta$ sets of Hausdorff dimension at least $t$, which is closed
under bi-Lipschitz mappings and countable intersections.

E.~J\"arvenp\"a\"a, M.~J\"arvenp\"a\"a, H.~Koivusalo, B.~Li and
V. Suomala \cite{Jarvenpaaetal} considered a $d$-dimensional torus and
a sequence of ellipsoids satisfying certain regularity
conditions. They showed that if the centres of the ellipsoids are
chosen randomly and uniformly distributed on the torus, then the
Hausdorff dimension of the set of points covered infinitely often is
almost surely given by a certain formula. Using potential-theoretic
arguments, their result was generalised by T.~Persson \cite{Persson}
to the case when the torus is covered randomly with a sequence of
arbitrary open sets. In this case the almost sure value of the
Hausdorff dimension was estimated from below, an estimate that can
easily be seen to also be an upper estimate in case the sequence of
open sets are ellipsoids.

D.-J.~Feng, E.~J\"arvenp\"a\"a, M.~J\"arvenp\"a\"a, V.~Suomala
\cite{Fenetal} improved on the result by Persson, by considering the
cover of a Riemannian manifold with open sets that are independently
distributed according to a non-singular measure. They showed that the
estimate from below given by Persson also holds in this case, and that
it gives the correct result if one considers instead the supremum of
all such values obtained by replacing the open sets by their subsets.

In this paper we shall consider an arbitrary Borel probability measure
on $\mathbf{R}^d$, and a sequence of positive numbers $(r_n)$. We are
interested in the almost sure value of the Hausdorff dimension of the
set of points covered infinitely often by balls of radius $r_n$ for
which the centres are chosen independently and distributed according
to the measure $\mu$. This problem was also considered by S.~Seuret
\cite{Seuret}, but he restricted to the case when $\mu$ is a Gibbs
measure invariant under an expanding Markov map. Because of this
assumption, it is possible to use techniques from thermodynamic
formalism, and with such tools Seuret gave the almost sure value of
the Hausdorff dimension in terms of multifractal spectra of the
measure $\mu$.  We will assume nothing about the measure $\mu$ and
give estimates on the Hausdorff dimension from below and above. Our
estimates are, as the formulas by Seuret, in terms of multifractal
properties of $\mu$. Under some assumptions on $\mu$, which are
weaker than the assumption used by Seuret, our estimates from below
and above coincide, and we get a formula for the almost sure value of
the Hausdorff dimension.

\section{Results}
Let $\mu$ be a Borel probability measure on $\rea^d$ and consider the
probability space $(\Omega, \P{\mu})$, where $\Omega = (\rea^d)^\nat$ and
$\P{\mu} = \mu^\nat$. For $\alpha > 0$ and $\omega$ in $\Omega$, let
\[
E_\alpha(\omega) = \limsup_n \cball{\omega_n}{n^{-\alpha}} =
\bigcap_{k=1}^\infty \bigcup_{n=k}^\infty
\cball{\omega_n}{n^{-\alpha}}.
\]
For every real number $s$, the event $\{\hmeas^s(E_\alpha) = \infty\}$
is a tail event with respect to $\P{\mu}$, and thus $\dimh E_\alpha$ is
$\P{\mu}$-a.s.~constant by Kolmogorov's zero-one law
\cite[Theorem~IV.1.1]{Shiryaev}.  Let
\[
f_\mu(\alpha) = \P{\mu}\text{-a.s.~value of } \dimh E_\alpha.
\]
By using families of the form $\{\cball{\omega_n}{n^{-\alpha}}\}_{n = n_0}^\infty$
to cover $E_\alpha(\omega)$, it is not so difficult to see that
$\dimh E_\alpha(\omega) \leq 1 / \alpha$ for every $\omega$, so that
$f_\mu(\alpha) \leq 1 / \alpha$.

The \emph{lower local dimension} and \emph{upper local dimension}
of $\mu$ at a point $x$ in $\rea^d$ are defined by
	\begin{align*}
	\ldim{\mu}{x} &= \sup \{ s; \, \exists c \text{ s.t. } \mu(\oball{x}{r}) \leq
	cr^s \text{ for all } r > 0\},
	\qquad\text{and}\\
	\udim{\mu}{x} &= \inf \{ s; \, \exists c \text{ s.t. } \mu(\oball{x}{r}) \geq
	cr^s \text{ for all } r > 0\},
	\end{align*}
respectively. Define the \emph{local dimension discrepancy} of $\mu$ at $x$ to be
	\[
	\delta_\mu(x) = \udim{\mu}{x} - \ldim{\mu}{x}
	\]
(this is only defined $\mu$-a.e., namely, at those points where at least one
of $\udimm{\mu}$ and $\ldimm{\mu}$ is finite).

\begin{theorem} \label{mainthm}
If\/ $1 / \alpha < \udimh \mu$ then $ f_\mu(\alpha) \geq 1 / \alpha - \delta$,
where
	\[
	\delta = \essinf_{\substack{x \sim \mu \\ \ldim{\mu}{x} > 1 /
	\alpha}} \delta_\mu(x).
	\]
\end{theorem}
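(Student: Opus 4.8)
The plan is to fix $\varepsilon>0$, construct inside $E_\alpha$ a random Cantor-type subset carrying a mass distribution of Frostman exponent $1/\alpha-\delta-\varepsilon$, and then invoke the mass distribution principle to get $\dimh E_\alpha\geq 1/\alpha-\delta-\varepsilon$; letting $\varepsilon\to 0$ gives the theorem. The first step is to pass to a good set. The hypothesis $1/\alpha<\udimh\mu$ means precisely that $\{x:\ldim{\mu}{x}>1/\alpha\}$ has positive $\mu$-measure, so $\delta$ is well defined, and by the definition of $\delta$ as an essential infimum there is a Borel set of positive measure on which $\ldim{\mu}{x}>1/\alpha$ and $\delta_\mu(x)<\delta+\varepsilon$. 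Using the definitions of $\ldim{\mu}{x}$ and $\udim{\mu}{x}$ together with an Egorov/uniformisation argument, I would extract a compact set $A$ with $\mu(A)>0$, a scale $r_0>0$, a constant $C\geq 1$ and exponents $a,b$ with $1/\alpha<a\leq b$ and $b-a<\delta+\varepsilon$, such that
\[
c\,r^{b}\leq\mu(\cball{x}{r})\leq C\,r^{a}\qquad\text{for all } x\in A,\ 0<r\leq r_0 .
\]
It is then enough to build the desired subset using only the balls $\cball{\omega_n}{n^{-\alpha}}$ with $\omega_n\in A$, since the resulting Cantor set is automatically contained in $E_\alpha$; this reduces matters to a measure that is, up to the gap $b-a$, Ahlfors regular with exponent between $a$ and $b$.

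Next I would carry out a greedy multiscale construction using only those balls. Organise the indices into disjoint blocks $I_1<I_2<\cdots$ so that the balls with $n\in I_j$ have radius comparable to a suitable sequence $\rho_j\to 0$, whence the number of available such balls is $\approx\rho_j^{-1/\alpha}$. Starting from $A$, suppose a finite family of pairwise disjoint generation-$j$ balls $Q$ of radius $\approx\rho_j$ with centres in $A$ has been selected. Inside each $Q$ the balls with $n\in I_{j+1}$ and $\omega_n\in Q$ are, conditionally, i.i.d.\ according to the normalisation of $\restr{\mu}{Q}$, with expected number $\approx\rho_{j+1}^{-1/\alpha}\mu(Q)$; a second-moment (Paley--Zygmund) estimate shows that with overwhelming probability one can select $\approx\rho_{j+1}^{-1/\alpha}\mu(Q)$ of them that are pairwise disjoint, and these become the children of $Q$. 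Assigning to each child an equal share of the mass of $Q$ and taking the weak limit yields a measure $\nu$ supported on $\bigcap_j(\text{generation-}j\text{ balls})$; since every point of the support lies in one ball of the family for each $j$, and these balls are genuine members of $\{\cball{\omega_n}{n^{-\alpha}}\}$ with indices tending to infinity, the support is contained in $E_\alpha$. The independence needed to iterate the second-moment estimate across generations is supplied by the disjointness of the index blocks $I_j$.

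It then remains to verify the Frostman bound $\nu(\oball{x}{r})\leq C'\,r^{1/\alpha-\delta-\varepsilon}$ for $\nu$-a.e.\ $x$ and all small $r$. The scales $r\approx\rho_j$ give the exponent $1/\alpha$, coming from the block sizes $\rho_j^{-1/\alpha}$. The delicate scales are the intermediate ones $\rho_{j+1}<r<\rho_j$: there $\nu$ inherits the local distribution of $\mu$, so that $\nu(\oball{x}{r})\approx\nu(Q)\,\mu(\oball{x}{r})/\mu(Q)$ for the generation-$j$ ball $Q$ containing $x$, and combining the upper bound $\mu(\oball{x}{r})\leq C r^{a}$ with the lower bound $\mu(Q)\geq c\,\rho_j^{b}$ converts the coverage exponent $1/\alpha$ into $1/\alpha-(b-a)\geq 1/\alpha-\delta-\varepsilon$. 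The mass distribution principle then gives the bound on the event $\{\nu\neq 0\}$, which has positive probability, and since $\dimh E_\alpha$ is $\P{\mu}$-almost surely constant the inequality $f_\mu(\alpha)\geq 1/\alpha-\delta-\varepsilon$ follows.

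The main obstacle is exactly this intermediate-scale estimate, which is where the discrepancy $\delta$ is paid. The number of surviving children that can be guaranteed is governed by the \emph{smallest} possible value $\rho_j^{b}$ of $\mu(Q)$, attained at the scales where $\mu$ is most concentrated (those realising $\udim{\mu}{x}$), whereas the spreading of those children within $Q$ is only as good as the \emph{largest} possible value $r^{a}$ of $\mu(\oball{x}{r})$, attained at the scales where $\mu$ is most spread out (those realising $\ldim{\mu}{x}$). Because these two extreme behaviours occur along different, non-aligned sequences of scales, the loss $b-a\approx\delta$ between them cannot be avoided, and the whole difficulty is to quantify the survival probabilities so that a set of branches of full $\nu$-measure simultaneously exhibits the required Hölder behaviour at all scales.
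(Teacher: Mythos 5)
Your strategy is, at the top level, the same as the paper's: pass to a positive-measure set on which $\mu$ has nearly matching two-sided power bounds, grow a random Cantor-type set inside $E_\alpha(\omega)$ out of balls of the sequence, and finish with a Frostman-type bound (the paper uses the equivalent energy criterion, via Lemmas~\ref{Itlemma} and~\ref{treelemma}) together with the zero--one law. The differences are in the execution, and two steps that you gloss over are genuine gaps.

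First, the reduction to $A$ is not as innocent as you state, and your write-up is inconsistent about it. If, as announced, you use only balls with $\omega_n\in A$, then the points available inside a construction ball $Q$ are distributed like the normalisation of $\restr{\mu}{A\cap Q}$, so their expected number is $\#I_{j+1}\,\mu(A\cap Q)$, not $\#I_{j+1}\,\mu(Q)$; the Egorov set $A$ may occupy an arbitrarily small fraction of the $\mu$-mass of small balls centred on $A$, so the lower bound $\mu(\cball{x}{r})\geq c r^{b}$ gives no control of $\mu(A\cap Q)$ and the child counts are unjustified. If instead you admit all points of $Q$ as children centres (as your sentence ``i.i.d.\ according to the normalisation of $\restr{\mu}{Q}$, with expected number $\approx\rho_{j+1}^{-1/\alpha}\mu(Q)$'' suggests), the counts are right for one generation, but the next generation's balls are centred at points where neither $cr^{b}\leq\mu(\cball{x}{r})$ nor $\mu(\cball{x}{r})\leq Cr^{a}$ need hold, and the induction dies. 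The paper resolves exactly this tension: it samples from the normalised restricted measure $\mu_1=\restr{\mu}{A_3}/\mu(A_3)$ (Lemma~\ref{restrlemma} shows this costs only $\alpha\mapsto\alpha+\varepsilon$), proves the upper uniformity bound for $\mu_1$ directly, and recovers the lower bound at $\mu_1$-a.e.\ point via the Lebesgue--Besicovitch differentiation theorem; after that, Lemma~\ref{collectlemma} needs the lower bound only at the scales actually used and at the points actually selected (the sets $G_m$), not uniformly. Some version of this step is unavoidable.

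Second, your claim that ``the scales $r\approx\rho_j$ give the exponent $1/\alpha$'' does not follow from the block sizes, and it fails for any scale sequence fixed in advance with bounded ratios, e.g.\ $\rho_j=\rho^{j}$: a generation-$j$ ball has $\mu(Q)\approx\rho_j^{\,s}$ with $s\geq a>1/\alpha$, so the expected number of block-$I_{j+1}$ points in $Q$ is about $\rho_{j+1}^{-1/\alpha}\mu(Q)\approx\rho^{-1/\alpha}\rho^{\,j(s-1/\alpha)}\to 0$, and the construction has no children to choose from; more generally the telescoped mass $\nu(Q_j)=\prod_{i\leq j}\bigl(\rho_i^{1/\alpha}/\mu(Q_{i-1})\bigr)$ accumulates the factors $1/\mu(Q_{i-1})$ and overshoots $\rho_j^{1/\alpha-\delta-\varepsilon}$ unless those factors are negligible against $\rho_j^{-\varepsilon}$. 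The cure is that the scale ratios must grow without bound: one needs $\rho_{j+1}^{\varepsilon}\leq\mu(Q_j)$, i.e.\ super-exponentially (in the paper, adaptively) decreasing scales. That is precisely the role of the condition $\mu(B)n/(2W^2)\geq r_n^{-1/\alpha+\varepsilon}$ in Lemma~\ref{almosttherelemma}: it makes each child's mass a full power $r_n^{1/\alpha-\varepsilon}$ of its own radius, so no loss accumulates across generations, and the discrepancy $\delta$ is paid only once, at intermediate scales. Relatedly, the $\lesssim$ half of your estimate $\nu(\oball{x}{r})\approx\nu(Q)\mu(\oball{x}{r})/\mu(Q)$ requires an upper bound on the number of selected points in \emph{every} sub-ball of $Q$ simultaneously; Paley--Zygmund gives anti-concentration (counts are not too small), not this. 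The paper proves it as a separate Chernoff-plus-union-bound statement (Lemma~\ref{Wlemma}), which is what yields property~\itemref{propiii} of Lemma~\ref{collectlemma} and feeds the energy estimate. You name this difficulty in your closing paragraph, but naming it is not supplying it.
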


Theorem~\ref{mainthm} will be proved at the end of
Section~\ref{sec:proofmain}.  Preparations needed for the proof are
done in Sections~\mbox{\ref{sec:energy}--\ref{sec:proofmain}}.

If $A \subset \rea$ and $h \colon A \to \rea$, we say that $h$ is
1-Lipschitz continuous if $|h(x) - h(y)| \leq |x - y|$ for all $x, y
\in A$. If $g \colon A \to \rea$, let $\bar{g}$ denote the
\emph{increasing 1-Lipschitz hull} of $g$, defined by
	\[
	\bar{g} (x) = \inf \{ h(x) ; \,
	h \text{ is increasing, 1-Lipschitz continuous, and } h \geq g \}.
	\]
Also, define $\tilde g$ by $\tilde g (x) = x + \sup_{y \geq x} (g(y) - y)$.
Then $\tilde g \leq \bar g$, with equality if $g$ is increasing.

Let
\begin{align*}
  F_\mu (s) &= \dimh \{ x \in \supp \mu ; \, \ldim{\mu}{x}
  \leq s \},\\
  G_\mu (s) &= \lim_{\varepsilon \to 0} \limsup_{r \to 0} \frac{\log
    (N_r (s + \varepsilon) - N_r (s - \varepsilon))}{- \log r},
\end{align*}
where $N_r (s)$ denotes the number of $d$-dimensional cubes of
the form
\[
Q = [k_1 r, (k_1+1) r) \times \ldots \times [k_d r, (k_d+1) r)
\]
with $k_1, \ldots, k_d \in \mathbf{Z}$ and $\mu (Q) \geq r^s$. The
function $F_\mu$ is called the \emph{Hausdorff spectrum} of the lower
local dimension of $\mu$, and $G_\mu$ is called the
\emph{upper coarse spectrum} of $\mu$.

\begin{proposition} \label{pro:main}
For every $\alpha > 0$,
	\[
	\lim_{s \to 1/\alpha^-} F_\mu(s)
	\leq f_\mu(\alpha)
	\leq \max\left(F_\mu(1/\alpha), \, \tilde{G}_\mu (1 / \alpha)\right).
	\]
\end{proposition}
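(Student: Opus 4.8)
The two inequalities are essentially independent, and I would attack the upper one by a direct covering argument and the lower one by the potential-theoretic construction that already underlies Theorem~\ref{mainthm}. Since $F_\mu$ is nondecreasing, $\lim_{s\to1/\alpha^-}F_\mu(s)=\sup_{s<1/\alpha}F_\mu(s)$, so the lower bound reduces to proving $f_\mu(\alpha)\ge t$ whenever $s<1/\alpha$ and $t<F_\mu(s)$. For the upper bound I would first observe that every point of $E_\alpha$ is a limit of centres $\omega_n$, whence $E_\alpha\subseteq\supp\mu$ almost surely, and then split $E_\alpha$ according to whether $\ldim{\mu}{x}\le1/\alpha$ or $\ldim{\mu}{x}>1/\alpha$.

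For the upper bound, the piece $E_\alpha\cap\{x\in\supp\mu:\ldim{\mu}{x}\le1/\alpha\}$ is contained in $\{x\in\supp\mu:\ldim{\mu}{x}\le1/\alpha\}$, so its dimension is at most $F_\mu(1/\alpha)$ deterministically. For the piece where $\ldim{\mu}{x}>1/\alpha$ I would work at dyadic scales $r_j=2^{-\alpha j}$: the balls with $n\in[2^j,2^{j+1})$ have radius $\asymp r_j$ and there are $\asymp r_j^{-1/\alpha}$ of them. Partitioning the grid cubes $Q$ of side $r_j$ into mass bands $\mu(Q)\asymp r_j^{y}$ and applying the union bound, the expected number of band-$y$ cubes meeting one of these balls is at most $(\text{number of band-}y\text{ cubes})\times r_j^{-1/\alpha}r_j^{y}\approx r_j^{-(G_\mu(y)-(y-1/\alpha))}$, since by definition of $G_\mu$ there are $\approx r_j^{-G_\mu(y)}$ cubes with $\mu(Q)\asymp r_j^{y}$ (and the defining $\limsup$ makes this an upper bound at all small scales). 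A point with $\ldim{\mu}{x}>1/\alpha$ sits at every small scale in a cube of mass $\le r_j^{y}$ for some $y>1/\alpha$, and being covered infinitely often it lies in the $\limsup$ of the hit cubes. Covering that $\limsup$ by its tail and taking expectations, the resulting $t$-sum is finite once $t>\sup_{y\ge1/\alpha}(G_\mu(y)-(y-1/\alpha))=\tilde G_\mu(1/\alpha)$; a first-moment/Borel--Cantelli argument then gives $\hmeas^t=0$ almost surely. Running over a countable grid of thresholds $y\downarrow1/\alpha$ and rational $t$ yields $\dimh E_\alpha\le\max(F_\mu(1/\alpha),\tilde G_\mu(1/\alpha))$.

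For the lower bound, fix $s<1/\alpha$ and $t<F_\mu(s)$ and choose, by Frostman's lemma, a compact $K\subseteq\{x\in\supp\mu:\ldim{\mu}{x}\le s\}$ carrying a probability measure $\lambda$ with $\lambda(\cball{x}{\rho})\le C\rho^{t}$. Because $\ldim{\mu}{x}\le s$ is a $\liminf$ condition, at $\lambda$-typical $x$ one has $\mu(\cball{x}{r})\ge r^{s'}$ with $s'<1/\alpha$ along a sequence of scales $r\to0$, so along these recurring fat scales the random balls accumulate near $x$ in abundance. I would then carry out the construction behind Theorem~\ref{mainthm}: from the successful balls at these fat scales, build a random subset of $E_\alpha$ supporting a measure $m$ modelled on $\lambda$, and show that $\iint|x-y|^{-t'}\,\mathrm{d}m(x)\,\mathrm{d}m(y)<\infty$ almost surely for every $t'<t$. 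The energy form of Frostman's theorem then gives $\dimh E_\alpha\ge\dimh\supp m\ge t'$, and letting $t'\uparrow t$, $t\uparrow F_\mu(s)$, $s\uparrow1/\alpha$ finishes. The crucial subtlety is that one does \emph{not} cover the fat points themselves infinitely often (a large discrepancy $\udim{\mu}{x}-\ldim{\mu}{x}$ can prevent this), but instead manufactures new points of $E_\alpha$ out of the balls nucleated at the fat scales; this is precisely why only $\ldim{\mu}{x}$, and not $\udim{\mu}{x}$, enters $F_\mu$.

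I expect the lower bound to be the main obstacle: controlling the second moment of $m$ so that its $t'$-energy stays finite. The difficulty is again the local dimension discrepancy—where $\udim{\mu}{x}$ greatly exceeds $\ldim{\mu}{x}$ the fat scales are sparse and the successful balls are strongly correlated, so the energy estimate cannot be done scale-by-scale but must be organised along the recurring fat scales, and it is here that the preparations of Sections~\ref{sec:energy}--\ref{sec:proofmain} carry the weight. By contrast the upper bound is comparatively routine once the band decomposition is fixed, the only real care being the passage from the single-scale first-moment estimate to the $\limsup$ via Borel--Cantelli.
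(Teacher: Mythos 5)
Your upper bound is essentially the paper's own argument (Proposition~\ref{pro:ubound1}): the same decomposition of $E_\alpha(\omega)$ into the piece where $\ldim{\mu}{x}\le 1/\alpha$, bounded deterministically by $F_\mu(1/\alpha)$, and mass bands treated at matched dyadic scales; the same first-moment count of hit cubes against the coarse spectrum, followed by Markov's inequality and Borel--Cantelli; and a covering of the limsup set by tails. That half is fine.

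The lower bound, however, rests on a false premise, and the route you choose because of it would not prove the stated inequality. You claim that a point $x$ with $\ldim{\mu}{x}\le s<1/\alpha$ need not lie in $E_\alpha(\omega)$ almost surely, and that ``a large discrepancy $\udim{\mu}{x}-\ldim{\mu}{x}$ can prevent this''. In fact the opposite holds, and this is the crux of the paper's lower bound (Corollary~\ref{cor:lowerbound}): for a \emph{fixed} such $x$, the events $\{x\in\cball{\omega_n}{n^{-\alpha}}\}$, $n = 1, 2, \ldots$, are independent with probabilities $\mu(\cball{x}{n^{-\alpha}})$. Since $\ldim{\mu}{x}<1/\alpha$ there is a sequence $r_k\to 0$ with $\mu(\oball{x}{r_k})\ge C r_k^{1/\alpha}$, and the block of indices $n\le r_k^{-1/\alpha}$ (of cardinality $\asymp r_k^{-1/\alpha}$ after passing to a rapidly decreasing subsequence) contributes at least a fixed positive amount to $\sum_n \mu(\cball{x}{n^{-\alpha}})$, because each such $n$ satisfies $\mu(\cball{x}{n^{-\alpha}})\ge \mu(\oball{x}{r_k})\ge Cr_k^{1/\alpha}$. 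The sum therefore diverges, and the second Borel--Cantelli lemma gives $x\in E_\alpha(\omega)$ for $\P{\mu}$-a.e.~$\omega$, no matter how large $\udim{\mu}{x}$ is. Divergence needs only that fat scales recur, which is exactly the liminf condition defining $\ldimm{\mu}$; this, not your ``manufacturing new points'' picture, is why only the lower local dimension enters $F_\mu$. (What is true is that one cannot conclude that almost surely \emph{all} such $x$ are covered, since null sets accumulate over uncountably many $x$; but that is repaired by Fubini, not by a new construction, and has nothing to do with the discrepancy.) Given this fact the proof closes softly, as in Lemma~\ref{lem:souslin}: with your Frostman measure $\lambda$ on $K\subset\{x\in\supp\mu;\,\ldim{\mu}{x}\le s\}$, Fubini gives $\E\left(\lambda(E_\alpha(\omega))\right)=\int \P{\mu}\left(x\in E_\alpha(\omega)\right)\intd\lambda(x)=1$, hence $\lambda(E_\alpha(\omega))=1$ a.s., and the mass distribution principle yields $\dimh E_\alpha(\omega)\ge t$ a.s.

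By contrast, the tree-and-energy construction behind Theorem~\ref{mainthm}, which you propose to adapt, cannot give the clean bound $F_\mu(s)\le f_\mu(\alpha)$. That construction requires $\mu(\cball{x}{r})\ge r^{u+\varepsilon}$ at \emph{all} sufficiently small scales $r$ (property~\itemref{propiv} of Lemma~\ref{collectlemma}, with $u\ge\udimp\mu$), so that the tree branches at a definite geometric rate at every generation; that is an upper-local-dimension hypothesis, and it is precisely what produces the discrepancy penalty $\delta$ in Theorem~\ref{mainthm}. With only recurring fat scales, consecutive branchings of your tree would be separated by uncontrolled scale ratios, and your sketch offers no mechanism to handle this --- you acknowledge as much when you say the energy estimate ``cannot be done scale-by-scale''. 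So even if completed, your route would deliver a bound degraded by the discrepancy, which is exactly what the left-hand inequality of Proposition~\ref{pro:main} must avoid.
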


Proposition~\ref{pro:main} will be proved in
Section~\ref{sec:proofprop}. With a proof similar to that of
Proposition~\ref{pro:main}, we can also show the following alternative
upper bound. The \emph{packing dimension} of a set $A \subset \rea^d$
can be expressed as
\begin{equation} \label{dimpeq}
  \dimp A = \inf\left\{t; \, \exists \{A_i\}_{i = 1}^\infty
  \text{ s.t.~}
  \udimb A_i \leq t \text{ for all } i
  \text{ and }
  A \subset \bigcup_{i = 1}^\infty A_i
  \right\}.
\end{equation}
The \emph{packing spectrum} of the lower local dimension of $\mu$ is
defined by
\[
H_\mu (s) = \dimp \{ x ;\, \ldim{\mu}{x} \leq s \}.
\]

\begin{proposition} \label{pro:alternative}
  For every $\alpha > 0$, it holds that $f_\mu(\alpha) \leq \bar{H}_\mu (1 /
  \alpha)$, where $\bar{H}_\mu$ denotes the increasing 1-Lipschitz
  hull of $H_\mu$.
\end{proposition}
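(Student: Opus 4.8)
The plan is to bound $\dimh E_\alpha$ from above almost surely by decomposing $\supp\mu$ according to the value of the lower local dimension and applying a first-moment estimate on each piece, exploiting that $\dimp$ is computed through upper box dimensions of countable covers as in \eqref{dimpeq}. Since $\{\ldim{\mu}{x} \le s\}$ grows with $s$, the function $H_\mu$ is non-decreasing, so $\bar H_\mu = \tilde H_\mu$ and hence $\bar H_\mu(1/\alpha) = \sup_{y \ge 1/\alpha}\bigl(H_\mu(y) - (y - 1/\alpha)\bigr)$; it is this expression that I aim to match. Throughout I use that almost surely $E_\alpha \subset \supp\mu$, since $\mu(\supp\mu) = 1$ and $\supp\mu$ is closed.

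The engine is the following atomic estimate, proved exactly as the upper bound in Proposition~\ref{pro:main}. Let $A \subset \rea^d$ be bounded with $\udimb A \le \beta$ and suppose there is a constant $m$ with $\mu(\oball{x}{r}) \le m r^y$ for all $x \in A$ and all $r > 0$. Covering $A$ by $\lesssim r^{-\beta - \varepsilon}$ balls of radius $r$ and applying the measure bound to each gives $\mu(A_r) \lesssim r^{y - \beta - \varepsilon}$ for the $r$-neighbourhood $A_r$. Covering $E_\alpha \cap A$ by the tail family $\{\cball{\omega_n}{n^{-\alpha}}\}_{n \ge N}$ and taking expectations then yields, for $s > \beta - (y - 1/\alpha)$ and $\varepsilon$ small,
\[
\E\bigl[\hmeas^s(E_\alpha \cap A)\bigr] \le \liminf_{N \to \infty} \sum_{n \ge N} (2 n^{-\alpha})^s \, \mu(A_{n^{-\alpha}}) \lesssim \liminf_{N \to \infty}\sum_{n \ge N} n^{-\alpha(s + y - \beta - \varepsilon)} = 0,
\]
the series converging precisely because $s + y - \beta - \varepsilon > 1/\alpha$. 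Letting $s \downarrow \beta - (y - 1/\alpha)$ gives $\dimh(E_\alpha \cap A) \le \max\bigl(0, \beta - (y - 1/\alpha)\bigr)$ almost surely.

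To globalise, fix $\varepsilon > 0$, put $y_j = 1/\alpha + j\varepsilon$, and split $\supp\mu$ into $D_0 = \{\ldim{\mu}{x} \le 1/\alpha\}$, the levels $D_j = \{y_{j-1} < \ldim{\mu}{x} \le y_j\}$ for $1 \le j \le K$ with $y_K \ge d + 1/\alpha$, and the tail $\{\ldim{\mu}{x} > y_K\}$. On $D_j$ every point satisfies $\ldim{\mu}{x} > y_{j-1}$, which forces a pointwise bound $\mu(\oball{x}{r}) \le c_x r^{y_{j-1}}$, while at the same time $D_j \subset \{\ldim{\mu}{x} \le y_j\}$ has $\dimp D_j \le H_\mu(y_j)$. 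I would therefore first apply \eqref{dimpeq} to write $D_j$ as a countable union of bounded sets of upper box dimension at most $H_\mu(y_j) + \varepsilon$, and then subdivide each of these further according to the size of $c_x$ so as to make the measure bound uniform with exponent $y = y_{j-1}$. The atomic estimate applied to each resulting piece gives, almost surely and using $y_{j-1} = y_j - \varepsilon$,
\[
\dimh(E_\alpha \cap \text{piece}) \le H_\mu(y_j) + \varepsilon - (y_{j-1} - 1/\alpha) = \bigl(H_\mu(y_j) - (y_j - 1/\alpha)\bigr) + 2\varepsilon \le \bar H_\mu(1/\alpha) + 2\varepsilon.
\]
The set $D_0$ contributes $\dimh(E_\alpha \cap D_0) \le \dimp D_0 = H_\mu(1/\alpha) \le \bar H_\mu(1/\alpha)$ with no appeal to randomness, and on the tail the exponent $y_K \ge d + 1/\alpha$ makes the reduction exceed the ambient dimension, so that bounded pieces there are almost surely of dimension $0$.

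Since only countably many pieces occur, intersecting the corresponding almost-sure events and invoking the countable stability of Hausdorff dimension gives $\dimh E_\alpha \le \bar H_\mu(1/\alpha) + 2\varepsilon$ almost surely; letting $\varepsilon \to 0$ along a sequence finishes the proof. The main obstacle is the bookkeeping of this two-parameter decomposition: the uniform measure bound must be harvested from the lower endpoint $y_{j-1}$ of each level while the packing dimension is read off at the upper endpoint $y_j$, and it is precisely the gap $y_j - y_{j-1} = \varepsilon$ together with the convergence condition $s + y - \beta > 1/\alpha$ that produces the correction $-(y - 1/\alpha)$ and, after taking the supremum over levels, the increasing $1$-Lipschitz hull $\bar H_\mu$ rather than $H_\mu$ itself. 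The point requiring care is that the box-dimension subdivision coming from \eqref{dimpeq} must not destroy the uniform measure bound; it does not, since that bound is inherited by arbitrary subsets.
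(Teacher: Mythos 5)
Your proof is correct and takes essentially the same approach as the paper: the same decomposition into level sets of $\ldimm{\mu}$ of width $\varepsilon$, refined by the uniform Frostman constant and by \eqref{dimpeq} into pieces of controlled upper box dimension, followed by the same fattening estimate (this is exactly the paper's Lemma~\ref{udimblemma}) and a first-moment bound on the balls $\cball{\omega_n}{n^{-\alpha}}$ that hit each piece. The only real difference is the final step---you bound $\E\bigl[\hmeas^s(E_\alpha\cap A)\bigr]$ directly by the expected sum of diameters and invoke Fatou, whereas the paper applies Markov and Borel--Cantelli to the counting function of hitting times and then uses the trivial covering bound; the two computations are equivalent, and your explicit treatment of the tail $\{\ldimm{\mu} > y_K\}$ (including points of infinite lower local dimension) is if anything slightly more careful than the paper's decomposition.
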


The proof is in Section~\ref{sec:alternative}. It is possible to have
a measure for which $F_\mu = H_\mu$, see for instance Example~\ref{example} below.

Our bounds for $f_\mu$ simplify if some
assumptions are made about $\mu$. If $\delta_\mu = 0$ almost
everywhere then $\delta = 0$ in Theorem~\ref{mainthm},
so that $f_\mu(\alpha) \geq 1 / \alpha = \bar F_\mu(1 / \alpha)$ for
$1 / \alpha \leq \udimh \mu$. If $F_\mu$ is
concave then $\bar F_\mu(s) = F_\mu(s)$ for
$s \geq \ldimh \mu$, using that the point $(\ldimh \mu, \ldimh \mu)$
lies in the closure of the graph of $F_\mu$. Both of these
conditions are often satisfied if $\mu$ is a Gibbs measure, for
instance in the setting of Seuret's paper.
Moreover, in that case it is well known that
	\[
	G_\mu(s) = F_\mu(s) = \Psi_\mu(s) := \dimh \{x; \, \ldim{\mu}{x} = s \}
	\]
for $s \leq s_*$, where $s_*$ is the value of $s$ that
maximises $\Psi_\mu(s)$,
and $F_\mu(s) \geq G_\mu(s)$ for $s \geq s_*$ (see~\cite[Proposition 1 and~2]{Seuret}).
Thus in this case, the upper bound in Proposition~\ref{pro:main} is equal
to $\bar F_\mu$, and
	\[
	f_\mu(\alpha) = \bar F_\mu(1 / \alpha)
	\]
for all $\alpha > 0$, that is, our bounds
specialise to Seuret's result (see Figure~\ref{gibbsfig}).

  \begin{figure}[h!]
    \begin{tikzpicture}[scale=3]
      \draw[->,>=latex] (0,0)--(0,1.1) node[right] {$y$};
      \draw[->,>=latex] (0,0)--(1.1,0) node[right] {$s$};
      \node at (0.7,0.6) [label=0:{$y = F_\mu(s)$}] {};
      
      \draw [dashed] (0,0)--(1.0,1.0);

      \draw[blue, thick]
      plot[smooth,domain=0.15:0.6,samples=100]
      ({\x},{0.4 + (\x-0.4) * (0.8-\x) / (0.8-0.4)});
      \draw[blue, thick] (0.6,0.5)--(1,0.5);

      \draw[black]
      plot[smooth,domain=0.6:1.05,samples=100]
      ({\x},{0.4 + (\x-0.4) * (0.8-\x) / (0.8-0.4)});

      \draw [dashed] (0.4,0)--(0.4,0.4);
      \draw (0.4, -0.02)--(0.4, 0.02);
      \node at (0.4,0) [label=-90:{$\dimh \mu$}] {};

    \end{tikzpicture}
    \qquad \qquad
    \begin{tikzpicture}[scale=3]
      \draw[->,>=latex] (0,0)--(0,1.1) node[right] {$y$};
      \draw[->,>=latex] (0,0)--(1.1,0) node[right] {$1/\alpha$};
      \node at (0.7,0.6) [label=0:{$y = f_\mu(\alpha)$}] {};

      \draw [dashed] (0,0)--(1.0,1.0);

      \draw[black]
      plot[smooth,domain=0.15:1.05,samples=100]
      ({\x},{0.4 + (\x-0.4) * (0.8-\x) / (0.8-0.4)});

      \draw[red,thick]
      plot[smooth,domain=0.4:0.6,samples=100]
      ({\x},{0.4 + (\x-0.4) * (0.8-\x) / (0.8-0.4)});
      \draw[red, thick] (0,0)--(0.4,0.4);
      \draw[red, thick] (0.6,0.5)--(1,0.5);

      \draw [dashed] (0.4,0)--(0.4,0.4);
      \draw (0.4, -0.02)--(0.4, 0.02);
      \node at (0.4,0) [label=-90:{$\dimh \mu$}] {};
           
    \end{tikzpicture}
    \caption{The graphs of $F_\mu$ (blue), $f_\mu$ (red) and
      $\Psi_\mu$ (black).}\label{gibbsfig}
  \end{figure}
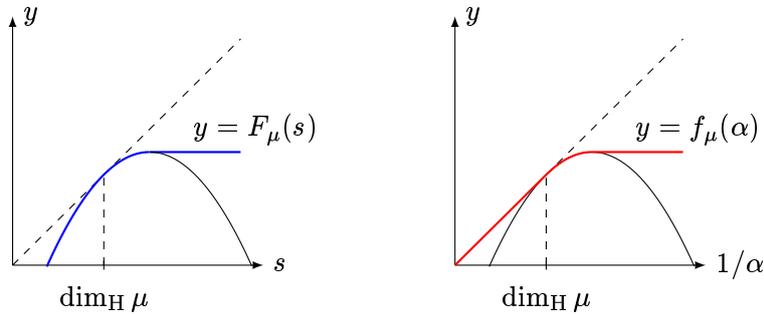

The arguments above give the following corollary.

\begin{corollary}
  Suppose that $\mu$ is a measure such that $F_\mu$ is concave on the
  interval on which it is positive, and that the local dimension
  discrepancy $\delta_\mu$ vanishes almost everywhere. If either
  $F_\mu(s) \geq \tilde{G}_\mu(s)$ or $F_\mu(s) = H_\mu(s)$
for $s \geq \udimh \mu$, then for every $\alpha > 0$,
  \[
  f_\mu (\alpha) = \bar F_\mu (1/\alpha).
  \]
\end{corollary}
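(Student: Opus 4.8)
The plan is to write $s = 1/\alpha$ and to treat the two regimes $s \le \udimh\mu$ and $s > \udimh\mu$ separately, absorbing the junction point $s = \udimh\mu$ into the first regime. For $s \le \udimh\mu$ everything has essentially been assembled in the discussion preceding the statement: since $\delta_\mu = 0$ almost everywhere, the quantity $\delta$ in Theorem~\ref{mainthm} vanishes, so the theorem gives $f_\mu(\alpha) \ge s$, while $f_\mu(\alpha) \le 1/\alpha = s$ holds for every $\alpha$ as noted in Section~\ref{sec:proofmain}. The only supporting fact I would verify here is that $\bar F_\mu(s) = s$ on this range: the set $\{x : \ldim{\mu}{x} \le \udimh\mu\}$ has full $\mu$-measure, so $F_\mu(\udimh\mu) \ge \udimh\mu$, and since $F_\mu(s) \le s$ for all $s$ by the mass distribution principle, in fact $F_\mu(\udimh\mu) = \udimh\mu$; as $F_\mu$ is increasing we have $\bar F_\mu = \tilde F_\mu$, and $\tilde F_\mu(s) = s + \sup_{t \ge s}(F_\mu(t) - t) = s$ because the supremum is nonpositive and is attained (value $0$) at $t = \udimh\mu \ge s$. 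Hence $f_\mu(\alpha) = s = \bar F_\mu(s)$ throughout $s \le \udimh\mu$.

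For $s > \udimh\mu$ I would first record that $\bar F_\mu(s) = F_\mu(s)$, which is the concavity input. Since $F_\mu$ lies below the diagonal and touches it at $\udimh\mu$, concavity forces the right derivative of $F_\mu$ at $\udimh\mu$ to be at most $1$, so $t \mapsto F_\mu(t) - t$ is nonincreasing for $t \ge \udimh\mu$; therefore the supremum defining $\tilde F_\mu(s) = \bar F_\mu(s)$ is attained at $t = s$ and equals $F_\mu(s) - s$, giving $\bar F_\mu(s) = F_\mu(s)$. The lower bound is then immediate from Proposition~\ref{pro:main}: a concave function is continuous on the interior of the interval on which it is positive, so $\lim_{t \to s^-} F_\mu(t) = F_\mu(s)$ and thus $f_\mu(\alpha) \ge F_\mu(s) = \bar F_\mu(s)$.

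It remains to match the upper bound to $F_\mu(s)$ under either hypothesis, and this bookkeeping with the two spectra is where the real work lies. Under the first alternative $F_\mu(s) \ge \tilde G_\mu(s)$ for $s \ge \udimh\mu$, so the right-hand side of Proposition~\ref{pro:main} collapses: $\max(F_\mu(s), \tilde G_\mu(s)) = F_\mu(s)$, and we are done. Under the second alternative I would instead invoke Proposition~\ref{pro:alternative}, $f_\mu(\alpha) \le \bar H_\mu(s)$, and show $\bar H_\mu(s) = F_\mu(s)$ for $s \ge \udimh\mu$. Since $H_\mu$ is increasing, $\bar H_\mu(s) = s + \sup_{t \ge s}(H_\mu(t) - t)$, and because $F_\mu = H_\mu$ on $[\udimh\mu, \infty)$ every $t \ge s \ge \udimh\mu$ satisfies $H_\mu(t) = F_\mu(t)$, so this supremum coincides with $\sup_{t \ge s}(F_\mu(t) - t) = F_\mu(s) - s$ computed above, whence $\bar H_\mu(s) = F_\mu(s)$. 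The main obstacle is precisely this manipulation of the increasing $1$-Lipschitz hulls: one must check that the hull of the packing spectrum really does reduce to $F_\mu$ on the relevant half-line, for which the concavity of $F_\mu$ — ensuring that the supremum defining the hull is attained at its left endpoint — is essential. The only remaining point needing care is the left-continuity of $F_\mu$ at points $s > \udimh\mu$ used in the lower bound, which is again supplied by concavity.
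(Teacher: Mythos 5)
Your proposal is correct and takes essentially the same route as the paper's own justification (the discussion preceding the corollary): Theorem~\ref{mainthm} with $\delta = 0$ together with the trivial bound $f_\mu(\alpha) \leq 1/\alpha$ handles $1/\alpha \leq \udimh \mu$, and concavity identifies $\bar F_\mu$ (and, in the second alternative, $\bar H_\mu$) with $F_\mu$ above $\udimh \mu$, so that Propositions~\ref{pro:main} and~\ref{pro:alternative} give matching bounds. The details you supply---$F_\mu(\udimh \mu) = \udimh \mu$, the hull computations via $\tilde F_\mu$, and the left-continuity of $F_\mu$ from concavity---are precisely the facts the paper leaves implicit.
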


\subsection{An example}

One might conjecture that it always holds that $f_\mu(\alpha) = \bar
F_\mu(1 / \alpha)$.  The following example describes a situation where
this holds, and where $\bar F_\mu$ is not concave.

\begin{example} \label{example}
  Take $\lambda \in (0, 1 / 3)$ and let $\nu$ be the Bernoulli
  convolution with parameter $\lambda$. Let $C$ be the ternary Cantor
  set, and for $k = 0, 1, \ldots$ let $C_k$ be the set in the $k$:th
  step of the standard construction of $C$---thus $C_k$ is a union of
  $2^k$ intervals of length $3^{-k}$.  For each $k$, let $\mu_k$ be
  the sum of $2^k$ affine images of $\nu$, located in the middle
  ninths of the intervals of $C_k$. Equivalently, each component of
  $\mu_k$ is concentrated in the middle third of one of the gaps that
  appear when $C_{k + 1}$ is constructed from $C_k$.  The total mass
  of $\mu_k$ is $2^k$.

  Let $\beta > 1$ and define the probability measure
  \[
  \mu = \frac{\sum_{k = 0}^\infty 2^{-\beta k} \mu_k}{\sum_{k =
      0}^\infty 2^{(1 - \beta)k}} = \left(1 - 2^{1 - \beta}\right)
  \sum_{k = 0}^\infty 2^{-\beta k} \mu_k.
  \]
  It will be shown that the lower and upper local dimensions of $\mu$
  agree everywhere, and $F_\mu$ will be computed. The support of $\mu$
  is
  \[
  \supp \mu = C \cup \left( \bigcup_{k = 0}^\infty \supp \mu_k \right).
  \]
  If $x \in \supp \mu_k$ for some $k$ then
  \[
  \ldim{\mu}{x} = \udim{\mu}{x} = \frac{\log 2}{- \log \lambda} =: s_0
  \]
  (this is the dimension of the Bernoulli convolution), and if $x \notin
  \supp \mu$ then $\ldim{\mu}{x} = \udim{\mu}{x} = \infty$.

  Consider some $x \in C$ and $r \in (0, 1)$, and let $n$ be the
  unique integer such that $3^{-(n + 1)} \leq r < 3^{-n}$. Then $[x -
    r, x + r]$ includes the interval of $C_{n + 1}$ that contains $x$,
  and thus $\mu_{n + l}([x - r, x + r]) \geq 2^{l - 1}$ for $l = 1, 2,
  \ldots$.  It follows that
  \[
  \mu ([x - r, x + r]) \geq \left(1 - 2^{1 - \beta}\right) \sum_{l =
    1}^\infty 2^{-\beta(n + l)} 2^{l - 1} = 2^{- \beta(n + 1)}.
  \]
  On the other hand,
  \[
    [x - r, x + r] \cap \supp \mu_k = \emptyset \qquad\text{for } k
    \leq n - 2,
  \]
  and $[x - r, x + r]$ intersects just one interval of $C_{n - 1}$ so
  that $\mu_{n - 1 + l} \leq 2^l$ for $l = 0, 1, \ldots$. It follows
  that
  \[
  \mu ([x - r, x + r]) \leq \left(1 - 2^{1 - \beta}\right) \sum_{l =
    0}^\infty 2^{-\beta(n - 1 + l)} 2^l = 2^{- \beta(n - 1)}.
  \]
  These bounds for the measure of $[x - r, x + r]$ show that
  \[
  \ldim{\mu}{x} = \udim{\mu}{x} = \frac{\beta \log 2}{\log 3} =: s_1
  \]
  for $x \in C$.

  In summary,
  \[
	\ldim{\mu}{x} =
	\begin{cases}
	s_0 & \text{if } x\in \bigcup_{k = 1}^\infty \supp \mu_k \\
	s_1 & \text{if } x \in C \\
	\infty & \text{otherwise},
	\end{cases}
  \]
  which implies that
  \[
	F_\mu(s) =
	\begin{cases}
	0	& \text{if } s < s_0 \\
	s_0	& \text{if } s_0 \leq s < s_1 \\
	d_1	&\text{if } s_1 \leq s,
	\end{cases}
  \]
  where $d_1 = \log 2 / \log 3$. The graphs of $F_\mu$ and $\bar
  F_\mu$ are shown in the Figure~\ref{fig:example}.

  \begin{figure}[ht!]
    \begin{tikzpicture}[scale=3]
      \draw[->,>=latex] (0,0)--(0,1.1) node[right] {$y$};
      \draw[->,>=latex] (0,0)--(1.1,0) node[right] {$s$};
      \node at (0.7,0.5) [label=0:{$y = F_\mu(s)$}] {};

      \draw (0.38685, -0.02)--(0.38685, 0.02);
      \node at (0.38685,0) [label=-90:{$s_0$}] {};
      
      \draw (0.82021, -0.02)--(0.82021, 0.02);
      \node at (0.82021, 0) [label=-90:{$s_1$}] {};
      
      \draw (-0.02, 0.38685)--(0.02, 0.38685);
      \node at (0, 0.38685) [label=180:{$s_0$}] {};

      \draw (-0.02, 0.63093)--(0.02, 0.63093);
      \node at (0, 0.63093) [label=180:{$d_1$}] {};

      \draw [blue, thick] (0, 0)--(0.38685, 0);
      \node at (0.38685, 0) [circle,draw,white,fill,inner sep=0.3mm] {};
      \node at (0.38685, 0) [circle,draw,blue,inner sep=0.3mm] {};
      
      \draw [blue, thick] (0.38685, 0.38685)--(0.82021, 0.38685);
      \node at (0.82021, 0.38685) [circle,draw,white,fill,inner sep=0.3mm,] {};
      \node at (0.82021, 0.38685) [circle,draw,blue,inner sep=0.3mm,] {};
      \node at (0.38685, 0.38685) [circle,draw,blue,fill,inner sep=0.3mm] {};

      \draw [blue, thick] (0.82021, 0.63093)--(1.1, 0.63093);
      \node at (0.82021,0.63093) [circle,draw,blue,fill,inner sep=0.3mm] {};

      \draw [dashed] (0,0)--(1.0,1.0);
    \end{tikzpicture}
    \qquad \qquad
    \begin{tikzpicture}[scale=3]
      \draw[->,>=latex] (0,0)--(0,1.1) node[right] {$y$};
      \draw[->,>=latex] (0,0)--(1.1,0) node[right] {$s$};
      \node at (0.7,0.5) [label=0:{$y = \bar{F}_\mu(s)$}] {};

      \draw (0.38685, -0.02)--(0.38685, 0.02);
      \node at (0.38685,0) [label=-90:{$s_0$}] {};
      
      \draw (0.82021, -0.02)--(0.82021, 0.02);
      \node at (0.82021, 0) [label=-90:{$s_1$}] {};
      
      \draw (-0.02, 0.38685)--(0.02, 0.38685);
      \node at (0, 0.38685) [label=180:{$s_0$}] {};
      
      \draw (-0.02, 0.63093)--(0.02, 0.63093);
      \node at (0, 0.63093) [label=180:{$d_1$}] {};

      \draw [blue, thick] (0, 0)--(0.38685, 0);
      \node at (0.38685, 0) [circle,draw,white,fill,inner sep=0.3mm] {};
      \node at (0.38685, 0) [circle,draw,blue,inner sep=0.3mm] {};

      \draw [blue, thick] (0.38685, 0.38685)--(0.82021, 0.38685);
      \node at (0.82021, 0.38685) [circle,draw,white,fill,inner sep=0.3mm,] {};
      \node at (0.82021, 0.38685) [circle,draw,blue,inner sep=0.3mm,] {};
      \node at (0.38685, 0.38685) [circle,draw,blue,fill,inner sep=0.3mm] {};

      \draw [blue, thick] (0.82021, 0.63093)--(1.1, 0.63093);
      \node at (0.82021,0.63093) [circle,draw,blue,fill,inner sep=0.3mm] {};
      
      \draw [dashed] (0,0)--(1.0,1.0);
      
      \draw [red, thick] (0,0)--(0.38685, 0.38685)--(0.57613, 0.38685)--(0.82021, 0.63093)-- (1.1, 0.63093);
    \end{tikzpicture}
    \caption{The graphs of $F_\mu$ (blue) and $\bar F_\mu$ (red).} \label{fig:example}
  \end{figure}
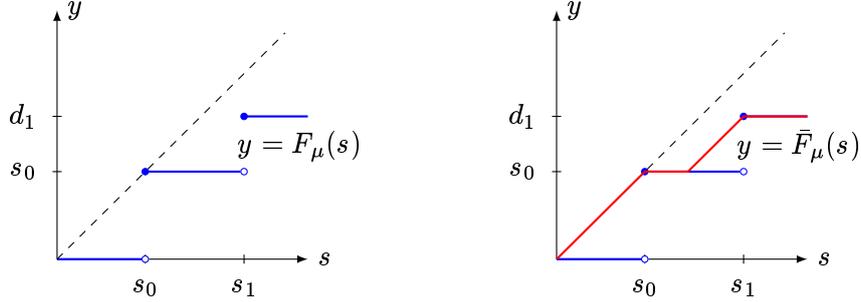

  Since the lower and upper local dimensions of $\mu$ agree
  everywhere, the discrepancy of $\mu$ is $0$, and thus
  Theorem~\ref{mainthm} implies that $f_\mu(\alpha) = \bar F_\mu(1 /
  \alpha)$ for $1 / \alpha \leq s_0$. Proposition \ref{pro:main} and ~\ref{pro:alternative}
  show that $F_\mu(1 / \alpha-) \leq f_\mu(\alpha) \leq \bar F_\mu(1
  / \alpha)$, using that $F_\mu = H_\mu$ for the measure
  $\mu$, since the sub-level sets of $\ldimm{\mu}$ are regular Cantor sets
and thus have equal Hausdorff and packing dimension.
It remains to show that
  \[
  f_\mu(\alpha) \geq \frac{1}{\alpha} - (s_1 - d_1) = \frac{1}{\alpha}
  - (\beta - 1) \frac{\log 2}{\log 3}
  \]
  for $1 / \alpha \leq s_1$. This will be done in
  Section~\ref{sec:example}.
\end{example}

\subsection{Some conventions, definitions and facts} \label{ssec:notation}
By the natural numbers we mean the set $\{1, 2, 3, \ldots\}$,
and sequences will generally be indexed starting from $1$.
All measures on topological spaces below are Borel measures,
but this will not be explicitly stated each time. Thus
``measure'' below should be interpreted as ``Borel measure''
whenever such an interpretation makes sense.

The open and closed balls with centre $x$ and radius $r$ are
denoted by $\oball{x}{r}$ and $\cball{x}{r}$ respectively.
If $B$ is a ball (open or closed), then $x(B)$ denotes its
centre and $r(B)$ its radius.

Let $\mu$ be a measure on $\rea^d$.
The \emph{upper Hausdorff dimension} and \emph{upper packing dimension} of
$\mu$ are defined by
	\begin{align*}
	\udimh \mu &= \esssup_{x \sim \mu} \ldim{\mu}{x}, \qquad\text{and} \\
	\udimp \mu &= \esssup_{x \sim \mu} \udim{\mu}{x},
	\end{align*}
respectively. If $A$ is a set of full $\mu$-measure, then
$\dimh A \geq \udimh \mu$. The measure $\mu$ is \emph{$(c, s)$-uniform} if
$\mu$-a.e.~$x$ is such that
	\[
	\mu(\cball{x}{r}) \leq cr^s
	\qquad\text{for every } r > 0,
	\]
and $\mu$ is \emph{$(c, s)'$-uniform} if $\mu$-a.e.~$x$ is such that
	\[
	\mu\left(\cball{x}{r} \setminus \{x\}\right) \leq cr^s
	\qquad\text{for every } r > 0.
	\]
If $\mu$ is $(c, s)$-uniform then every Borel set $A$ satisfies
$\mu(A) \leq c |A|^s$, where $|A|$ denotes the diameter of $A$.
Any non-atomic measure that is $(c, s)'$-uniform is also
$(c, s)$-uniform.  We will use two variants of the $t$-energy of
$\mu$, namely,
\begin{align*}
  I_t(\mu) &= \iint |x - y|^{-t} \intd\mu(x)\intd\mu(y), \qquad\text{and}\\
  I'_t(\mu) &= \iint_{x \neq y} |x - y|^{-t} \intd\mu(x)\intd\mu(y).
\end{align*}
For non-atomic measures they are the same, but $I'_t$ gives some
information about discrete measures as well. It is well known that
if $I_t(\mu) < 0$ then every set of positive $\mu$-measure has
Hausdorff dimension greater than or equal to $t$.

\section{A lemma about the energy integral}	\label{sec:energy}
If $\mu$ is a $\sigma$-finite measure on $\rea^d$ and $\varphi$ is
a non-negative measurable function on $\rea^d$, then
	\[
	\int \varphi \intd\mu = \int_0^\infty \mu\left\{
	y \in \rea^d; \, \varphi(y) \geq z
	\right\} \intd z,
	\]
since by Fubini's theorem both sides are equal to the $\mu \times \leb$-measure
of the set $\{(y, z) \in \rea^d \times [0, \infty]; \, z \leq \varphi(y) \}$.
In particular, for $t > 0$ and $x$ in $\rea^d$,
	\begin{equation} \label{poteq}
	\int_{x \neq y} |x - y|^{-t} \intd\mu(y) =
	\int_0^\infty \mu\left(\cball{x}{z^{-1/t}} \setminus \{x\} \right)\intd z.
	\end{equation}

\begin{lemma} \label{Itlemma}
Let $\mu$ be a $(c, s)'$-uniform measure on $\rea^d$ and let $t \in (0, s)$.
Then for every Borel subset $A$ of\/ $\rea^d$,
	\[
	I'_t\left(\restr{\mu}{A}\right) \leq \frac{cs}{s - t}|A|^{s - t}\mu(A).
	\]

\begin{proof}
Applying~\eqref{poteq} to $\restr{\mu}{A}$ gives for $\restr{\mu}{A}$-a.e.~$x$,
	\begin{align*}
	\int_{x \neq y} |x - y|^{-t} \intd{\restr{\mu}{A}}(y) &=
	\int_0^\infty \restr{\mu}{A}\left(\cball{x}{z^{-1/t}} \setminus \{x\} \right)\intd z \\
	&\leq
	c \int_0^\infty
	\min\left(
	|A|^s, \, z^{-s/t}
	\right)\intd z
	=
	\frac{cs}{s - t}|A|^{s - t}.
	\end{align*}
Integrating this over $x$ with respect to $\restr{\mu}{A}$ proves the lemma.
\end{proof}
\end{lemma}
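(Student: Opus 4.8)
The plan is to apply the layer-cake identity \eqref{poteq} to the restricted measure $\restr{\mu}{A}$, rewriting the inner potential integral $\int_{x \neq y}|x - y|^{-t}\intd{\restr{\mu}{A}}(y)$ as $\int_0^\infty \restr{\mu}{A}(\cball{x}{z^{-1/t}} \setminus \{x\})\intd z$, and then to bound the integrand by an explicit, integrable function of $z$ that does not depend on $x$. Once that bound is in place, the full energy $I'_t(\restr{\mu}{A})$ is obtained by integrating over $x$ against $\restr{\mu}{A}$, which produces the factor $\mu(A)$ in front.

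The key is a two-sided estimate for $\restr{\mu}{A}(\cball{x}{z^{-1/t}} \setminus \{x\})$ valid for $\restr{\mu}{A}$-a.e.~$x$. On the one hand, $(c, s)'$-uniformity gives directly $\restr{\mu}{A}(\cball{x}{r} \setminus \{x\}) \leq \mu(\cball{x}{r} \setminus \{x\}) \leq cr^s$, so with $r = z^{-1/t}$ the integrand is at most $cz^{-s/t}$. On the other hand, since $\restr{\mu}{A}$ is concentrated on $A$ and a $\restr{\mu}{A}$-generic point $x$ lies in $A$, every point of $A$ is within distance $|A|$ of $x$; hence $\cball{x}{r} \cap A \subseteq \cball{x}{|A|}$ for every $r$, and applying $(c, s)'$-uniformity at radius $|A|$ yields $\restr{\mu}{A}(\cball{x}{r} \setminus \{x\}) \leq c|A|^s$. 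Combining the two gives $\restr{\mu}{A}(\cball{x}{z^{-1/t}} \setminus \{x\}) \leq c\min(|A|^s, z^{-s/t})$.

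With this bound the remaining work is a one-variable computation of $\int_0^\infty \min(|A|^s, z^{-s/t})\intd z$. The two expressions cross at $z = |A|^{-t}$, so the integral splits into a constant piece on $(0, |A|^{-t})$ and a tail $\int_{|A|^{-t}}^\infty z^{-s/t}\intd z$. Because $t < s$ one has $s/t > 1$, which makes the tail converge and evaluates the whole integral to $\frac{s}{s - t}|A|^{s - t}$; multiplying by $c$ and integrating the now $x$-independent bound against $\restr{\mu}{A}$ delivers the claimed $\frac{cs}{s - t}|A|^{s - t}\mu(A)$.

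The only genuinely delicate point is the truncation by $|A|^s$. Without it the integrand would behave like $cz^{-s/t}$ as $z \to 0$, and since $s/t > 1$ this is non-integrable at the origin; it is precisely the fact that $\restr{\mu}{A}$ lives in a set of diameter $|A|$---so that no ball, however large, can capture more than $c|A|^s$ of its mass---that restores integrability there. Everything else is a direct substitution into \eqref{poteq} together with an elementary integral, so I do not anticipate any serious obstacle beyond correctly identifying this cap and the crossover point $z = |A|^{-t}$.
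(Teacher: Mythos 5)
Your proposal is correct and follows essentially the same route as the paper: the layer-cake identity \eqref{poteq} applied to $\restr{\mu}{A}$, the pointwise bound $\restr{\mu}{A}\left(\cball{x}{z^{-1/t}} \setminus \{x\}\right) \leq c \min\left(|A|^s, z^{-s/t}\right)$, the elementary integral giving $\frac{cs}{s-t}|A|^{s-t}$, and a final integration in $x$. Your explicit justification of the $|A|^s$ cap (that $\restr{\mu}{A}$-a.e.~$x$ lies in $A$, so $A \subset \cball{x}{|A|}$ and $(c,s)'$-uniformity applies at radius $|A|$) is exactly the step the paper leaves implicit.
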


\section{Fractal trees}	\label{treesec}
\begin{definition}
A \emph{fractal tree} is a triple $(\mathcal B, R, \pi)$ such that
\begin{enumerate}[label=\roman*)]
\item
$\mathcal B$ is a set of closed balls in $\rea^d$, and $R \in \mathcal B$,

\item
$\pi$ is a function $\mathcal B \setminus \{R\} \to \mathcal B$ such
that for every $B \in \mathcal B$ there exists $g \in \{0, 1, 2, \ldots\}$
such that $\pi^g(B) = R$, and

\item \label{treepropiii}
for every $B \in \mathcal B$, the balls in $\pi^{-1}(B)$ have the same radius,
are disjoint and included in $B$, and $2 \leq \#\pi^{-1}(B) < \infty$.
\end{enumerate}
\end{definition}

The ball $R$ is the \emph{root} of the tree, $\pi(B)$ is the
\emph{parent} of $B$ and $\pi^{-1}(B)$ are
the \emph{children} of $B$. The number $g$ in \emph{ii)} is uniquely
determined since $R$ does not have a parent, and thus defines
a function $g \colon \mathcal B \to \{0, 1, 2, \ldots\}$. The number
$g(B)$ is the \emph{generation} of $B$. From \emph{iii)} it follows
that there are only finitely many balls of any given generation,
and in particular $\mathcal B$ is countable. Property \emph{iii)}
also implies that $r(B) \leq D 2^{-g(B)}$, where $D$ is the
diameter of the balls of generation $1$. A ball $A \in \mathcal B$
is an \emph{ancestor} of $B$, denoted $A \anc B$, if there is some
$k \in \{0, 1, 2, \ldots\}$ such that $\pi^k(B) = A$.

Associated to a fractal tree $(\mathcal B, R, \pi)$ is the set
$K(\mathcal B) = \limsup \mathcal B$, consisting of those points
that are contained in infinitely many balls in $\mathcal B$.
The set $K(\mathcal B)$ is totally disconnected and perfect,
and can also be expressed as a decreasing intersection of compact
sets by
	\[
	K(\mathcal B) = \bigcap_{k = 0}^\infty \Big(
	\bigcup_{g(B) = k}
	B\Big).
	\]

A probability measure $\theta$ that is concentrated on $K(\mathcal B)$
can be defined as follows. For $B \in \mathcal B$ let $\nu_B$ be the uniform
probability measure on the centres of the children of $B$, that is
	\[
	\nu_B = \frac{1}{\# \pi^{-1}(B)} \sum_{C \in \pi^{-1}(B)} \delta_{x(C)},
	\]
and define
	\[
	\theta(B) = \prod_{R \neq A \anc B} \nu_{\pi(A)}(A).
	\]
Then $\theta(R) = 1$, and
	\[
	\theta(B) = \sum_{C \in \pi^{-1}(B)} \theta(C)
	\]
for every $B \in \mathcal B$.
It follows that $\theta$ can be extended to an additive measure on the algebra
$\mathcal A$ on $K(\mathcal B)$ consisting of the intersections of $K(\mathcal B)$ with
finite unions of balls in $\mathcal B$.
Since the sets in $\mathcal A$ are both compact and open in the relative
topology on $K(\mathcal B)$, it is not possible to express a set in
$\mathcal A$ as a union of infinitely many disjoint sets in $\mathcal A$.
Hence $\theta$ is vacuously $\sigma$-additive on $\mathcal A$, and
can therefore be extended to a probability measure on $K(\mathcal B)$
by Carath\'eodory's theorem~\cite[Section~II.3]{Shiryaev}.

\begin{lemma}	\label{treelemma}
Let $(\mathcal B, R, \pi)$ be a fractal tree such that whenever
$B_1, B_2 \in \mathcal B$ and $\pi(B_1) = \pi(B_2)$ then
	\[
	\dist\left(B_1, B_2\right) \geq \frac{1}{2} \left|x(B_1) - x(B_2)\right|.
	\]
Let $s$, $t$ and $\{c_B\}_{B \in \mathcal B}$ be positive numbers such that
$t < s$ and $\nu_B$ is $(c_B, s)'$-uniform for every $B \in \mathcal B$. Then
	\[
	I_t(\theta) \leq \frac{2^t s}{s - t}
	\sum_{B \in \mathcal B}\theta(B)^2 c_B |B|^{s - t}.
	\]

\begin{proof}
Since every ball in $\mathcal B$
has at least two children, the measure $\theta$ is non-atomic. If $x_1$ and
$x_2$ are distinct points in $K(\mathcal B)$ then there exist uniquely
a ball $B$ in $\mathcal B$ and distinct children $C_1$ and $C_2$ of $B$
such that $x_1 \in C_1$ and $x_2 \in C_2$. Thus
	\begin{align*}
	I_t(\theta) = I_t'(\theta)
	&=
	\iint_{x_1 \neq x_2} |x_1 - x_2|^{-t} \intd\theta(x_1)\intd\theta(x_2) \\
	&=
	\sum_{B \in \mathcal B} {\sum}_B'
	\iint_{C_1 \times C_2} |x_1 - x_2|^{-t} \intd\theta(x_1)\intd\theta(x_2),
	\end{align*}
where $\sum_B'$ denotes the sum over all pairs $(C_1, C_2)$ of distinct
children of $B$. The integral can be estimated by
	\[
	\iint_{C_1 \times C_2} |x_1 - x_2|^{-t} \intd\theta(x_1)\intd\theta(x_2) \leq
	2^t \left|x(C_1) - x(C_2)\right|^{-t} \theta(C_1)\theta(C_2),
	\]
and hence, since $\theta(C_i) = \theta(B) \nu_B(x(C_i))$, the inner sum is less than
or equal to
	\begin{align*}
	2^t \theta(B)^2 {\sum}_B'
	|x(C_1) - x(C_2)|^{-t} \nu_B(x(C_1)) \nu_B(x(C_2))
	&= 2^t \theta(B)^2 I'_t(\nu_B) \\
	&\leq
	\frac{2^t s}{s - t} \theta(B)^2 c_B |B|^{s - t},
	\end{align*}
using Lemma~\ref{Itlemma} at the last step.
\end{proof}
\end{lemma}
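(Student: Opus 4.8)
The plan is to exploit the branching structure of $K(\mathcal B)$ to turn the global energy $I_t(\theta)$ into a sum of local energies, one for each ball of the tree. First I would observe that since every ball in $\mathcal B$ has at least two children, $\theta$ charges no single point, so it is non-atomic and $I_t(\theta) = I'_t(\theta)$; this lets me restrict attention to the off-diagonal $\{x_1 \neq x_2\}$. The structural fact I would then use is that any two distinct points of $K(\mathcal B)$ share a common chain of ancestors down to some generation and then separate: there is a unique ball $B \in \mathcal B$ and two distinct children $C_1, C_2$ of $B$ with $x_1 \in C_1$ and $x_2 \in C_2$. Classifying each off-diagonal pair by this branch ball decomposes the energy as
\[
I_t(\theta) = \sum_{B \in \mathcal B} {\sum}'_B \iint_{C_1 \times C_2} |x_1 - x_2|^{-t} \intd\theta(x_1)\intd\theta(x_2),
\]
where ${\sum}'_B$ runs over the ordered pairs of distinct children of $B$.

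Next I would estimate each inner integral with the separation hypothesis. For $x_1 \in C_1$ and $x_2 \in C_2$ with $\pi(C_1) = \pi(C_2) = B$, the points are at least as far apart as the balls, so $|x_1 - x_2| \geq \dist(C_1, C_2) \geq \tfrac{1}{2}|x(C_1) - x(C_2)|$, whence $|x_1 - x_2|^{-t} \leq 2^t |x(C_1) - x(C_2)|^{-t}$. Integrating this bound and using $\theta(C_i) = \theta(B)\,\nu_B(x(C_i))$, the inner sum over children becomes $2^t \theta(B)^2 \,{\sum}'_B |x(C_1) - x(C_2)|^{-t}\, \nu_B(x(C_1))\,\nu_B(x(C_2))$, which is precisely $2^t \theta(B)^2 I'_t(\nu_B)$, the $t$-energy of the uniform measure on the child centres.

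Finally I would apply Lemma~\ref{Itlemma} to each $\nu_B$. Since $\nu_B$ is $(c_B, s)'$-uniform and is concentrated on the child centres, all of which lie in $B$, taking $A = B$ gives $I'_t(\nu_B) \leq \tfrac{c_B s}{s-t}|B|^{s-t}\,\nu_B(B) = \tfrac{c_B s}{s-t}|B|^{s-t}$, using that $\nu_B$ is a probability measure supported in $B$. Summing the resulting bound $\tfrac{2^t s}{s-t}\theta(B)^2 c_B |B|^{s-t}$ over all $B \in \mathcal B$ then yields the claimed inequality.

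The step I expect to need the most care is the branch-ball decomposition: one must check that every off-diagonal pair has a well-defined, unique separating ball $B$, and that the sets $C_1 \times C_2$ over all $B$ and all distinct child pairs partition $\{x_1 \neq x_2\}$ up to a $\theta \times \theta$-null set, so that the displayed identity really holds. The interchange of the countable sum with the integral is then harmless, since all integrands are non-negative (Tonelli). Verifying that $\nu_B$ satisfies the hypotheses of Lemma~\ref{Itlemma}, in particular that it is supported inside $B$ so that the relevant diameter is $|B|$, is routine.
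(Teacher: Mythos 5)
Your proposal is correct and follows essentially the same route as the paper's own proof: the same branch-ball decomposition of the off-diagonal energy, the same $2^t$-estimate from the separation hypothesis, and the same application of Lemma~\ref{Itlemma} with $A = B$ to bound $I'_t(\nu_B)$. The extra care you flag (uniqueness of the separating ball and Tonelli for the sum--integral interchange) is exactly the justification implicit in the paper's argument.
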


\section{Probabilistic lemmas}
\begin{lemma} \label{deviationlemma}
Let $(S, \mathbf P)$ be a probability space and let $(A_n)_{n = 1}^\infty$
be a sequence of independent events in $S$. Let
	\[
	M_n = \sum_{k = 1}^n \mathbf P(A_k)
	\qquad\text{and}\qquad
	N_n(s) = \# \left\{ k \leq n; \, s \in A_k\right\},
	\]
and assume that\/ $\lim_{n \to \infty} M_n / \log n = \infty$.
Then $\mathbf P$-a.s.,
	\[
	\lim_{n \to \infty} \frac{N_n}{M_n} = 1.
	\]

\begin{proof}
Let $\varepsilon \in (0, 1]$. For every $s \in S$,
	\[
	N_n(s) = \sum_{k = 1}^n \chi_{A_k}(s),
	\]
and thus $\E N_n = M_n$. Thus, by the Chernoff
bounds~\cite[Equations~(6) and~(7)]{HagerupRub90}
for sums of independent random variables,
	\[
	\mathbf P (\{s; \, \left|N_n(s) - M_n \right| > \varepsilon M_n\})
	\leq
	2e^{-\frac{\varepsilon^2 M_n}{3}} =
	2n^{-\frac{\varepsilon^2}{3}\frac{M_n}{\log n}}.
	\]
This is summable in $n$, so by the Borel--Cantelli lemma there are
$\mathbf P$-a.s.~only finitely many $n$ such that
$\left|N_n - M_n \right| > \varepsilon M_n$. Letting $\varepsilon \to 0$ along
a countable set concludes the proof.
\end{proof}
\end{lemma}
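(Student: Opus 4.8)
The plan is to fix $\varepsilon > 0$, prove a concentration estimate showing that $N_n$ is unlikely to deviate from its mean $M_n$ by more than a factor $\varepsilon$, and then combine this with the Borel--Cantelli lemma. First I would observe that $N_n = \sum_{k = 1}^n \chi_{A_k}$ is a sum of independent $\{0, 1\}$-valued random variables, so that $\E N_n = M_n$. The assertion $N_n / M_n \to 1$ is then a multiplicative strong law of large numbers, and the natural route is to bound the probability of the event $\{|N_n - M_n| > \varepsilon M_n\}$ for each $n$ and show that these bounds are summable.

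For the concentration step I would invoke the multiplicative Chernoff bound for sums of independent indicators, which gives, for $\varepsilon \in (0, 1]$,
\[
\mathbf P\bigl(|N_n - M_n| > \varepsilon M_n\bigr) \leq 2 e^{-\varepsilon^2 M_n / 3}.
\]
The crucial point is that the hypothesis $M_n / \log n \to \infty$ lets me rewrite the right-hand side as $2 n^{-\frac{\varepsilon^2}{3} \cdot \frac{M_n}{\log n}}$, whose exponent tends to $+\infty$. In particular, for all large $n$ the exponent exceeds $2$, so the terms are eventually dominated by $2 n^{-2}$ and the series $\sum_n \mathbf P(|N_n - M_n| > \varepsilon M_n)$ converges. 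By Borel--Cantelli, almost surely only finitely many $n$ satisfy $|N_n - M_n| > \varepsilon M_n$, that is, almost surely $\limsup_n |N_n / M_n - 1| \leq \varepsilon$. Intersecting the corresponding full-measure events over $\varepsilon = 1/m$ with $m \in \nat$ then yields $N_n / M_n \to 1$ almost surely.

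The argument is short, and the only real obstacle is selecting the correct concentration inequality: what is needed is the \emph{relative} (multiplicative) deviation bound, since the tolerance $\varepsilon M_n$ scales with the mean, rather than an additive Hoeffding-type estimate. The restriction $\varepsilon \in (0, 1]$ is harmless, as we only let $\varepsilon \to 0$. Beyond this I expect no difficulty; the growth hypothesis on $M_n$ is precisely what converts the exponential tail into a summable power of $n$, and if a self-contained treatment were preferred one could derive the required bound directly from the moment generating function of $N_n$ rather than citing it.
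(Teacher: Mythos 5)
Your proposal is correct and follows essentially the same route as the paper's proof: the multiplicative Chernoff bound $\mathbf P(|N_n - M_n| > \varepsilon M_n) \leq 2e^{-\varepsilon^2 M_n/3}$, rewritten as $2n^{-\frac{\varepsilon^2}{3}\frac{M_n}{\log n}}$ via the growth hypothesis, followed by Borel--Cantelli and a countable intersection over $\varepsilon = 1/m$. Your added remark that the exponent eventually exceeds $2$ (so the terms are dominated by $2n^{-2}$) just makes explicit the summability the paper asserts without comment.
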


The following lemma is a weaker variant of Lemma~\ref{problemma},
sufficient for the proof of Theorem~\ref{mainthm}.

\begin{lemma} \label{restrlemma}
Let $\mu$ be a probability measure on $\rea^d$, let $A$ be a Borel subset of\/ $\rea^d$ of
positive $\mu$-measure and let $\mu_A = \restr{\mu}{A} / \mu(A)$.
Then $f_{\mu_A}(\alpha + \varepsilon) \leq f_\mu(\alpha)$ for every $\varepsilon > 0$.

\begin{proof}
For $\omega \in \Omega$, let $(n_k(\omega))$ be the strictly increasing
enumeration of $\{n; \, \omega_n \in A\}$, and define the map
$\pi \colon \Omega \to \Omega$ by $\pi(\omega)_k = \omega_{n_k(\omega)}$.
By Lemma~\ref{deviationlemma} (with $S = \Omega$, $\mathbf P = \P{\mu}$
and $A_n = \{ \omega; \, \omega_n \in A\}$), $\P{\mu}$-a.e.~$\omega$ satisfies
	\[
	\lim_{k \to \infty} \frac{k}{n_k(\omega)} = \mu(A),
	\]
and for such $\omega$,
	\begin{align*}
	E_{\alpha + \varepsilon}(\pi(\omega))
	&=
	\limsup_k \cball{\omega_{n_k(\omega)}}{k^{-(\alpha + \varepsilon)}}
	\subset
	\limsup_k \cball{\omega_{n_k(\omega)}}{n_k^{-\alpha}} \\
	&=
	\limsup_{\omega_n \in A} \cball{\omega_n}{n^{-\alpha}}
	\subset E_\alpha(\omega).
	\end{align*}
Thus for $\P{\mu}$-a.e.~$\omega$,
	\[
	\dimh E_{\alpha + \varepsilon}(\pi(\omega)) \leq f_\mu(\alpha),
	\]
and the lemma follows since $\P{\mu} \circ \pi^{-1} = \P{\mu_A}$.
\end{proof}
\end{lemma}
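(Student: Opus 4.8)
The plan is to realise a $\mu_A$-distributed sequence as the subsequence of a $\mu$-distributed sequence consisting of those coordinates that happen to fall in $A$, and then compare the two limsup sets directly. First I would sample $\omega \sim \P{\mu}$ and record the indices $n_1(\omega) < n_2(\omega) < \cdots$ at which $\omega_n \in A$, defining $\pi \colon \Omega \to \Omega$ by $\pi(\omega)_k = \omega_{n_k(\omega)}$. Because the coordinates of $\omega$ are independent and each lands in $A$ with probability $\mu(A) > 0$, the selected subsequence is itself an independent sequence with common law $\mu_A$; that is, the pushforward satisfies $\P{\mu} \circ \pi^{-1} = \P{\mu_A}$. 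This identity is what ties the two problems together, and checking it carefully is where I would be most cautious: the delicate point is that conditioning on a coordinate lying in $A$ does not disturb the independence of the selected coordinates.

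Next I would control the growth of the selection times. Applying Lemma~\ref{deviationlemma} with $A_n = \{\omega;\, \omega_n \in A\}$, so that $\mathbf{P}(A_n) = \mu(A)$ and $M_n = n\,\mu(A)$ (with $M_n / \log n \to \infty$ since $\mu(A) > 0$), gives that $\P{\mu}$-a.e.\ $\omega$ satisfies $k / n_k(\omega) \to \mu(A)$, so that $n_k$ grows linearly in $k$. The purpose of the extra $\varepsilon$ in the exponent now becomes visible: since $n_k^{-\alpha}$ is of order $k^{-\alpha}$ while $k^{-(\alpha + \varepsilon)}$ carries an additional factor $k^{-\varepsilon} \to 0$, for all large $k$ one has $k^{-(\alpha + \varepsilon)} \leq n_k^{-\alpha}$, and hence the inclusion of balls $\cball{\omega_{n_k}}{k^{-(\alpha + \varepsilon)}} \subset \cball{\omega_{n_k}}{n_k^{-\alpha}}$.

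Taking limsups then yields, for such $\omega$, the chain of inclusions
\[
E_{\alpha + \varepsilon}(\pi(\omega)) = \limsup_k \cball{\omega_{n_k}}{k^{-(\alpha + \varepsilon)}}
\subset \limsup_k \cball{\omega_{n_k}}{n_k^{-\alpha}}
\subset \limsup_n \cball{\omega_n}{n^{-\alpha}} = E_\alpha(\omega),
\]
the last step holding because $\{n_k\}$ is a subsequence of \nat. Monotonicity of Hausdorff dimension gives $\dimh E_{\alpha + \varepsilon}(\pi(\omega)) \leq \dimh E_\alpha(\omega)$, and since the right-hand side equals $f_\mu(\alpha)$ for $\P{\mu}$-a.e.\ $\omega$, the bound $\dimh E_{\alpha + \varepsilon}(\pi(\omega)) \leq f_\mu(\alpha)$ holds almost surely. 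Pushing forward by $\pi$ and invoking $\P{\mu} \circ \pi^{-1} = \P{\mu_A}$, the left-hand quantity is $\P{\mu_A}$-a.s.\ equal to $f_{\mu_A}(\alpha + \varepsilon)$, which gives the claimed inequality. The main obstacle, as indicated, is the clean identification of the pushforward measure; once that is secured, the growth estimate for $n_k$ and the geometric comparison of the two limsup sets are routine.
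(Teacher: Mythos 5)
Your proof is correct and takes essentially the same route as the paper's: the same selection map $\pi(\omega)_k = \omega_{n_k(\omega)}$, the same application of Lemma~\ref{deviationlemma} to the events $A_n = \{\omega;\, \omega_n \in A\}$ to get $k/n_k(\omega) \to \mu(A)$, the same chain of inclusions $E_{\alpha+\varepsilon}(\pi(\omega)) \subset E_\alpha(\omega)$, and the same conclusion via the pushforward identity $\P{\mu} \circ \pi^{-1} = \P{\mu_A}$. If anything, you spell out two points the paper leaves implicit, namely the verification that $M_n/\log n \to \infty$ and the elementary comparison $k^{-(\alpha+\varepsilon)} \leq n_k^{-\alpha}$ for large $k$, which is exactly where the extra $\varepsilon$ is used.
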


\begin{lemma} \label{Wlemma}
  Let $\mu$ be a $(c, s)$-uniform probability measure on $\rea^d$, let
  $\varepsilon \in (0, s)$ and let $(n_k)_{k = 1}^\infty$ be a
  sequence of natural numbers such that
  \[
  \liminf_{k \to \infty} \log(n_k) / k > 0.
  \]
  Then for $\P{\mu}$-a.e.~$\omega \in \Omega$ there is a natural number
  $k_0 = k_0(\omega)$ such that for every $k \geq k_0$ and every ball
  $B$ in\/ $\rea^d$,
  \[
  \# \left(\{\omega_{n_k}, \ldots, \omega_{2n_k - 1} \} \cap B \right)
  \leq W \max\left(n_k r(B)^{s - \varepsilon}, 1 \right),
  \]
  where
  \[
  W = 4^s \max\left( ce, \, s \varepsilon^{-1}, \, 2 \right).
  \]

\begin{proof}
The set
	\[
	\Omega_0 = \left\{\omega; \,
	\mu\left(\cball{\omega_i}{r}\right) \leq cr^s
	\text{ for } i = 1, 2, \ldots, \text{ and } r > 0\right\}
	\]
has full $\P{\mu}$-measure. Let $E_n \subset \Omega_0$ be the event that there
is a ball $B$ such that
	\begin{equation} \label{badineq1}
	\# \left(\{\omega_n, \ldots, \omega_{2n - 1} \} \cap B \right) \geq
	W \max\left(n r(B)^{s - \varepsilon}, 1 \right) + 1.
	\end{equation}
It will be shown that $\sum_k \P{\mu}(E_{n_k}) < \infty$, so that the
statement follows by the Borel--Cantelli lemma.

Let $E_n^m \subset \Omega_0$ be the event that there is some $i \in \{n, \ldots, 2n - 1\}$
such that
	\begin{equation} \label{badineq2}
	\# \left(\{\omega_n, \ldots, \omega_{2n - 1} \} \cap \cball{\omega_i}{2^{-m}} \right) \geq
	4^{-s} W n 2^{-m(s - \varepsilon)} + 1.
	\end{equation}
If~\eqref{badineq1} holds for a ball $\cball{x}{r}$ such that $n r^{s - \varepsilon} < 1$
then~\eqref{badineq1} also holds for $\cball{x}{r_1}$, where $r_1$ is such that
$n r_1^{s - \varepsilon} = 1$, since replacing $r$ by $r_1$ increases the left side
and does not change the right side. Thus if $\omega \in E_n$ then~\eqref{badineq1} must
hold for some ball $B$ of radius $r$ such that $n r^{s - \varepsilon} \geq 1$,
and moreover~\eqref{badineq1} can only be satisfied
if $W r^{s - \varepsilon} < 1$, since otherwise the right side
is greater than $n$. It follows in particular from ~\eqref{badineq1} that there is
some $i \in \{n, \ldots, 2n - 1\}$ such that $\omega_i \in B$, and then~\eqref{badineq2}
holds with $m$ such that $2r \leq 2^{-m} < 4r$, since $B \subset \cball{\omega_i}{2^{-m}}$.
This shows that $E_n \subset \bigcup_{m = m_0}^{m_1(n)} E_n^m$, where
	\[
	m_0 = \floor{\frac{\log W}{(s - \varepsilon) \log 2}} - 2
	\qquad\text{and}\qquad
	m_1(n) = \floor{\frac{\log n}{(s - \varepsilon) \log 2}}.
	\]

Fix $n$ and $m \in [m_0, m_1(n)]$ and let $K = \ceil{4^{-s}Wn2^{-m(s - \varepsilon)}}$.
For $\omega$ to lie in $E_n^m$, there are $n$ choices for $i$ in~\eqref{badineq2}
and then there must be at least $K$ of the remaining $n - 1$ points in
$\{\omega_n, \ldots, \omega_{2n - 1} \}$ that lie in $\cball{\omega_i}{2^{-m}}$.
Since $\mu(\cball{\omega_i}{2^{-m}}) \leq c2^{-ms}$, this implies that
	\begin{align*}
	\P{\mu}(E_n^m) &\leq n \binom{n - 1}{K} \left(c2^{-ms}\right)^K
	\leq
	n \left(\frac{en}{K}\right)^K \left(c2^{-ms}\right)^K \\
	&\leq
	n2^{-\varepsilon m K}
	\leq
	n 2^{-4^{-s}W \varepsilon m n2^{-m(s - \varepsilon)}},
	\end{align*}
using a standard estimate for the binomial coefficient, that $W \geq 4^s c e$,
and that $m_0 \geq 1$. The last expression
has the form $n\exp(n\psi(m))$, where
\mbox{$\psi(x) = - \beta x e^{-\gamma x}$} for some positive $\beta$ and $\gamma$.
Since $\psi'(x) = \beta\left(\gamma x - 1\right) e^{-\gamma x}$,
the function $\psi$ is decreasing for $x < \gamma^{-1}$ and increasing for
$x > \gamma^{-1}$. Thus the maximum on the interval $[m_0, m_1(n)]$ of $\psi$,
and hence of $n\exp(n\psi(m))$, is attained at one of the endpoints.
This gives
	\begin{align*}
	\P{\mu}(E_n^m) &\leq
	n 2^{-4^{-s}W \varepsilon m_0 n2^{-m_0(s - \varepsilon)}}
	+
	n 2^{-4^{-s}W \varepsilon m_1(n) n2^{-m_1(n)(s - \varepsilon)}} \\
	&\leq
	n 2^{-4^{-s}W \varepsilon m_0 n2^{-m_0(s - \varepsilon)}}
	+
	2^{4^{-s}W\varepsilon} n^{1 - \frac{4^{-s}W\varepsilon}{s - \varepsilon}}.
	\end{align*}

Since $m_1(n)$ increases logarithmically in $n$, the estimate of $\P{\mu}(E_n^m)$
implies that
	\begin{align*}
	\P{\mu}(E_n)
	&\leq
	\text{const.} \times \left(
	n 2^{-4^{-s}W \varepsilon m_0 n2^{-m_0(s - \varepsilon)}} \log n+
	n^{1 - \frac{4^{-s}W\varepsilon}{s - \varepsilon}} \log n
	\right)
	\end{align*}
for $n \geq 2$. Here the first term is summable in $n$, using again that $m_0 \geq 1$,
and the second term is summable over $(n_k)$ since $W \geq 4^s s \varepsilon^{-1}$
and $(n_k)$ increases exponentially.
\end{proof}
\end{lemma}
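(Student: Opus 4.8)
The plan is to apply the Borel--Cantelli lemma to the ``bad'' events. For each $n$, let $E_n$ be the event that some ball $B$ in $\rea^d$ satisfies
\[
\#\bigl(\{\omega_n, \ldots, \omega_{2n-1}\} \cap B\bigr) \geq W\max\bigl(n\, r(B)^{s-\varepsilon},\, 1\bigr) + 1,
\]
the extra $+1$ reflecting that the inequality to be proved is non-strict. Since the conclusion asserts that $\P{\mu}$-a.e.\ $\omega$ lies in only finitely many $E_{n_k}$, it suffices to show $\sum_k \P{\mu}(E_{n_k}) < \infty$.

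The first task is to tame the continuum of balls $B$. The count on the left depends on $B$ only through the finite set of block points it contains, so I would replace an offending $B$ by a ball of dyadic radius centred at a sample point: if $B$ is offending then it must contain some $\omega_i$ with $n \leq i \leq 2n-1$, and choosing $m$ with $2r(B) \leq 2^{-m} < 4r(B)$ gives $B \subset \cball{\omega_i}{2^{-m}}$, which only enlarges the count while changing the radius by a factor in $[2,4)$. A short argument shows the relevant radii may be restricted to $n\, r(B)^{s-\varepsilon} \geq 1$ (shrink otherwise) and $W r(B)^{s-\varepsilon} < 1$ (else the right-hand side exceeds $n$), confining $m$ to a range $[m_0, m_1(n)]$ with $m_0$ absolute and $m_1(n) \sim \log n / \log 2$. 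Thus $E_n$ is contained in a union of $\mathrm{O}(\log n)$ events $E_n^m$, each saying that some block point $\omega_i$ has at least about $4^{-s} W n 2^{-m(s-\varepsilon)}$ other block points in $\cball{\omega_i}{2^{-m}}$.

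For a fixed scale $m$ I would bound $\P{\mu}(E_n^m)$ by a union bound over the $n$ choices of centre $i$ together with a binomial tail estimate for the remaining points. By $(c,s)$-uniformity each further point lands in $\cball{\omega_i}{2^{-m}}$ with probability at most $c 2^{-ms}$, so with $K \approx 4^{-s}Wn2^{-m(s-\varepsilon)}$ the inequality $\binom{n-1}{K}(c 2^{-ms})^K \leq (ecn2^{-ms}/K)^K$ and the choice $W \geq 4^s c e$ produce a bound of the shape $n\exp(n\psi(m))$ with $\psi(x) = -\beta x e^{-\gamma x}$, $\beta,\gamma > 0$. Such a $\psi$ is unimodal, so its maximum over $[m_0, m_1(n)]$ is attained at an endpoint; evaluating there and summing over the $\mathrm{O}(\log n)$ scales yields $\P{\mu}(E_n) \leq \text{(const)}\,(n 2^{-c_1 n}\log n + n^{1-c_2}\log n)$ for suitable constants $c_1, c_2 > 0$.

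The crux is the calibration of $W$, which must simultaneously absorb three union-bound losses. The factor $c e$ makes the binomial tail genuinely exponentially small; the factor $4^s$ pays for the dyadic inflation of the radius; and the factor $s\varepsilon^{-1}$ forces $c_2 = 4^{-s}W\varepsilon/(s-\varepsilon) \geq s/(s-\varepsilon) > 1$, so that the polynomial term $n^{1-c_2}\log n$ is summable. The super-exponential term is harmlessly summable once $m_0 \geq 1$ keeps $c_1 > 0$, and it is precisely the hypothesis $\liminf_k \log(n_k)/k > 0$---which makes $(n_k)$ grow at least geometrically---that renders $\sum_k n_k^{1-c_2}\log n_k$ convergent. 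Borel--Cantelli then gives the claim. The main obstacle, beyond bookkeeping, is this simultaneous calibration together with the reduction to centred dyadic balls, since an inefficient reduction or a weaker tail bound would spoil the delicate inequality $c_2 > 1$ on which summability rests.
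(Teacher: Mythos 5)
Your proposal is correct and follows essentially the same route as the paper's own proof: the same events $E_n$ and $E_n^m$, the same reduction of an offending ball to a dyadic ball $\cball{\omega_i}{2^{-m}}$ centred at a sample point with $m$ confined to $[m_0, m_1(n)]$, the same union bound and binomial tail estimate calibrated by $W \geq 4^s ce$, the same endpoint argument for the unimodal exponent, and the same final summability argument using $W \geq 4^s s\varepsilon^{-1}$ together with the exponential growth of $(n_k)$. I see no gaps.
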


\section{Proof of Theorem~\ref{mainthm}} \label{sec:proofmain}

If $\mathcal C$ is a finite subset of $\rea^d$, define the probability measure
	\[
	\nu_{\mathcal C} = \frac{1}{\#\mathcal C} \sum_{x \in \mathcal C} \delta_x.
	\]

\begin{lemma}	\label{collectlemma}
Let $\mu$ be a $(c, s)$-uniform probability measure on $\rea^d$, let $u \geq \udimp \mu$,
and let $\varepsilon \in (0, s)$. Let $(r_n)_{n = 1}^\infty$
be a sequence of positive numbers such that\/
$\lim_{n \to \infty} n r_n^{s - \varepsilon} = 0$, let $n_0$ be
a natural number, and let $B$ be a ball in $\rea^d$ such that $\mu(B) > 0$.
Let $W = 4^s \max\left( ce, \, s \varepsilon^{-1}, \, 2 \right)$. Then for
$\P{\mu}$-a.e.~$\omega$ there is a natural number $n \geq n_0$ and a set
$\mathcal C \subset \{\omega_n, \ldots, \omega_{2n - 1} \} \cap B$
such that
\begin{enumerate}[label=\roman*)]
\item \label{propi}
$\# \mathcal C \geq \frac{\mu(B) n}{2W}$,

\item \label{propii}
$\left|x_1 - x_2 \right| \geq 8 r_n$ for all pairs of distinct
points $x_1$ and $x_2$ in $\mathcal C$,

\item \label{propiii}
$\nu_{\mathcal C}$ is $(c_B, s- \varepsilon)'$-uniform, with
$c_B = \frac{2 W^2}{\mu(B)}$, and

\item \label{propiv}
$\mu\left( \cball{x}{r_n} \right) \geq r_n^{u + \varepsilon}$ for every
$x \in \mathcal C$.
\end{enumerate}

\begin{proof}
It may be assumed that $n_0$ is large enough so that
$n (8r_n)^{s - \varepsilon} \leq 1$ for every $n \geq n_0$.
Let 
	\[
	G_m = \{x \in B; \, \mu\left(\cball{x}{r_n}\right) \geq r_n^{u + \varepsilon}
	\text{ for all $n \geq m$}\}
	\]
and
	\[
	\mathcal C_n^\omega = \{\omega_i \in G_n; \, i \in \{n, \ldots, 2n - 1\}\}.
	\]
Since $(G_m)$ is increasing and by Lemma~\ref{deviationlemma},
	\begin{align*}
	\liminf_{n \to \infty} \frac{\# \mathcal C_n^\omega}{n}
	\geq
	\lim_{n \to \infty} \Bigg(
	&\frac{\# \mathcal \{\omega_i \in G_m; \, i \in \{1, \ldots, 2n - 1\}\}}{n} \\
	-
	&\frac{\# \mathcal \{\omega_i \in G_m; \, i \in \{1, \ldots, n - 1\}\}}{n}
	\Bigg)
	= \mu(G_m)
	\end{align*}
for every $m$ and $\P{\mu}$-a.e.~$\omega$. Moreover,
$\lim_{m \to \infty} \mu(G_m) = \mu(B)$ by the definition of $\udimp \mu$,
and thus
	\[
	\liminf_{n \to \infty} \frac{\# \mathcal C_n^\omega}{n} \geq \mu(B)
	\]
for $\P{\mu}$-a.e.~$\omega$. This together with Lemma~\ref{Wlemma} implies
that for $\P{\mu}$-a.e.~$\omega$ there is some $n \geq n_0$ such that
	\begin{align}
	&\# (\mathcal C_n^\omega \cap B) \geq \frac{\mu(B) n}{2}, \text{ and} \label{ketchup} \\
	&\#(\mathcal C_n^\omega \cap \cball{x}{r} ) \leq
	W \max\left(n r^{s - \varepsilon}, \, 1\right)
	\quad\text{for all } x \in \rea^d, \, r > 0. \label{majonais}
	\end{align}
Fix such $\omega$ and $n$, and for $i \in \{ n, \ldots, 2n - 1\}$ let
	\[
	M(i) = \left\{j \in \{ i + 1, \ldots, 2n - 1\}; \, |\omega_i - \omega_j| \leq
	8 r_n \right\};
	\]
then by~\eqref{majonais} and since $n (8r_n)^{s - \varepsilon} \leq 1$, the set $M(i)$
has at most $W - 1$ elements. Now $\mathcal C$ can be constructed recursively as follows.
For $j \in \{n, \ldots, 2n - 1\}$, assume that $\mathcal{C} \cap \{ \omega_n, \ldots, \omega_{j - 1} \}$
has be defined, and include $\omega_j$ in $\mathcal{C}$ if and only if $\omega_j \in B$ and
there is no $i < j$ such that $\omega_i \in \mathcal C$ and $j \in M(i)$.
Then it is clear that
$\mathcal C \subset \{\omega_n, \ldots, \omega_{2n - 1} \} \cap B$ and that
\itemref{propii} and~\itemref{propiv} hold.
If $\omega_j \in \mathcal C_n^\omega \setminus \mathcal C$
then there is some $i$ such that $\omega_i \in \mathcal C$ and $j \in M(i)$,
and thus
	\[
	\# \mathcal C_n^\omega \setminus \mathcal C
	\leq
	\sum_{i; \, \omega_i \in \mathcal C} \#M(i)
	\leq
	(W - 1) \# \mathcal C,
	\]
so that by~\eqref{ketchup},
	\[
	\# \mathcal C \geq \frac{\# \mathcal C_n^\omega}{W} \geq \frac{\mu(B)n}{2W},
	\]
that is, property~\itemref{propi} holds. Then property \itemref{propiii}
follows from \itemref{propi} and~\eqref{majonais}.
\end{proof}
\end{lemma}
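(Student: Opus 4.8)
The plan is to carve out of the random block $\{\omega_n,\dots,\omega_{2n-1}\}$ a $\rho_n$-separated subset, supported on points of large local $\mu$-mass, by pairing a law-of-large-numbers estimate for the abundance of ``good'' points with the uniform local counting bound of Lemma~\ref{Wlemma}, followed by a greedy thinning.

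First I would secure \itemref{propiv}. Since $u \ge \udimp\mu = \esssup_{x\sim\mu}\udim{\mu}{x}$, for $\mu$-a.e.\ $x$ we have $\udim{\mu}{x} \le u < u+\varepsilon$, so $\mu(\cball{x}{r}) \ge r^{u+\varepsilon}$ for all small $r$; as $r_n \to 0$ the increasing sets $G_m = \{x \in B;\, \mu(\cball{x}{r_n}) \ge r_n^{u+\varepsilon}\text{ for all }n \ge m\}$ exhaust $B$ up to a null set, whence $\mu(G_m) \uparrow \mu(B)$. Writing $\mathcal C_n^\omega = \{\omega_i \in G_n;\, n \le i \le 2n-1\}$ and applying Lemma~\ref{deviationlemma} to the independent events $\{\omega_i \in G_m\}$ for each fixed $m$ (subtracting the counts up to $n-1$ from those up to $2n-1$) gives $\liminf_n \#\mathcal C_n^\omega / n \ge \mu(G_m)$ almost surely; since $G_m \subset G_n$ for $n \ge m$, letting $m \to \infty$ yields $\liminf_n \#\mathcal C_n^\omega / n \ge \mu(B)$ a.s.

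Next I would invoke Lemma~\ref{Wlemma} along an exponentially growing index sequence (so that its hypothesis $\liminf \log(n_k)/k > 0$ is met), obtaining a.s.\ the bound $\#(\mathcal C_n^\omega \cap \cball{x}{r}) \le W\max(nr^{s-\varepsilon}, 1)$ for all $x$, $r$ and all large $n$ in the sequence. Intersecting this a.s.\ event with the density estimate lets me fix a single $n \ge n_0$ for which both \eqref{ketchup} (that is, $\#\mathcal C_n^\omega \ge \mu(B)n/2$) and \eqref{majonais} hold. I would then set $\rho_n = n^{-1/(s-\varepsilon)}$---which satisfies $\rho_n \ge 8r_n$ precisely because $n(8r_n)^{s-\varepsilon} \le 1$ for $n \ge n_0$---and build $\mathcal C$ greedily, scanning $\omega_n,\dots,\omega_{2n-1}$ and keeping a point of $\mathcal C_n^\omega$ iff it lies farther than $\rho_n$ from every point kept so far. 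Evaluating \eqref{majonais} at radius $\rho_n$, where $n\rho_n^{s-\varepsilon} = 1$, each kept point excludes at most $W-1$ others, so $\#\mathcal C \ge \#\mathcal C_n^\omega / W \ge \mu(B)n/(2W)$, giving \itemref{propi}; \itemref{propii} is immediate from the separation, and \itemref{propiv} from $\mathcal C \subset G_n$.

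The step I expect to be the real obstacle is \itemref{propiii}, the $(c_B, s-\varepsilon)'$-uniformity, and in particular its verification at small radii. The estimate from \eqref{majonais} only controls $\nu_{\mathcal C}$ at radii with $nr^{s-\varepsilon} \ge 1$; for smaller $r$ it degrades to a constant that need not be $\le c_B r^{s-\varepsilon}$. The resolution is to separate at the threshold scale $\rho_n \sim n^{-1/(s-\varepsilon)}$ where the counting bound turns nontrivial, rather than at the finer scale $8r_n$: then a punctured ball $\cball{x}{r} \setminus \{x\}$ can meet $\mathcal C$ only when $r > \rho_n$, i.e.\ when $nr^{s-\varepsilon} > 1$, so \eqref{majonais} is in its linear regime and $\nu_{\mathcal C}(\cball{x}{r} \setminus \{x\}) \le Wnr^{s-\varepsilon}/\#\mathcal C \le (2W^2/\mu(B))\,r^{s-\varepsilon} = c_B r^{s-\varepsilon}$. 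The balancing act is that this coarser separation must not destroy \itemref{propi}; it does not, precisely because $n\rho_n^{s-\varepsilon} = 1$ keeps the per-point exclusion count at $W-1$.
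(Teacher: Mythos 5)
Your proposal is correct, and it follows the same overall route as the paper: Lemma~\ref{deviationlemma} yields $\liminf_n \#\mathcal{C}_n^\omega / n \geq \mu(B)$ almost surely, Lemma~\ref{Wlemma} (which you correctly invoke along an exponentially growing subsequence, so that its hypothesis is met) yields the counting bound~\eqref{majonais}, and a greedy thinning then produces $\mathcal{C}$. The one substantive difference is the separation scale used in the thinning, and here your version is sharper than the paper's in a way that matters. The paper thins at scale $8r_n$ and then asserts that property~\itemref{propiii} follows from \itemref{propi} and~\eqref{majonais}; but that deduction only works in the linear regime $n r^{s - \varepsilon} \geq 1$. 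In the intermediate range $8 r_n \leq r < n^{-1/(s - \varepsilon)}$, a ball $\cball{x}{r}$ may still contain up to $W - 1$ further points of a set that is merely $8r_n$-separated, so that $\nu_{\mathcal{C}}\left(\cball{x}{r} \setminus \{x\}\right)$ can be of order $1/n$, while $c_B r^{s - \varepsilon}$ can be far smaller than $1/n$ (the hypothesis only requires $n r_n^{s - \varepsilon} \to 0$, so $r_n$ may decay very fast and leave a large such window). Your choice of separating at $\rho_n = n^{-1/(s-\varepsilon)}$ closes exactly this window: punctured balls of radius at most $\rho_n$ meet $\mathcal{C}$ in the empty set, and for $r > \rho_n$ the bound~\eqref{majonais} is in its linear regime, giving $\nu_{\mathcal{C}}\left(\cball{x}{r} \setminus \{x\}\right) \leq W n r^{s - \varepsilon} / \#\mathcal{C} \leq c_B r^{s - \varepsilon}$; meanwhile $n \rho_n^{s - \varepsilon} = 1$ keeps the per-point exclusion count at $W - 1$, so \itemref{propi} survives with the same constant, and $\rho_n \geq 8 r_n$ gives \itemref{propii}. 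A second, smaller refinement: you scan only the points of $\mathcal{C}_n^\omega$, whereas the paper's recursion admits any $\omega_j \in B$, which as literally written would jeopardize \itemref{propiv}; restricting to $\mathcal{C}_n^\omega \subset G_n$ is the right move. In short, your argument proves the lemma as stated and in doing so tightens the two points where the paper's own write-up is loose.
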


\begin{lemma} \label{almosttherelemma}
Let $\mu$ be a $(c, s)$-uniform probability measure on $\rea^d$ and let
$u \geq \udimp \mu$. Then
	\[
	f_\mu(\alpha) \geq \frac{1}{\alpha} + s - u.
	\]

\begin{proof}
Let $\varepsilon \in (0, s)$ and
	\[
	t < \frac{1}{\alpha} + s - u - 3\varepsilon,
	\]
and for $n = 1, 2, \ldots$, let $r_n = (2n)^{-\alpha} / 2$.
A triple $(\mathcal B_0, R_0, \pi_0)$ is a \emph{pre-tree} if it satisfies the
conditions in the definition of a fractal tree, but with \itemref{treepropiii} replaced
by
\begin{enumerate}[label=\emph{iii')}]
\item
for every $B \in \mathcal B_0$, the balls in $\pi_0^{-1}(B)$ have the same
radius, are separated by a distance greater than twice that radius, and are centred
in $B$; and $2 \leq \#\pi_0^{-1}(B) < \infty$.
\end{enumerate}

For $\P{\mu}$-a.e.~$\omega$, the following recursive construction of a pre-tree
$(\mathcal B_0, R_0, \pi_0)$ can be done, since each step in the construction works
$\P{\mu}$-a.s.~and there are only countable many steps. Let $R_0$ be any closed
ball such that $\mu(R_0) > 0$ and put $R_0$ in $\mathcal B_0$. Assuming that
generation $g$ of the pre-tree has been defined and that every ball in $\mathcal B_0$
so far has positive $\mu$-measure, consider a ball $B$ in $\mathcal B_0$ of
generation $g$. Let $W = 4^s \max\left( ce, \, s \varepsilon^{-1}, \, 2 \right)$
and let $n_0$ be an integer large enough so that
	\[
	\frac{\mu(B) n}{2 W^2} \geq r_n^{-1/\alpha + \varepsilon}
	\]
for $n \geq n_0$. Then for
$\P{\mu}$-a.e.~$\omega$ there is some $n \geq n_0$ and a set
$\mathcal C \subset \{\omega_n, \ldots, \omega_{2n - 1}\} \cap B \cap \supp\mu$
having the properties \emph{i)--iv)} of Lemma~\ref{collectlemma}.
Extend $(\mathcal B_0, R_0, \pi_0)$ by setting
	\[
	\pi_0^{-1}(B) = \left\{ \cball{x}{r_n}; \, x \in \mathcal C \right\}.
	\]
Then every ball in $\pi_0^{-1}(B)$ has positive $\mu$-measure, and
	\begin{equation} \label{nbrCesteq}
	\# \mathcal C \geq r_n^{-1/\alpha + \varepsilon}
	\end{equation}
by the choice
of $n_0$ and property~\emph{i)} of Lemma~\ref{collectlemma}.

For $\omega$ such that $(\mathcal B_0, R_0, \pi_0)$ can be constructed, let
$R = \cball{x(R_0)}{2r(R_0)}$ and
	\[
	\mathcal B = \left\{\cball{x}{2r}; \, \cball{x}{r} \in \mathcal B_0 \right\},
	\]
and define $\pi$ in the obvious way. Then
$(\mathcal B, R, \pi)$ is a fractal tree and $K(\mathcal B) \subset E_\alpha(\omega)$
since every ball in $\mathcal B \setminus \{R\}$ is a subset of a ball of the form
$\cball{\omega_k}{k^{-\alpha}}$. By property~\emph{\ref{propii}}
of Lemma~\ref{collectlemma}, if $B_1$ and $B_2$ are distinct balls in $\mathcal B$
such that $\pi(B_1) = \pi(B_2)$, then
	\[
	\dist\left( B_1, \, B_2 \right) \geq \frac{1}{2} \left| x(B_1) - x(B_2)\right|.
	\]
Define the probability measure
$\theta$ on $K(\mathcal B)$ as in Section~\ref{treesec}.
For $A \in \mathcal B$, let $\rho(A)$ denote the radius of the balls in
$\pi^{-1}(A)$. If $a$ and $b$ are two elements of
$[0, \infty]$, let $a \lesssim b$ mean that $a$ is finite or $b$ is infinite.
Thus $a \lesssim b$ if and only if there is a positive
number $\gamma$ such that $a \leq \gamma b$ or $a \leq b + \gamma$.
Then,
	\begin{align*}
	I_t(\theta)
	&\lesssim
	\sum_{B \in \mathcal B} \theta(B)^2 c_B |B|^{s - \varepsilon - t}
	\lesssim
	\sum_{A \in \mathcal B} \Bigg(\rho(A)^{s - \varepsilon - t}
	\sum_{B \in \pi^{-1}(A)} \theta(B)^2 c_B \Bigg)\\
	&\lesssim
	\sum_{A \in \mathcal B} \Bigg(
	\frac{\theta(A)\rho(A)^{s - \varepsilon - t}}{\left(\# \pi^{-1}(A)\right)^2}
	\sum_{B \in \pi^{-1}(A)} \frac{1}{\mu(B)} \Bigg)
	\lesssim
	\sum_{A \in \mathcal B}
	\theta(A)\rho(A)^{s + 1 / \alpha - t - u - 3\varepsilon}.
	\end{align*}
Here the first step is by Lemma~\ref{treelemma}, the second is valid
since every ball $B \in \mathcal B \setminus \{R\}$ appears in
the sum on the right and $|B| = 2\rho(A)$ for $B$ in the inner sum,
the third step uses that $\theta(B) = \theta(A) / \#\pi^{-1}(A)$
and property~\emph{\ref{propiii}} of Lemma~\ref{collectlemma},
and the last step holds since
$\#\pi^{-1}(A) \geq \rho(A)^{-1/\alpha + \varepsilon}$ by~\eqref{nbrCesteq},
and by property~\emph{\ref{propiv}} of Lemma~\ref{collectlemma}.
Now, if $A$ is of generation $g$ then $\rho(A) \leq D2^{-g}$
where $D$ is the diameter of the balls of generation $1$, and
the sum of $\theta(A)$ when $A$ runs through a generation
is $1$. Thus
	\[
	I_t(\theta)
	\lesssim
	\sum_{g = 0}^\infty \Bigg(
	2^{-g(s + 1 / \alpha - t - u - 3\varepsilon)}
	\sum_{\substack{A \in \mathcal B \\ g(A) = g}} \theta(A)\Bigg)
	=
	\sum_{g = 0}^\infty
	2^{-g(s + 1 / \alpha - t - u - 3\varepsilon)}
	< \infty,
	\]
by the choice of $t$.
Thus $\dimh E_\alpha \geq t$ almost surely, and the lemma follows since
$t$ can be taken arbitrarily close to $1 / \alpha + s - u$.
\end{proof}
\end{lemma}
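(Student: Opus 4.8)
The plan is to establish that, for $\P{\mu}$-a.e.~$\omega$, one has $\dimh E_\alpha(\omega) \geq t$ for every $t < 1/\alpha + s - u$; since $f_\mu(\alpha)$ is the almost sure value of $\dimh E_\alpha$, this gives the lemma. Fix $\varepsilon \in (0, s)$ and $t < 1/\alpha + s - u - 3\varepsilon$, and set $r_n = (2n)^{-\alpha}/2$, chosen so that a ball of radius $r_n$ is contained in some covering ball $\cball{\omega_k}{k^{-\alpha}}$ with $k \in \{n, \ldots, 2n - 1\}$. The idea is to build, inside $E_\alpha(\omega)$, a random fractal tree $(\mathcal B, R, \pi)$ as in Section~\ref{treesec}, carrying its natural measure $\theta$, and to show that $I_t(\theta) < \infty$. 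By the energy method this forces $\dimh K(\mathcal B) \geq t$, and since $K(\mathcal B) \subset E_\alpha(\omega)$ the claim follows.

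First I would construct the tree recursively. Starting from a root ball $R_0$ with $\mu(R_0) > 0$, and given a ball $B$ already placed in generation $g$ with $\mu(B) > 0$, I would apply Lemma~\ref{collectlemma} (with $u \geq \udimp \mu$ and the sequence $(r_n)$, which satisfies $n r_n^{s - \varepsilon} \to 0$) to produce, $\P{\mu}$-a.s., an index $n$ and a set $\mathcal C \subset \{\omega_n, \ldots, \omega_{2n - 1}\} \cap B$ enjoying the properties \itemref{propi}--\itemref{propiv}: its points are $8r_n$-separated, $\nu_{\mathcal C}$ is $(c_B, s - \varepsilon)'$-uniform with $c_B = 2W^2/\mu(B)$, every centre $x \in \mathcal C$ satisfies $\mu(\cball{x}{r_n}) \geq r_n^{u + \varepsilon}$, and $\#\mathcal C \geq \mu(B) n/(2W)$. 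Declaring the children of $B$ to be the balls $\cball{x}{r_n}$ for $x \in \mathcal C$ yields a pre-tree, each child having positive $\mu$-measure by property \itemref{propiv}, so the induction continues; doubling every radius (replacing each $\cball{x}{r}$ by $\cball{x}{2r}$) turns it into a genuine fractal tree, whose separation hypothesis for Lemma~\ref{treelemma} holds by the $8r_n$-separation, and which still lies inside $E_\alpha(\omega)$. As each step succeeds almost surely and there are only countably many steps, the whole tree exists $\P{\mu}$-a.s.

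The crucial step is the energy estimate. I would apply Lemma~\ref{treelemma} with the exponents $s - \varepsilon$ and $t$ (legitimate since $t < s - \varepsilon$) to obtain $I_t(\theta) \lesssim \sum_{B \in \mathcal B} \theta(B)^2 c_B |B|^{s - \varepsilon - t}$, and then regroup by summing over the children $B \in \pi^{-1}(A)$ of each ball $A$. Writing $\rho(A)$ for the common radius of those children and substituting $\theta(B) = \theta(A)/\#\pi^{-1}(A)$, $c_B = 2W^2/\mu(B)$ and $|B| = 2\rho(A)$, and using $\theta(A)^2 \leq \theta(A)$, the inner sum becomes, up to constants, $\theta(A)\rho(A)^{s - \varepsilon - t}(\#\pi^{-1}(A))^{-2}\sum_{B \in \pi^{-1}(A)}\mu(B)^{-1}$. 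The remaining two inputs of Lemma~\ref{collectlemma} then close the estimate: property \itemref{propiv} gives $\mu(B) \gtrsim \rho(A)^{u + \varepsilon}$, so $\sum_B \mu(B)^{-1} \lesssim \#\pi^{-1}(A)\,\rho(A)^{-(u + \varepsilon)}$, and property \itemref{propi} combined with the radius choice gives $\#\pi^{-1}(A) \gtrsim \rho(A)^{-1/\alpha + \varepsilon}$. After simplification each term is $\lesssim \theta(A)\rho(A)^{s + 1/\alpha - t - u - 3\varepsilon}$; bounding $\rho(A) \leq D 2^{-g}$ on generation $g$ and using $\sum_{g(A) = g}\theta(A) = 1$ then reduces $I_t(\theta)$ to the geometric series $\sum_{g = 0}^\infty 2^{-g(s + 1/\alpha - t - u - 3\varepsilon)}$, which converges by the choice of $t$.

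I expect the main obstacle to be the careful bookkeeping of exponents, ensuring that the per-generation exponent $s + 1/\alpha - t - u - 3\varepsilon$ is strictly positive: this is exactly where the three separate $\varepsilon$-losses---one from the uniform exponent $s - \varepsilon$, one from $\mu(\cball{x}{r_n}) \geq r_n^{u + \varepsilon}$, and one from $\#\pi^{-1}(A) \gtrsim \rho(A)^{-1/\alpha + \varepsilon}$---must be tracked and absorbed into the slack in $t$. Once the series is summable, $I_t(\theta) < \infty$ yields $\dimh E_\alpha(\omega) \geq \dimh K(\mathcal B) \geq t$ almost surely; letting $\varepsilon \to 0$ and $t \to 1/\alpha + s - u$ completes the proof.
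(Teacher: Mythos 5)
Your proposal is correct and follows essentially the same route as the paper's proof: the same recursive construction of a pre-tree via Lemma~\ref{collectlemma} (with the per-ball choice of $n$ absorbing the $\mu(B)$-dependence into the $\varepsilon$-loss in $\#\pi^{-1}(A) \geq \rho(A)^{-1/\alpha+\varepsilon}$), the same doubling of radii to obtain a fractal tree inside $E_\alpha(\omega)$ satisfying the separation hypothesis of Lemma~\ref{treelemma}, and the identical energy estimate with the three $\varepsilon$-losses summed over generations into a convergent geometric series. Nothing essential differs from the paper's argument.
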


\begin{proof}[Proof of Theorem~\ref{mainthm}]
By the definition of $\delta$ there is an $\varepsilon_0$ such that
for every $\varepsilon \in (0, \varepsilon_0)$, there is a
Borel set $A_1$ with positive $\mu$-measure such that
	\[
	\ldim{\mu}{x} > \frac{1}{\alpha} + \varepsilon
	\qquad\text{and}\qquad
	\delta_\mu(x) \leq \delta + \varepsilon
	\]
for all $x \in A_1$. Fix $\varepsilon$ and $A_1$.
Then there there is some $s > 1 / \alpha$ and a Borel set $A_2 \subset A_1$ with positive
$\mu$-measure such that $\ldim{\mu}{x} \in [s + \varepsilon, s + 2\varepsilon)$ for $x \in A_2$.
Thus by the definition of $\delta_\mu$,
	\[
	s + \varepsilon \leq \ldim{\mu}{x} \leq \udim{\mu}{x} \leq u
	\]
for $x \in A_2$, where $u = s + \delta + 3\varepsilon$. Next, there is
a positive number $c$ and a Borel set $A_3 \subset A_2$ with positive
$\mu$-measure, such that whenever $x \in A_3$ then
\[
\mu(\cball{x}{r}) \leq cr^s
\qquad\text{for all }
r > 0.
\]
Let $\mu_1 = \restr{\mu}{A_3} / \mu (A_3)$.

Consider an arbitrary Borel subset $A$ of $\rea^d$. If $A \cap A_3 =
\emptyset$ then $\mu_1 (A) = 0$. Otherwise there is some $x \in A \cap
A_3$, and
\[
\mu_1 (A) \leq \mu_1 (\cball{x}{|A|}) \leq
\frac{\mu(\cball{x}{|A|})}{\mu(A_3)} \leq \frac{c}{\mu(A_3)} |A|^s.
\]
This shows that $\mu_1$ is $(c_1, s)$-uniform, with
$c_1 = 2^s c / \mu(A_3)$.

By the Lebesgue--Besicovitch differentiation theorem
\cite[Section~1.7.1]{EvansGariepy}, there is a Borel set $A_4 \subset
A_3$ such that $\mu(A_4) = \mu(A_3)$ and
\[
\lim_{r \to 0} \frac{\mu(\cball{x}{r} \cap A_3)}{\mu(\cball{x}{r})} =
1
\]
for all $x \in A_4$. Thus for $\mu_1$-a.e.~$x$,
	\begin{align*}
	\udim{\mu_1}{x} &=
	\limsup_{r \to 0} \frac{\log \mu(\cball{x}{r} \cap A_3) - \log \mu(A_3)}{\log r} \\
	&=
	\limsup_{r \to 0} \frac{\log \mu(\cball{x}{r})}{\log r} = \udim{\mu}{x} \leq u.
	\end{align*}
By Lemma~\ref{restrlemma} and Lemma~\ref{almosttherelemma},
	\[
	f_\mu(\alpha) \geq
	f_{\mu_1} (\alpha + \varepsilon) \geq
	\frac{1}{\alpha + \varepsilon} + s - u =
	\frac{1}{\alpha + \varepsilon} - \delta - 3\varepsilon.
	\]
Letting $\varepsilon \to 0$ concludes the proof.
\end{proof}

\section{Proof of Proposition~\ref{pro:main}} \label{sec:proofprop}

\subsection{The lower bound in Proposition~\ref{pro:main}}

\begin{lemma} \label{lem:souslin}
  Let $\mu$ be a probability measure on $\rea^d$ and let $A$ be a
  Souslin set such that
  \[
  A \subset \{x; \, x \in E_\alpha (\omega) \text{ for }
  \P{\mu}\text{-a.e.}~\omega\}.
  \]
  Then
  \[
  f_\mu(\alpha) \geq \dimh A.
  \]
\end{lemma}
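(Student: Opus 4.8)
The plan is to bound $\dimh E_\alpha$ from below almost surely by producing, for each $t < \dimh A$, a single Frostman measure $\lambda$ that turns out to be carried by $E_\alpha(\omega)$ for $\P{\mu}$-almost every $\omega$. The delicate point that the Souslin hypothesis is designed to handle is that the $\P{\mu}$-null set of $\omega$ for which $x \notin E_\alpha(\omega)$ is allowed to depend on $x$; a Fubini (Tonelli) argument is what converts this ``for each $x$, a.e.~$\omega$'' statement into an ``a.e.~$\omega$, $\lambda$-a.e.~$x$'' statement about $\lambda(E_\alpha(\omega))$.

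First I would fix $t < \dimh A$, so that $\hmeas^t(A) > 0$. Since $A$ is Souslin, Frostman's lemma for Souslin sets (as in Mattila's book) provides a compact $K \subset A$, a Borel probability measure $\lambda$ with $\supp \lambda \subset K \subset A$, and a constant $C$ such that $\lambda(\oball{x}{r}) \leq C r^t$ for all $x \in \rea^d$ and all $r > 0$. This is the only place where the Souslin assumption on $A$ is used, and it is genuinely needed, since Frostman's lemma can fail for arbitrary sets.

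Next consider the set
\[
Z = \{(x, \omega) \in \rea^d \times \Omega ; \, x \in E_\alpha(\omega)\}.
\]
Because $x \in \cball{\omega_n}{n^{-\alpha}}$ is the jointly closed condition $|x - \omega_n| \leq n^{-\alpha}$ and each coordinate map $\omega \mapsto \omega_n$ is continuous, $Z$ is a Borel subset of $\rea^d \times \Omega$, being obtained from these sets by the countable unions and intersections defining $\limsup$. For fixed $x$ the section $Z_x = \{\omega; \, x \in E_\alpha(\omega)\}$ satisfies $\P{\mu}(Z_x) = 1$ whenever $x \in A$, by hypothesis, and since $\lambda(A) = 1$ this holds for $\lambda$-a.e.~$x$. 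Applying Tonelli's theorem to $\chi_Z$ then gives
\[
\int_\Omega \lambda(E_\alpha(\omega)) \intd{\P{\mu}}(\omega)
= (\lambda \times \P{\mu})(Z)
= \int_{\rea^d} \P{\mu}(Z_x) \intd\lambda(x) = 1.
\]
As $\lambda(E_\alpha(\omega)) \leq 1$ for every $\omega$, it follows that $\lambda(E_\alpha(\omega)) = 1$ for $\P{\mu}$-a.e.~$\omega$.

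Finally, for each such $\omega$ the set $E_\alpha(\omega)$ has positive (indeed full) $\lambda$-measure, so the growth estimate $\lambda(\oball{x}{r}) \leq C r^t$ and the mass distribution principle yield $\hmeas^t(E_\alpha(\omega)) \geq \lambda(E_\alpha(\omega)) / C > 0$, whence $\dimh E_\alpha(\omega) \geq t$. Thus $f_\mu(\alpha) \geq t$, and letting $t \uparrow \dimh A$ finishes the proof. I expect the main obstacle to be the careful justification of the measurability of $Z$ and of the interchange of quantifiers through Tonelli's theorem, together with the invocation of Frostman's lemma in the Souslin setting; the dimension estimate at the end is then routine.
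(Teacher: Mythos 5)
Your proof is correct and follows essentially the same route as the paper's: Frostman's lemma for Souslin sets produces the measure, a Fubini/Tonelli interchange shows it gives full mass to $E_\alpha(\omega)$ for $\P{\mu}$-a.e.\ $\omega$, and the mass distribution principle (which the paper phrases via $\udimh \nu$ and the fact that full-measure sets have dimension at least $\udimh\nu$) yields the dimension bound. The only differences are cosmetic: you parametrise by $t < \dimh A$ instead of $\varepsilon > 0$, and you spell out the Borel measurability of the set $\{(x,\omega);\, x \in E_\alpha(\omega)\}$, which the paper leaves implicit.
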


\begin{proof}
  Let $\varepsilon > 0$. Since $A$ is a Souslin set, Frostman's lemma
  implies that there is a probability measure $\nu$ such that $\nu(A)
  = 1$ and $\nu$ is $(c, s)$-uniform with $s = \dimh A - \varepsilon$.
In particular, $\udimh \nu \geq \dimh A - \varepsilon$. The expected
  value of $\nu(E_\alpha (\omega))$ with respect to $\P{\mu}$ is given by
  \begin{align*}
    \E\left(\nu(E(\omega))\right) &= \iint \chi_{E_\alpha (\omega)}(x) \,
    \mathrm{d}\nu(x) \, \mathrm{d}{\P{\mu}}(\omega) \\ &= \iint
    \chi_{E_\alpha (\omega)}(x) \, \mathrm{d}{\P{\mu}}(\omega) \,
    \mathrm{d}\nu(x) \\ &= \int \P{\mu}\left(\{ \omega \in \Omega; \, x
    \in E_\alpha (\omega) \}\right) \, \mathrm{d}\nu(x) = 1,
  \end{align*}
  and hence $\nu(E_\alpha (\omega)) = 1$ for
  $\P{\mu}$-a.e.~$\omega$. It follows that $\P{\mu}$-a.s.,
  \[
  \dimh E_\alpha (\omega) \geq \udimh \nu
	\geq \dimh A - \varepsilon.
  \]
  Letting $\varepsilon \to 0$ along a countable set concludes the proof.
\end{proof}

As an application of Lemma~\ref{lem:souslin}, we can now prove the
lower bound of $f_\mu (\alpha)$ in Proposition~\ref{pro:main}.

\begin{corollary} \label{cor:lowerbound}
For every $\alpha > 0$,
	\[
	\lim_{s \to 1/\alpha^-} F_\mu(s)
	\leq f_\mu(\alpha).
	\]
\end{corollary}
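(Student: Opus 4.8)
The plan is to produce, for each fixed $s<1/\alpha$, a Souslin set of Hausdorff dimension $F_\mu(s)$ to which Lemma~\ref{lem:souslin} applies, and then to let $s\uparrow 1/\alpha$. The natural candidate is the sub-level set $A_s=\{x\in\supp\mu;\,\ldim{\mu}{x}\le s\}$, which has $\dimh A_s=F_\mu(s)$ by definition and is Borel, hence Souslin, because $x\mapsto\ldim{\mu}{x}$ is a Borel function. By Lemma~\ref{lem:souslin} it therefore suffices to check that every $x\in A_s$ lies in $E_\alpha(\omega)$ for $\P{\mu}$-a.e.~$\omega$.

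Fix such an $x$. Since $x\in\cball{\omega_n}{n^{-\alpha}}$ exactly when $\omega_n\in\cball{x}{n^{-\alpha}}$, the point $x$ belongs to $E_\alpha(\omega)$ precisely when infinitely many of the independent events $\{\omega;\,\omega_n\in\cball{x}{n^{-\alpha}}\}$, of probabilities $\mu(\cball{x}{n^{-\alpha}})$, occur. By the second Borel--Cantelli lemma this happens for $\P{\mu}$-a.e.~$\omega$ as soon as $\sum_n\mu(\cball{x}{n^{-\alpha}})=\infty$, so the whole corollary reduces to verifying this divergence.

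To see the divergence, choose $\varepsilon>0$ with $s+\varepsilon<1/\alpha$. Because $\ldim{\mu}{x}\le s<s+\varepsilon$, there is a sequence $r_j\downarrow 0$ along which $\mu(\oball{x}{r_j})\ge r_j^{\,s+\varepsilon}$. For every integer $n$ in the block $[\tfrac12 r_j^{-1/\alpha},\,r_j^{-1/\alpha}]$ one has $n^{-\alpha}\ge r_j$, hence $\mu(\cball{x}{n^{-\alpha}})\ge\mu(\oball{x}{r_j})\ge r_j^{\,s+\varepsilon}$; summing over the roughly $\tfrac12 r_j^{-1/\alpha}$ integers in the block contributes at least a constant times $r_j^{\,s+\varepsilon-1/\alpha}$, which tends to infinity since the exponent is negative. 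Passing to a subsequence for which the blocks are pairwise disjoint, the series is bounded below by a sum of such contributions and therefore diverges. This is the only substantial point: extracting the slow-decay radii $r_j$ from the hypothesis $\ldim{\mu}{x}\le s$ and arranging the block estimate so that the negative exponent forces divergence is the crux, while the rest is bookkeeping.

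Putting the pieces together, $A_s\subset\{x;\,x\in E_\alpha(\omega)\text{ for }\P{\mu}\text{-a.e.}~\omega\}$, so Lemma~\ref{lem:souslin} gives $f_\mu(\alpha)\ge\dimh A_s=F_\mu(s)$ for every $s<1/\alpha$. Since $F_\mu$ is non-decreasing in $s$, letting $s\uparrow 1/\alpha$ yields $f_\mu(\alpha)\ge\lim_{s\to 1/\alpha^-}F_\mu(s)$, as required.
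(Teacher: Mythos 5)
Your proposal is correct and follows essentially the same route as the paper: apply Lemma~\ref{lem:souslin} to the sub-level set of $\ldimm{\mu}$ and verify via the second Borel--Cantelli lemma that each of its points lies in $E_\alpha(\omega)$ almost surely, by grouping the indices $n$ into blocks determined by radii where $\mu(\oball{x}{\cdot})$ is large. The only difference is bookkeeping---you exploit the strict gap $s+\varepsilon<1/\alpha$ to make each block's contribution blow up, whereas the paper works with exponent $1/\alpha$ and a sparseness condition on the radii so that each block contributes at least a fixed constant.
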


\begin{proof}
  By Lemma~\ref{lem:souslin} it suffices to show that if
  $\ldim{\mu}{x} < 1 / \alpha$, then $x \in E_\alpha (\omega)$
  $\P{\mu}$-a.s. So assume that $\ldim{\mu}{x} < 1 / \alpha$. Then there
  is a positive constant $C$ and a sequence $(r_k)_{k = 1}^\infty$
  converging to $0$ such that $r_{k + 1}^{1 / \alpha}
  \ceil{r_k^{-1/\alpha}} \leq 1 / 2$ and
  \[
  \mu(B(x, r_k)) \geq C r_k^{1 / \alpha}
  \]
  for every $k$.

  If $n \leq r_{k + 1}^{-1/\alpha}$ then
  \[
  \P{\mu} \left(x \in B (\omega_n, n^{-\alpha})\right) = \mu(B(x, n^{-\alpha}))
  \geq \mu(B(x, r_{k + 1})) \geq C r_{k + 1}^{1 / \alpha},
  \]
  and thus
  \[
  \sum_{n = 1}^\infty \P{\mu} \left(x \in B (\omega_n, n^{-\alpha})\right) \geq C
  \sum_{k = 1}^\infty r_{k + 1}^{1 / \alpha} \left( \floor{r_{k +
      1}^{-1/\alpha}} - \ceil{r_k^{-1/\alpha}} \right).
  \]
  For all large enough $k$, the $k$:th term in the sum is greater than
  or equal to $1 / 3$, and hence the sum diverges. It then follows by
  the Borel--Cantelli lemma that $\P{\mu}$-a.s., $x \in E_\alpha (\omega)$.
\end{proof}

\subsection{The upper bound in Proposition~\ref{pro:main}}

\begin{proposition}	\label{pro:ubound1}
For every $\alpha > 0$,
	\[
	f_\mu (\alpha) \leq \max\left(F_\mu(1/\alpha), \,
	1 / \alpha + \sup_{s \geq 1 / \alpha} (G_\mu(s) - s) \right).
	\]
\end{proposition}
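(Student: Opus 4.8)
The plan is to produce an explicit efficient cover of $E_\alpha(\omega)$ for $\P{\mu}$-a.e.~$\omega$, using the balls themselves as covering sets but splitting them according to the local behaviour of $\mu$ at their centres. Fix $\alpha$ and write $\sigma = 1/\alpha$. I would first observe that for any point $x \in E_\alpha(\omega)$, there are infinitely many $n$ with $x \in \cball{\omega_n}{n^{-\alpha}}$, so the tails $\{\cball{\omega_n}{n^{-\alpha}}\}_{n \geq k}$ cover $E_\alpha(\omega)$. The idea is to partition the index set according to whether $\omega_n$ lies in a region where $\mu$ has small lower local dimension or large one. Concretely, split the centres into those $x$ with $\ldim{\mu}{x} \leq \sigma$ and those with $\ldim{\mu}{x} > \sigma$. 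The first collection of centres lies in the set whose Hausdorff dimension is $F_\mu(\sigma)$, and I would cover the corresponding part of $E_\alpha(\omega)$ directly by an economical cover of that sublevel set (refined to scale $n^{-\alpha}$), contributing at most $F_\mu(\sigma)$ to the dimension.

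**The coarse-spectrum term.** The harder part is the centres with $\ldim{\mu}{x} > \sigma$, which must be handled via the coarse counting function $N_r$. Here the plan is to fix a mesh scale $r$ and, for each dyadic-type value $s \geq \sigma$, count the cubes $Q$ of side $r$ with $\mu(Q) \geq r^{s}$ but $\mu(Q) < r^{s'}$ for the next threshold; by definition of $G_\mu$, the number of such cubes is at most $r^{-(G_\mu(s) + \varepsilon)}$ for small $r$. I would then use a Borel--Cantelli / first-moment argument: the expected number of the first $N$ points $\omega_n$ landing in a fixed cube $Q$ with $\mu(Q) \approx r^{s}$ is $N\mu(Q)$, and by choosing the relevant range of indices $n$ so that $n^{-\alpha} \approx r$, i.e.~$n \approx r^{-1/\alpha} = r^{-\sigma}$, the number of balls of radius $\approx r$ centred in such a cube is typically $\approx r^{-\sigma} r^{s} = r^{s - \sigma}$. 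Summing the $r^{s}$-sized contributions of these balls over all relevant cubes gives a Hausdorff-$t$ sum of order
\[
r^{-(G_\mu(s) + \varepsilon)} \cdot r^{s - \sigma} \cdot (r)^{t},
\]
where the factor $r^{s-\sigma}$ bounds the number of balls per cube and $r^{t}$ comes from using each ball (of radius $\approx r$) as a covering set in the definition of $\hmeas^t$. This sum stays bounded as $r \to 0$ precisely when $t \geq G_\mu(s) - (s - \sigma) + \varepsilon = \sigma + (G_\mu(s) - s) + \varepsilon$, and taking the supremum over $s \geq \sigma$ yields the bound $\sigma + \sup_{s \geq \sigma}(G_\mu(s) - s)$.

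**Combining and the main obstacle.** Taking the maximum of the two contributions gives $f_\mu(\alpha) \leq \max\bigl(F_\mu(\sigma),\, \sigma + \sup_{s \geq \sigma}(G_\mu(s) - s)\bigr)$, which is the claimed bound. To make the probabilistic counting rigorous I would invoke a dyadic blocking of the indices $n$ into ranges $[n_k, 2n_k)$ (mirroring Lemma~\ref{Wlemma}), within each block using that the number of the $\approx n_k$ sample points falling in a cube of $\mu$-measure $m$ is, with overwhelming probability, at most a constant multiple of $\max(n_k m,\, 1)$; this is exactly the deviation control that Lemma~\ref{Wlemma} and Lemma~\ref{deviationlemma} provide. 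The main obstacle I anticipate is the bookkeeping for cubes of intermediate measure and the passage from the limsup-over-$r$ definition of $G_\mu$ to a statement valid simultaneously for all small dyadic scales along the chosen sequence $n_k$: one must ensure that the $\varepsilon$-thresholds in $G_\mu$, the exponential blocking of indices, and the Borel--Cantelli summability all line up so that a single $\P{\mu}$-a.e.~$\omega$ works for the cover at every scale. Handling the supremum over the continuum of values $s \geq \sigma$ requires discretising $s$ into finitely many thresholds per scale and controlling the accumulated error, after which one lets $\varepsilon \to 0$ along a countable set to conclude.
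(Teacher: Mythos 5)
Your second half---the coarse-spectrum counting---is essentially the paper's own argument: group the indices into blocks where $k^{-\alpha}\approx 2^{-n}$, count the cubes at scale $2^{-n}$ whose measure is pinned in a window around $2^{-sn}$ (the pinning from above \emph{and} below is what ties the count to $G_\mu(s)$ and simultaneously keeps the expected number of hits small), apply Markov's inequality and Borel--Cantelli, sum the $\hmeas^t$-contributions over scales, and discretise $s$ into the values $1/\alpha+j\varepsilon$; also, the ``main obstacle'' you anticipate there is not a real one, since the $\limsup$ in the definition of $G_\mu$ only needs to furnish eventual upper bounds. The genuine gap is in your first half. You partition the \emph{centres} by whether $\ldim{\mu}{\omega_n}\leq 1/\alpha$, and claim that the piece $\limsup_{n;\,\ldim{\mu}{\omega_n}\leq 1/\alpha}\cball{\omega_n}{n^{-\alpha}}$ has dimension at most $F_\mu(1/\alpha)$, by covering the sublevel set $S=\{x;\,\ldim{\mu}{x}\leq 1/\alpha\}$ economically and ``refining to scale $n^{-\alpha}$''. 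This is false as a principle: the dimension of a limsup of shrinking balls is not controlled by the dimension of the set of centres. An economical cover $\{U_i\}$ of $S$ gives no useful cover of the balls, because each ball protrudes from its $U_i$ by $n^{-\alpha}$, which is unrelated to (and typically much larger than) $|U_i|$, and fattening a cover can increase dimension; indeed a limsup of balls of radius $n^{-\alpha}$ centred at the points of a countable dense set (dimension $0$) typically has positive dimension---the Jarn\'ik--Besicovitch phenomenon, which is exactly what makes lower bounds in this subject nontrivial. The underlying mistake is that $\ldim{\mu}{\omega_n}$ is an asymptotic (liminf) quantity, so it carries no information about $\mu$ near $\omega_n$ at the one scale $n^{-\alpha}$ relevant for that ball.

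The paper sidesteps this entirely by decomposing $E_\alpha(\omega)$ according to the behaviour of $\mu$ at the \emph{points} of $E_\alpha(\omega)$ rather than at the centres: the piece $E_\alpha(\omega)\cap\{x;\,\ldim{\mu}{x}\leq 1/\alpha\}$ is a subset of the sublevel set, so the bound $F_\mu(1/\alpha)$ follows from monotonicity alone, with no covering argument at all; the remaining points are distributed among the sets $A_\varepsilon(s_j)=\limsup_n\{x;\,2^{-(s_j+\varepsilon)n}\leq\mu(D_n(x))\leq 2^{-(s_j-\varepsilon)n}\}$ with $s_j=1/\alpha+j\varepsilon$, where $D_n(x)$ denotes the dyadic cube of side $2^{-n}$ containing $x$, and only on those sets is the counting argument run (with the covering cubes being the tripled cubes $\hat D$ around cubes that contain some $\omega_k$ with $k^{-\alpha}\approx 2^{-n}$). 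If you prefer centre-based bookkeeping, the repair is to classify each index $n$ by the $\mu$-measure of the dyadic cube of side comparable to $n^{-\alpha}$ containing $\omega_n$: cubes of measure at most $(n^{-\alpha})^{1/\alpha}$ fall within the scope of your counting, while any point $x$ hit infinitely often by balls whose centre-cube is heavier than this automatically satisfies $\ldim{\mu}{x}\leq 1/\alpha$, since $\oball{x}{(2+\sqrt{d}\,)n^{-\alpha}}$ contains that cube---so this part lands inside the sublevel set after all. Either way, the $F_\mu$ term must come from monotonicity of the Hausdorff dimension, not from a covering of the sublevel set.
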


\begin{proof}
  We will use the same method to prove the statement as was used by
  Seuret \cite[Proposition~5]{Seuret}.

Let
  \[
  \mathscr{D}_n = \{ [k_1 2^{-n}, (k_1+1) 2^{-n}) \times \cdots
    \times [k_d 2^{-n}, (k_d+1) 2^{-n} ) ; \, k_j \in \ints \},
  \]
and given a point $x$ and an integer $n$, let $D_n (x)$ be the
  unique $D \in \mathscr{D}_n$ such that $x \in D$.
Let $s > 0$ and $\varepsilon \in (0, \varepsilon_0)$, where $\varepsilon_0$
is a positive number that will be specified. Let
  \begin{align*}
    A_\varepsilon(n,s) &=
	\left\{ x ; \, 2^{-(s+\varepsilon)n} \leq \mu(D_n (x))
    \leq 2^{-(s-\varepsilon)n} \right\}, \\
	A_\varepsilon(s) &= \limsup_{n\to\infty} A_\varepsilon(n,s).
  \end{align*}
If $x \in A_\varepsilon(s)$ then
	\[
	\mu \left(\oball{x}{\sqrt{d}2^{-n}}\right)
	\geq \mu(D_n(x)) \geq 2^{-(s+\varepsilon) n}
	\]
for infinitely many $n$, so that $\ldim{\mu}{x} \leq s+\varepsilon$. Thus
  \[
  A_\varepsilon(s) \subset \left\{ x ; \, \ldim{\mu}{x} \leq s + \varepsilon \right\}.
  \]

Recall that the coarse spectrum is defined as
  \[
  G_\mu (s) = \lim_{\varepsilon \to 0} \limsup_{r \to 0}
  \frac{\log (N_r (s + \varepsilon) - N_r (s - \varepsilon))}{- \log
    r},
  \]
  where $N_r (s)$ denotes the number of $d$-dimensional cubes of
  the form
  \[
  Q = [k_1 r, (k_1+1) r) \times \ldots \times [k_d r, (k_d+1) r)
  \]
  with $k_1, \ldots, k_d \in \mathbf{Z}$ and $\mu (Q) \geq r^s$.
Let $M_\varepsilon (n, s)$ be the number of dyadic cubes in
$\mathscr{D}_n$ that are included in $A_\varepsilon(n, s)$.
Then
  \[
  G_\mu (s) = \lim_{\varepsilon \to 0} \limsup_{n \to \infty}
  \frac{\log M_\varepsilon (n,s)}{ n \log 2},
  \]
so given $\gamma > 0$, it is possible to choose $\varepsilon_0$
such that
  \[
  M_\varepsilon (n,s) \leq 2^{(G_\mu (s) + \gamma)n}
  \]
for all $\varepsilon \in (0, \varepsilon_0)$ and all
$n \geq n_0(\varepsilon)$.

Let
  \[
  K(n) = \left\{ k ; \, 2^{n/\alpha} \leq k < 2^{(n+1)/\alpha} \right\}
  \]
and let $\tilde{\mathscr{D}}_n$ denote the set of dyadic cubes
$D$ in $\mathscr{D}_n$ such that $\omega_k \in D$ for some $k \in K(n)$
and $D \subset A_\varepsilon(n, s)$. Then for every $m$,
	\begin{equation} \label{covereq}
	E_\alpha(\omega) \cap A_\varepsilon(s) \subset
	\bigcup_{n \geq m} \bigcup_{D \in \tilde{\mathscr{D}}_n} \hat D,
	\end{equation}
where $\hat D$ is the cube that is concentric with $D$ and has $3$ times
the sidelength of $D$, using that if $k \in K(n)$ and
$\omega_k \in D \in \tilde{\mathscr{D}}_n$ then
$\cball{\omega_k}{k^{-\alpha}} \subset \hat D$. Since
  \[
  \mu\left(A_\varepsilon(n,s)\right) \leq M_\varepsilon (n,s) 2^{-(s-\varepsilon)n},
  \]
the expected number of dyadic cubes in $\tilde{\mathscr{D}}_n$ satisfies
  \begin{align*}
    \E\big(\# \tilde{\mathscr{D}}_n\big) \leq \mu (A_\varepsilon(n,s))
    \# K(n) & \leq M_\varepsilon (n,s) 2^{-(s-\varepsilon)n}
    2^{(n+1)/\alpha} \\ & \leq 2^{1/\alpha} 2^{(1/\alpha - s + G_\mu
      (s) + \varepsilon + \gamma)n}.
  \end{align*}
Thus by Markov's inequality,
  \[
  \P{\mu} \left(\# \tilde{\mathscr{D}}_n \geq 2^{(1/\alpha - s +
    G_\mu (s) + 2 \varepsilon + \gamma)n}\right) \leq \frac{\E \big(\#
    \tilde{\mathscr{D}}_n\big)}{ 2^{(1/\alpha - s + G_\mu (s) +
      2 \varepsilon + \gamma)n}} \leq 2^{1/\alpha} 2^{-\varepsilon n}.
  \]
  It follows by the Borel--Cantelli lemma that almost surely, there is
  an $N$ such that for each $n > N$, the number of dyadic cubes in
  $\tilde{\mathscr{D}}_n$ satisfies
  \begin{equation} \label{eq:numberofcubes}
  \# \tilde{\mathscr{D}}_n \leq 2^{(1/\alpha - s + G_\mu
    (s) + 2 \varepsilon + \gamma)n}.
  \end{equation}
  
Assume that there is such an $N$. 
Let $\delta > 0$ and choose $m \geq N$ such that
$3 \sqrt{d} 2^{-m} \leq \delta$. Then the cubes used to cover
$E_\alpha (\omega) \cap A_\varepsilon(s)$ in~\eqref{covereq} have diameters
less than $\delta$, and thus
  \begin{align*}
    \hmeas_\delta^t\left(E_\alpha (\omega) \cap
    A_\varepsilon(s)\right) \leq \sum_{n \geq m} \sum_{D \in
      \tilde{\mathscr{D}}_n} |\hat D |^t &\leq \sum_{n \geq m} 2^{(1/\alpha
      - s + G_\mu (s) + 2 \varepsilon + \gamma)n} \cdot 3^t d^{\,t / 2}
    2^{-nt} \\ &= 3^t d^{\,t / 2} \sum_{n \geq m} 2^{(1/\alpha - s + G_\mu
      (s) - t + 2 \varepsilon + \gamma)n}.
  \end{align*}
The series converges for $t > 1/\alpha - s + G_\mu (s) + 2 \varepsilon + \gamma$,
and it follows that
	\begin{equation} \label{eq:dimEAs}
	\dimh \left(E_\alpha (\omega) \cap A_\varepsilon(s)\right)
	\leq
	1 / \alpha + (G_\mu (s) - s) + 2 \varepsilon + \gamma.
	\end{equation}
Since almost surely there is an $N$ such that \eqref{eq:numberofcubes} holds,
\eqref{eq:dimEAs} holds almost surely for fixed $s$, $\gamma$ and $\varepsilon$.

  Let $s_j = 1/\alpha + j \varepsilon$ and write
  \[
  E_\alpha (\omega) = (E_\alpha (\omega) \cap \{ x ; \, \ldim{\mu}{x}
  \leq 1/\alpha \}) \cup \bigcup_{j = 1}^\infty (E_\alpha (\omega)
  \cap A_\varepsilon(s_j)).
  \]
Then by~\eqref{eq:dimEAs},
	\begin{align*}
	\dimh E_\alpha (\omega) &\leq \max \Bigl\{ F_\mu(1 / \alpha),
        \, \max_j \dimh (E_\alpha (\omega) \cap A_\varepsilon(s_j))
        \Bigr\} \\ &\leq \max\Bigl\{ F_\mu(1 / \alpha), \, 1 / \alpha +
        \max_j (G_\mu (s_j) - s_j) + 2 \varepsilon + \gamma \Bigr\},
	\end{align*}
and letting $\varepsilon \to 0$ and then $\gamma \to 0$ concludes the
proof.
\end{proof}

\section{Proof of Proposition~\ref{pro:alternative}} \label{sec:alternative}

If $K$ is a subset of $\rea^d$ we let $K_r$ denote the $r$-fattening of $K$,
that is,
\[
K_r = \{x; \, \dist(x, K) \leq r \}.
\]

\begin{lemma} \label{udimblemma}
  Let $\beta$, $c$, $s$ and $\varepsilon$ be positive numbers, let $K$
  be a subset of $\rea^d$ such that\/ $\udimb K = \beta$ and let $\mu$
  be a measure on $\rea^d$ such that for every $x \in K$ and
  every $r > 0$, it holds that $\mu\left(\cball{x}{r}\right) \leq c
  r^s$.  Then there is a constant $C$ such that for every $r > 0$,
  \[
  \mu(K_r) \leq C r^{s - (\beta + \varepsilon)}.
  \]

\begin{proof}
  For $r > 0$, let $N(r)$ be the minimal number of closed balls of
  radius $r$ needed to cover $K$. By the definition of the upper box
  dimension, there is then a constant $\gamma$ such that $N(r) \leq
  \gamma r^{-(\beta + \varepsilon)}$ for every $r > 0$.

  Fix $r > 0$ and let $y_1, \ldots, y_{N(r / 2)}$ be points in
  $\rea^d$ such that the balls $\left\{\cball{y_i}{r / 2}\right\}_{i =
    1}^{N(r / 2)}$ cover $K$. Then every $\cball{y_i}{r / 2}$
  intersects $K$, since otherwise $K$ could be covered by $N(r / 2) -
  1$ balls of radius $r / 2$, contradicting the minimality of $N(r /
  2)$. Thus there are $x_1, \ldots, x_{N(r / 2)} \in K$ such that
  \[
  K \subset \bigcup_{i = 1}^{N(r / 2)} \cball{x_i}{r},
  \qquad\text{and hence}\qquad
  K_r \subset \bigcup_{i = 1}^{N(r / 2)} \cball{x_i}{2r}.
  \]
  It follows that
  \[
  \mu(K_r) \leq N(r / 2) \cdot c (2r)^s \leq
  2^{s + \beta + \varepsilon} c\gamma r^{s - (\beta + \varepsilon)}.
  \qedhere
  \]
\end{proof}
\end{lemma}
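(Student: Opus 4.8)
The plan is to combine the definition of upper box dimension with the pointwise measure hypothesis, the latter forcing us to work exclusively with balls centred at points of $K$. Since $\udimb K = \beta$ is finite, $K$ is totally bounded, and the minimal number $N(r)$ of closed balls of radius $r$ needed to cover $K$ satisfies $\limsup_{r \to 0} \log N(r) / (-\log r) = \beta$. Hence for the given $\varepsilon$ there is a constant $\gamma$ with $N(r) \leq \gamma r^{-(\beta + \varepsilon)}$ for all $r > 0$, enlarging $\gamma$ if necessary to absorb the finitely many large scales at which a single ball already covers the bounded set $K$.

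First I would fix $r > 0$ and take a minimal cover of $K$ by $N(r/2)$ balls of radius $r/2$, say with centres $y_1, \ldots, y_{N(r/2)}$. Passing to radius $r/2$ is what allows \emph{re-centring}: by minimality every ball $\cball{y_i}{r/2}$ must meet $K$, since otherwise it could be discarded, so I may select $x_i \in K \cap \cball{y_i}{r/2}$. Then $\cball{y_i}{r/2} \subset \cball{x_i}{r}$, so the balls $\{\cball{x_i}{r}\}$ still cover $K$ but now have their centres in $K$. This is the essential step, because the hypothesis $\mu(\cball{x}{r}) \leq c r^s$ is available only at points $x \in K$.

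Next I would pass from $K$ to its fattening $K_r$. If $\dist(x, K) \leq r$ then $|x - z| \leq r$ for some $z \in K$, and $z \in \cball{x_i}{r}$ for some $i$, so $|x - x_i| \leq 2r$; thus $K_r \subset \bigcup_i \cball{x_i}{2r}$. Applying the measure hypothesis at each $x_i \in K$ gives $\mu(\cball{x_i}{2r}) \leq c(2r)^s$, and summing over the at most $\gamma (r/2)^{-(\beta + \varepsilon)}$ balls yields
\[
\mu(K_r) \leq N(r/2) \cdot c (2r)^s \leq 2^{s + \beta + \varepsilon} c \gamma \, r^{s - (\beta + \varepsilon)},
\]
so one may take $C = 2^{s + \beta + \varepsilon} c \gamma$.

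I expect the only real subtlety to be the bookkeeping with the two factors of $2$ — one from covering at scale $r/2$ so that the re-centred balls of radius $r$ still cover $K$, and one from the fattening, which forces balls of radius $2r$ — together with the remark that the covering estimate must be arranged to hold at all scales $r > 0$, not merely as $r \to 0$. All the conceptual content lies in the re-centring step; the remainder is just the definition of $\udimb$ and a sum of $N(r/2)$ equal terms.
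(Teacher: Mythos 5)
Your proof is correct and follows essentially the same route as the paper's: cover $K$ minimally at scale $r/2$, re-centre the balls at points of $K$ (justified by minimality) so the measure hypothesis applies, pass to radius $2r$ to swallow the fattening, and sum, arriving at the same constant $C = 2^{s+\beta+\varepsilon}c\gamma$. Your added remarks --- enlarging $\gamma$ to make the covering bound hold at all scales, and the explicit triangle-inequality check that $K_r \subset \bigcup_i \cball{x_i}{2r}$ --- are details the paper leaves implicit, not a different argument.
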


We can now prove Proposition~\ref{pro:alternative}. The proof is based
on Lemma~\ref{udimblemma}, but is otherwise very similar to the proof
of Proposition~\ref{pro:ubound1}.

\begin{proof}[Proof of Proposition~\ref{pro:alternative}]
For $a \leq b$ let
	\[
	A_a^b = \{x \in \rea^d; \, a \leq \ldim{\mu}{x} \leq b\}.
	\]
Let $\varepsilon > 0$ and $s_j = 1 / \alpha + j\varepsilon$ for $j = 0, 1, \ldots$
Then,
	\[
	E_\alpha(\omega) = \Big(E_\alpha(\omega) \cap A_0^{s_0}\Big) \cup
	\Big(\bigcup_{j = 0}^\infty
	E_\alpha(\omega) \cap A_{s_j}^{s_{j + 1}}\Big),
	\]
and thus
	\begin{equation}	\label{maxineq}
	\dimh E_\alpha(\omega) \leq
	\max\left(H_\mu(1 / \alpha), \,
	\max_{j \geq 0} \dimh E_\alpha(\omega) \cap A_{s_j}^{s_{j + 1}}
	\right).
	\end{equation}
It will be shown for $s \geq 1 / \alpha$ that for $\P{\mu}$-a.e.~$\omega$,
	\begin{equation} \label{aimineq}
	\dimh \left(E_\alpha(\omega) \cap A_{s}^{s + \varepsilon}\right)
	\leq H_\mu(s + \varepsilon) + \frac{1}{\alpha} - s + 4
	\varepsilon;
	\end{equation}
letting $\varepsilon \to 0$ in~\eqref{maxineq} then proves the proposition,
using that $\bar H_\mu = \tilde H_\mu$ since $H_\mu$ is increasing.
  
  Fix $s \geq 1 / \alpha$ and $\varepsilon > 0$ and let $\beta =
  H_\mu (s + \varepsilon) + \varepsilon$.  The set $A_s^{s +
    \varepsilon}$ can be decomposed as
  \[
  A_s^{s + \varepsilon} = \bigcup_{m = 1}^\infty K_m,	
  \]
  where
  \[
  K_m = \left\{x \in A_s^{s + \varepsilon}; \, \mu(\cball{x}{r})
  \leq mr^{s - \varepsilon}
  \text{ for all } r > 0\right\},
  \]
  and each $K_m$ can be decomposed as
  \[
  K_m = \bigcup_{i = 1}^\infty K_m^i,
  \]
  where $\udimb K_m^i \leq \beta$ for all $i$ (using that $K_m
  \subset A_0^{s + \varepsilon}$ and~\eqref{dimpeq}).

  Fix $m$ and $i$, and for $\omega \in \Omega$ let
  \[
  G(\omega) =
  \left\{k ; \, \cball{\omega_k}{k^{-\alpha}} \cap K_m^i \neq
  \emptyset\right\}
  \]
  and let $\{k_n(\omega)\}_{n = 1}^\infty$ be the strictly increasing
  enumeration of $G(\omega)$. By Lemma~\ref{udimblemma}, there is a
  constant $C_1$ such that for every $r > 0$,
  \[
  \mu\left((K_m^i)_r\right) \leq C_1 r^{s - \beta - 2\varepsilon},
  \]
  so that
  \[
  \E\big(\# G(\omega) \cap [1, 2^n] \big)
  \leq C_2 2^{(1 - \alpha(s - \beta - 2\varepsilon))n}
  \]
  for some constant $C_2$ and all $n$. Thus by Markov's inequality,
  \[
  \P{\mu}\left(\left\{\omega; \, \# G(\omega) \cap [1, 2^n] \geq 2^{(1 -
    \alpha(s - \beta - 3\varepsilon))n}\right\}\right) \leq C_2
  2^{-\varepsilon n}.
  \]
  The Borel--Cantelli lemma then implies that for
  $\P{\mu}$-a.e.~$\omega$ there is a constant $C_3 = C_3(\omega)$
  such that
  \[
  \# G(\omega) \cap [1, 2^n] \leq C_3 2^{(1 - \alpha(s - \beta -
    3\varepsilon))n}
  \]
  for all $n$. Thus for a.e.~$\omega$ there is a constant $C_4 =
  C_4(\omega)$ such that
  \[
  k_n(\omega) \geq C_4 n^{1 / (1 - \alpha(s - \beta - 3\varepsilon))},
  \]
  or equivalently,
  \[
  k_n(\omega)^{-\alpha} \leq C_4^{-\alpha} n^{-\alpha / (1 -
    \alpha(s - \beta - 3\varepsilon))}
  \]
  for all $n$. For such $\omega$, the trivial upper bound for
  the Hausdorff dimension gives
  \begin{align*}
    \dimh \left(E_\alpha(\omega) \cap K_m^i \right)&= \dimh
    \left(\limsup_n \cball{x}{k_n(\omega)^{-\alpha}}\right)\\ &\leq
    \frac{1 - \alpha(s - \beta - 3\varepsilon)}{\alpha} =
    \frac{1}{\alpha} - s + H_\mu (s + \varepsilon) + 4\varepsilon.
  \end{align*}
  The inequality~\eqref{aimineq} follows by the countable stability of
  the Hausdorff dimension.
\end{proof}

\section{Proofs to Example~\ref{example}} \label{sec:example}

\subsection{Some lemmas}
In this section the analogue of $f_\mu$ in a more general setting will
be considered. Let $X$ be a separable metric space (separable in order
to ensure that measures have well-defined supports), let $\seq r =
(r_k)_{k = 1}^\infty$ be a sequence of positive numbers and let $\seq
\mu = (\mu_k)_{k = 1}^\infty$ be a sequence of probability measures on
$X$. Let $\Omega = X^\nat$ and let $\P{\seq \mu} = \times_{k =
  1}^\infty \mu_k$.  For $\omega \in \Omega$, let
\[
E_{\seq r}(\omega) = \limsup_k \cball{\omega_k}{r_k}.
\]
As before, $\dimh E_{\seq r}(\omega)$ is
$\P{\seq\mu}$-a.s.~constant---denote the a.s.~value by
$f_{\seq \mu}(\seq r)$. If $\seq \mu = (\mu)$ is a constant sequence,
then $f_{\seq \mu}$ will also be denoted by $f_\mu$, and if
$\seq r = (k^{-\alpha})_{k = 1}^\infty$ then $f_{\seq \mu}(\seq r)$
will also be denoted by $f_{\seq \mu}(\alpha)$.

Let $\Sigma = \{0, 1\}^\nat$, and if $p$ is a number in $[0, 1]$, let
$\P{p}$ be the probability measure on $\Sigma$ given by
$\P{p} = \times_{k = 1}^\infty \left((1 - p) \delta_0 + p \delta_1\right)$.

In the proofs of Lemma~\ref{posproblemma} and \ref{convcomblemma} below,
the only properties of the Hausdorff dimension that are used are
that it is monotone,
  \[
  A \subset B \quad \Rightarrow \quad \dimh A \leq \dimh B,
  \]
and that it is finitely stable,
  \[
  \dimh \bigcup_{k=1}^n A_k = \max \left\{ \dimh A_k ; \, 1 \leq k \leq n \right\}.
  \]
Hence the statements of these lemmas, and also Lemma~\ref{problemma}, are
still true if the Hausdorff dimension is replaced by any other ``dimension''
that has these properties, for instance the packing dimension.

\begin{lemma} \label{posproblemma}
  Let $(A_k)_{k = 1}^\infty$ be a sequence of subsets of $X$ and for
  $\sigma \in \Sigma$, let
  \[
  A(\sigma) = \limsup_{\sigma_k = 1} A_k.
  \]
  Let $p \in (0, 1]$. Then for $\P{p}$-a.e.~$\sigma$,
  \[
  \dimh A(\sigma) = \dimh A,
  \]
  where $A = \limsup_k A_k$.

  \begin{proof}
    Let $H = (\rea / \ints)^\nat$ and let $\Q$ be the product measure
    on $H$ that projects to Lebesgue measure on each component. Define
    the map $\pi \colon H \to \Sigma$ by
    \[
    \pi(\eta)_k =
    \begin{cases}
      1 & \text{if } \eta_k \in [0, p) \\
	0 & \text{otherwise}.
    \end{cases}
    \]
    Then $\P{p} = \Q \circ \pi^{-1}$, and thus it suffices to show that
    $\dimh A(\pi(\eta)) = \dimh A$ for $\Q$-a.e.~$\eta$. The
    inequality $\leq$ holds for \emph{every} $\eta$ since
    $A(\pi(\eta)) \subset A$.

    To see the opposite inequality, let $N$ be a natural number such
    that $Np \geq 1$. Fix $\eta$ and a point $x \in X$, and for $i =
    0, \ldots, N - 1$, let
    \[
    K_i = \{ k; \, x \in A_k \text{ and } \eta_k \in [0, p) + i / N\}.
    \]
    Then $x$ lies in $A$ if any only if $\bigcup_{i = 0}^{N - 1} K_i$
    is infinite, and this happens if and only if there is some $i$
    such that $K_i$ is infinite. Thus
    \[
    A = \bigcup_{i = 0}^{N - 1}
    \limsup \{A_k; \, \eta_k \in [0, p) + i / N \}
      = \bigcup_{i = 0}^{N - 1}
      A\big(\pi(\eta - (i / N)^\infty)\big),
    \]
    where the addition in the last expression is coordinate-wise in
    $\rea / \ints$ and $(i / N)^\infty$ denotes the constant sequence
    with all entries equal to $i / N$.  Hence by the finite stability
    of the Hausdorff dimension, there is for every $\eta$ some $i$
    such that
    \[
    \dimh A\left(\pi(\eta - (i / N)^\infty)\right) = \dimh A,
    \]
    and it follows that
    \[
    \sum_{i = 0}^N \chi_G(\eta - (i / N)^\infty) \geq 1,
    \]
    where
    \[
    G = \left\{ \eta \in H; \, \dimh A(\pi(\eta)) \geq \dimh A \right\}.
    \]
    Integrating this, and using the translation invariance of $\Q$,
    gives
    \[
    1 \leq \sum_{i = 0}^N \int \chi_G(\eta - (i / N)^\infty) \,
    \intd\Q (\eta) = N \int \chi_G(\eta) \, \intd\Q(\eta) = N \Q(G).
    \]
    This shows that the event $\dimh A(\pi(\eta)) \geq \dimh A$ has
    positive $\Q$-proba\-bility, and since it is a tail event it must
    then have full probability.
  \end{proof}
\end{lemma}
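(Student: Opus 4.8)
The plan is to prove the two inequalities separately. The inequality $\dimh A(\sigma) \leq \dimh A$ is immediate and holds for \emph{every} $\sigma$, since thinning the index set only shrinks the limsup: $A(\sigma) \subseteq A$. The content of the lemma is therefore the reverse inequality, which must be established for $\P{p}$-a.e.~$\sigma$.

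First I would observe that the event $\{\sigma; \, \dimh A(\sigma) \geq \dimh A\}$ is a tail event with respect to $\P{p}$: altering finitely many coordinates of $\sigma$ changes the index set $\{k; \, \sigma_k = 1\}$ only finitely, hence leaves the limsup $A(\sigma)$ — and so its dimension — unchanged. By Kolmogorov's zero-one law it then suffices to show that this event has positive probability.

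The main difficulty is precisely this positivity, and here I would use a covering-and-averaging device rather than estimate the probability directly. The idea is to realize $\P{p}$ as the pushforward of a translation-invariant measure. Put $H = (\rea/\ints)^\nat$ and let $\Q$ be the product Lebesgue measure on $H$; define $\pi \colon H \to \Sigma$ coordinate-wise by $\pi(\eta)_k = 1$ if $\eta_k \in [0, p)$ and $\pi(\eta)_k = 0$ otherwise. Since each $\eta_k$ is uniform on $[0, 1)$, one has $\P{p} = \Q \circ \pi^{-1}$, so with $G = \{\eta \in H; \, \dimh A(\pi(\eta)) \geq \dimh A\}$ it is enough to prove $\Q(G) > 0$. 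Now choose $N$ with $Np \geq 1$, so that the $N$ translates $[0, p) + i/N$, for $i = 0, \dots, N - 1$, cover the circle $\rea / \ints$. For each fixed $\eta$, classifying every index $k$ with $x \in A_k$ according to which translate contains $\eta_k$ expresses $A$ as the \emph{finite} union $A = \bigcup_{i = 0}^{N - 1} A\big(\pi(\eta - (i/N)^\infty)\big)$, where $(i/N)^\infty$ is the constant sequence and the subtraction is taken coordinate-wise in $\rea / \ints$. By finite stability of $\dimh$, some summand attains the full dimension $\dimh A$, that is, $\eta - (i/N)^\infty \in G$ for at least one $i$.

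Finally I would integrate. The pointwise inequality $\sum_{i = 0}^{N - 1} \chi_G(\eta - (i/N)^\infty) \geq 1$, integrated against $\Q$ and using its translation invariance, yields $1 \leq N \, \Q(G)$, so $\Q(G) \geq 1/N > 0$. Since $G$ is a tail event, the zero-one law promotes this to $\Q(G) = 1$, and pushing forward through $\pi$ gives $\dimh A(\sigma) \geq \dimh A$ for $\P{p}$-a.e.~$\sigma$. The essential ingredient — and the step I expect to require the most care to set up — is the passage to the translation-invariant torus model together with the covering of $\rea / \ints$ by shifted copies of $[0, p)$; once those are in place, finite stability of the dimension and a single averaging line finish the argument.
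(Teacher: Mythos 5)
Your proposal is correct and follows essentially the same route as the paper's own proof: the same torus model $H = (\rea/\ints)^\nat$ with product Lebesgue measure, the same covering of $\rea/\ints$ by the $N$ translates of $[0,p)$, the same finite-stability-plus-averaging argument giving $\Q(G) \geq 1/N$, and the same zero-one law upgrade to full measure. The only cosmetic difference is that you state the tail-event justification explicitly (finitely many coordinate changes do not affect a limsup), which the paper leaves implicit.
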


\begin{lemma} \label{convcomblemma}
  Let $\seq \mu^0$ and $\seq \mu^1$ be two sequences of probability
  measures on $X$, let $p \in (0, 1)$, and define the sequence $\seq
  \nu$ of probability measures on $X$ by $\nu_k = (1 - p)\mu^0_k +
  p\mu^1_k$. Then for every sequence $\seq r$ of positive numbers,
  \[
  f_{\seq \nu}(\seq r) = \max\left(f_{\seq\mu^0}(\seq r),
  f_{\seq\mu^1}(\seq r)\right).
  \]

  \begin{proof}
    Define the map $\pi \colon \Sigma \times \Omega \times \Omega \to
    \Omega$ by
    \[
    \pi(\sigma, \bm\omega)_k = \omega^{\sigma_k}_k,
    \]
    where $\bm\omega = (\omega^0, \omega^1) \in \Omega \times \Omega$,
    and let $\Q = \P{p} \times \P{\seq \mu^0} \times \P{\seq \mu^1}$.
Then $\P{\seq \nu} = \Q \circ \pi^{-1}$, so it is
    enough to show that
    \[
    \dimh E_{\seq r}(\pi(\sigma, \bm\omega)) =
    \max\left(f_{\seq\mu^0}(\seq r), f_{\seq\mu^1}(\seq r)\right)
    \]
    for $\Q$-a.e.~$(\sigma, \bm\omega)$. For $i = 0, 1$, let
    \[
    A_i(\sigma, \bm\omega) =
    \limsup_{\sigma_k = i} \cball{\omega^i_k}{r_k}.
    \]
    Then for each fixed $\bm\omega$
    \[
    \dimh A_i(\sigma, \bm\omega) = \dimh E_{\seq r} (\omega^i)
    \]
    for $\P{p}$-a.e.~$\sigma$ by Lemma~\ref{posproblemma}, and hence
    \[
    \dimh A_i(\sigma, \bm\omega) = f_{\seq\mu^i}(\seq r)
    \]
    for $\Q$-a.e.~$(\sigma, \bm\omega)$. Since
    \[
    E_{\seq r}(\pi(\sigma, \bm\omega)) =
    A_0(\sigma, \bm\omega) \cup
    A_1(\sigma, \bm\omega)
    \]
    for every $(\sigma, \bm\omega)$, the lemma now follows by the
    finite stability of the Hausdorff dimension.
  \end{proof}
\end{lemma}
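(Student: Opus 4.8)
The plan is to exploit the elementary observation that drawing a centre from the mixture $\nu_k = (1-p)\mu^0_k + p\mu^1_k$ is the same as first tossing an independent $p$-biased coin to decide, for each coordinate $k$, whether to draw from $\mu^0_k$ or from $\mu^1_k$, and then drawing from the chosen measure. Concretely, I would introduce the map $\pi \colon \Sigma \times \Omega \times \Omega \to \Omega$ given by $\pi(\sigma, \bm\omega)_k = \omega^{\sigma_k}_k$, where $\bm\omega = (\omega^0, \omega^1)$, and put $\Q = \P{p} \times \P{\seq \mu^0} \times \P{\seq \mu^1}$. A coordinatewise check shows that $\P{\seq \nu} = \Q \circ \pi^{-1}$: in coordinate $k$ the push-forward places mass $1-p$ on $\mu^0_k$ and mass $p$ on $\mu^1_k$, and independence across coordinates is inherited from the product structure of $\Q$. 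Hence it suffices to identify the $\Q$-almost sure value of $\dimh E_{\seq r}(\pi(\sigma, \bm\omega))$.

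The second step is to split the limsup set according to the coin outcomes. For $i = 0, 1$ let $A_i(\sigma, \bm\omega) = \limsup_{\sigma_k = i} \cball{\omega^i_k}{r_k}$. Then for \emph{every} $(\sigma, \bm\omega)$ one has $E_{\seq r}(\pi(\sigma, \bm\omega)) = A_0(\sigma, \bm\omega) \cup A_1(\sigma, \bm\omega)$, since a point lies in infinitely many of the balls $\cball{\omega^{\sigma_k}_k}{r_k}$ exactly when it lies in infinitely many of those indexed by $\{k : \sigma_k = 0\}$ or infinitely many of those indexed by $\{k : \sigma_k = 1\}$. By the finite stability of $\dimh$ it therefore remains to evaluate $\dimh A_i(\sigma, \bm\omega)$ for each $i$.

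This is where Lemma~\ref{posproblemma} is invoked, and it is the step that needs the most care. I would \emph{first freeze} $\bm\omega$, so that the family $\{\cball{\omega^i_k}{r_k}\}_k$ becomes a deterministic sequence of sets and $A_i(\sigma, \bm\omega)$ is precisely the limsup of the subsequence selected by $\{k : \sigma_k = i\}$. Applying Lemma~\ref{posproblemma} with selection probability $p$ when $i = 1$, and, by the symmetry of that lemma in the two coin values, with probability $1 - p$ when $i = 0$ (both lying in $(0,1)$), yields $\dimh A_i(\sigma, \bm\omega) = \dimh \limsup_k \cball{\omega^i_k}{r_k} = \dimh E_{\seq r}(\omega^i)$ for $\P{p}$-a.e.~$\sigma$. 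A Fubini argument then upgrades this fibrewise statement to a joint one: the event $\{(\sigma, \bm\omega) : \dimh A_i(\sigma, \bm\omega) = \dimh E_{\seq r}(\omega^i)\}$ has full measure on $\P{p}$-almost every fibre and hence full $\Q$-measure; intersecting with the full-measure set where $\dimh E_{\seq r}(\omega^i) = f_{\seq \mu^i}(\seq r)$ gives $\dimh A_i(\sigma, \bm\omega) = f_{\seq \mu^i}(\seq r)$ for $\Q$-a.e.~$(\sigma, \bm\omega)$.

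Combining the last two steps, finite stability gives $\dimh E_{\seq r}(\pi(\sigma, \bm\omega)) = \max\!\left(f_{\seq \mu^0}(\seq r), f_{\seq \mu^1}(\seq r)\right)$ for $\Q$-a.e.~$(\sigma, \bm\omega)$, and transporting this back through $\P{\seq \nu} = \Q \circ \pi^{-1}$ yields the asserted value of $f_{\seq \nu}(\seq r)$. The main obstacle is the measure-theoretic bookkeeping of the third paragraph: one must cleanly separate the two independent sources of randomness so that Lemma~\ref{posproblemma}, which concerns a fixed deterministic family of sets, can be applied with $\bm\omega$ held fixed, and then reassemble the fibrewise almost sure conclusions into a single $\Q$-almost sure statement via Fubini. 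Everything else is a direct consequence of the coupling and of monotonicity and finite stability of $\dimh$.
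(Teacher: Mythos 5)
Your proposal is correct and follows essentially the same route as the paper's own proof: the same coupling map $\pi$ and product measure $\Q$, the same decomposition $E_{\seq r}(\pi(\sigma,\bm\omega)) = A_0 \cup A_1$, and the same fibrewise application of Lemma~\ref{posproblemma} followed by finite stability of $\dimh$. Your extra care in spelling out the Fubini upgrade and the symmetric use of the lemma with parameter $1-p$ for $i=0$ only makes explicit what the paper leaves implicit.
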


\begin{lemma} \label{problemma}
  Let $\seq\mu$ and $\seq\nu$ be sequences of probability measures on
  $X$ and assume that $\mu_k \ll \nu_k$ for every $k$, and that the
  densities $\mathrm{d}\mu_k / \mathrm{d}\nu_k$ are bounded uniformly
  in $k$.  Then $f_{\seq\mu}(\seq r) \leq f_{\seq\nu}(\seq r)$ for
  every $\seq r$.

\begin{proof}
  Let $C > 1$ be such that $\mathrm{d} \mu_k / \mathrm{d}\nu_k < C$
  for every $k$, and define $\widetilde \mu_k$ by
  \[
  \mathrm{d} \widetilde\mu_k = \frac{1}{C - 1}
  \left(C - \frac{\,\mathrm{d}\mu_k}{\, \mathrm{d}\nu_k}\right)
  \, \mathrm{d}\nu_k.
  \]
  Then for every $k$,
  \[
  \nu_k = \frac{1}{C}\mu_k + \frac{C - 1}{C}\widetilde\mu_k,
  \]
  and the lemma follows from Lemma~\ref{convcomblemma}.
\end{proof}
\end{lemma}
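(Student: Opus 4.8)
The plan is to deduce the inequality directly from Lemma~\ref{convcomblemma}, which already shows that mixing two measure-sequences produces a limsup set whose almost sure dimension is the \emph{maximum} of the two individual almost sure dimensions. The key observation is that the uniform boundedness of the densities lets me realise $\seq\nu$ itself as such a convex combination, with $\seq\mu$ carrying a fixed positive weight; once this is done, the bound $f_{\seq\mu}(\seq r) \leq f_{\seq\nu}(\seq r)$ is immediate because the maximum dominates its first argument.

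First I would invoke the uniform bound to fix a single constant $C > 1$ with $\mathrm{d}\mu_k/\mathrm{d}\nu_k < C$ for every $k$, and then define an auxiliary measure $\widetilde\mu_k$ through its density $\frac{1}{C-1}\bigl(C - \mathrm{d}\mu_k/\mathrm{d}\nu_k\bigr)$ with respect to $\nu_k$. Two elementary checks are needed here: that this density is nonnegative, which is exactly the content of the strict bound $\mathrm{d}\mu_k/\mathrm{d}\nu_k < C$, and that it integrates to $1$ against $\nu_k$, which follows from $\int \mathrm{d}\mu_k/\mathrm{d}\nu_k \,\mathrm{d}\nu_k = \mu_k(X) = 1$ and gives $\frac{1}{C-1}(C-1) = 1$. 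Hence each $\widetilde\mu_k$ is a genuine probability measure on $X$.

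A direct computation with the densities then yields, for every $k$,
\[
\nu_k = \frac{1}{C}\mu_k + \frac{C-1}{C}\widetilde\mu_k,
\]
so that $\seq\nu$ is precisely the convex-combination sequence built from $\seq\mu$ and $\seq{\widetilde\mu}$ with the single mixing parameter $p = (C-1)/C \in (0,1)$. Applying Lemma~\ref{convcomblemma} gives $f_{\seq\nu}(\seq r) = \max\bigl(f_{\seq\mu}(\seq r),\, f_{\seq{\widetilde\mu}}(\seq r)\bigr) \geq f_{\seq\mu}(\seq r)$, which is the claim.

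I expect no real obstacle once the reduction is in place, since the entire probabilistic content has been packaged into Lemma~\ref{convcomblemma} and what remains is the routine measure-theoretic verification above. The one conceptual point that must be got right is the \emph{direction} of the decomposition: one writes $\nu_k$ (not $\mu_k$) as the mixture, so that $\mu_k$ appears with the positive weight $1/C$. The hypotheses of Lemma~\ref{convcomblemma} demand a single weight $p$ independent of $k$, and it is precisely the \emph{uniform} density bound that makes $C$, and hence $1/C$, a constant rather than a $k$-dependent quantity that could degenerate.
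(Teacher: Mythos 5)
Your proposal is correct and follows essentially the same route as the paper: the same choice of a uniform constant $C > 1$, the same auxiliary measures $\widetilde\mu_k$ with density $\frac{1}{C-1}\bigl(C - \mathrm{d}\mu_k/\mathrm{d}\nu_k\bigr)$ with respect to $\nu_k$, the same decomposition $\nu_k = \frac{1}{C}\mu_k + \frac{C-1}{C}\widetilde\mu_k$, and the same appeal to Lemma~\ref{convcomblemma}. The only difference is that you spell out the routine verifications (nonnegativity and total mass of $\widetilde\mu_k$) that the paper leaves implicit.
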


Let $Y$ be a measurable space and let $x \mapsto \mu^x$ be a
measure-valued function defined on $X$ and taking values in the set of
probability measures on $Y$. Assume that the function $x \mapsto
\mu^x(A)$ is measurable for every measurable subset $A$ of $Y$, and
let $\nu$ be a probability measure on $X$. Then it is not difficult to
check that the set function
\[
A \mapsto \int \mu^x(A) \intd\nu(x)
\]
is a probability measure on $Y$. It will be denoted by $\int \mu^x
\intd\nu(x)$.

If $(\mu^x_k)_{k = 1}^K$ is a finite or countable ($K = \infty$)
sequence of measure-valued functions according to the above, and
$(\nu_k)_{k = 1}^K$ is a sequence of probability measures on $X$, then
\begin{equation}	\label{prodintcommeq}
  \int \bigtimes_k \mu^{x_k} \intd{(\bigtimes_k \nu_k)}(\seq x)
  =
  \bigtimes_k \int \mu_k^x \intd\nu_k(x).
\end{equation}
(the first integral is over $X^K$, and the measures on both sides are
probability measures on $Y^K$). For finite $K$ this is an easy
consequence of Fubini's theorem, and then it follows for $K = \infty$
since the two measures in~\eqref{prodintcommeq} agree on cylinders in
$Y^\nat$.

\begin{lemma}	\label{rholemmma}
  Let $\mu$ and $\nu$ be probability measures on $X$ and let $r$ and
  $C$ be positive numbers such that,
  \[
  \mu\left(\oball{x}{2r}\right) \leq C \nu\left(\oball{x}{r}\right)
  \]
  for every $x \in \supp \mu$. Define the probability measure $\lambda$
  on $X$ by
  \[
  \lambda = \int_{G_r} \mu^x \intd\nu(x),
  \]
  where
  \[
  G_r = \{x \in X; \, \mu(\oball{x}{r}) > 0 \}
  \]
  and
  \[
  \mu^x = \frac{\restr{\mu}{\oball{x}{r}}}{\mu(\oball{x}{r})}.
  \]
  Then $\mu \ll \lambda$ and $\mathrm{d} \mu / \mathrm{d} \lambda \leq
  C$.

\begin{proof}
  It will be shown that $\intd\lambda = \rho \intd\mu$, where $\rho
  \geq C^{-1}$ $\mu$-a.e. Using the definition of $\lambda$,
  \begin{align*}
    \lambda(A) &= \int_{G_r} \frac{1}{\mu(\oball{x}{r})} \int_{\oball{x}{r}}
    \chi_A(y) \intd\mu(y) \intd\nu(x)\\
    &=
    \iint_{|x - y| < r} \chi_A(y) \frac{\chi_{G_r}(x)}{\mu(\oball{x}{r})}
    \intd\nu(x) \intd\mu(y) \\
    &=
    \int \chi_A(y) \int_{\oball{y}{r}}
    \frac{\chi_{G_r}(x)}{\mu(\oball{x}{r})}
    \intd\nu(x) \intd\mu(y)
    =
    \int_A \rho(y) \intd\mu(y),
  \end{align*}
  where
  \[
  \rho(y) =
  \int_{\oball{y}{r}} \frac{\chi_{G_r}(x)}{\mu(\oball{x}{r})} \intd\nu(x).
  \]
  If $y \in \supp \mu$ and $x \in \oball{y}{r}$ then
  $\mu(\oball{x}{r}) > 0$ since $\oball{x}{r}$ is an open set
  containing $y$. This shows that $\oball{y}{r} \subset G_r$ for $y
  \in \supp \mu$, and hence for $\mu$-a.e.~$y$,
  \begin{align*}
    \rho(y) &= \int_{\oball{y}{r}} \frac{1}{\mu(\oball{x}{r})}\intd\nu(x)
    \geq
    \int_{\oball{y}{r}} \frac{1}{\mu(\oball{y}{2r})}\intd\nu(x) \\
    &=
    \frac{\nu(\oball{y}{r})}{\mu(\oball{y}{2r})} \geq \frac{1}{C}.\qedhere
  \end{align*}
\end{proof}
\end{lemma}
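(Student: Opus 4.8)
The plan is to prove the slightly stronger statement that $\lambda$ has a density with respect to $\mu$ of the form $\intd{\lambda} = \rho \intd{\mu}$ with $\rho \geq 1/C$ holding $\mu$-almost everywhere. Both conclusions of the lemma then follow at once: the relation $\lambda(A) \geq C^{-1}\mu(A)$ for every Borel set $A$ forces $\mu \ll \lambda$, and the same inequality written as $\mu(A) \leq C\lambda(A)$ says precisely that $\mathrm{d}\mu/\mathrm{d}\lambda \leq C$. Note that $\lambda \ll \mu$ is automatic, since each $\mu^x$ is a normalised restriction of $\mu$, so the only real content is the lower bound on the density.

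First I would expand the definition of $\lambda(A)$ and apply Tonelli's theorem to interchange the two integrations; the integrand $\chi_A(y)\,\mu(\oball{x}{r})^{-1}$ is non-negative and measurable (using the assumed measurability of $x \mapsto \mu^x(A)$), so no integrability issue arises. Since the condition $y \in \oball{x}{r}$ is the same as $x \in \oball{y}{r}$, the interchange produces $\lambda(A) = \int_A \rho \intd{\mu}$ with
\[
\rho(y) = \int_{\oball{y}{r}} \frac{\chi_{G_r}(x)}{\mu(\oball{x}{r})} \intd{\nu(x)}.
\]

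The heart of the argument is a pointwise lower bound for $\rho$ at $\mu$-almost every $y$, that is, at every $y \in \supp\mu$, and it rests on two inclusions. First, if $x \in \oball{y}{r}$ then $\oball{x}{r}$ is an open neighbourhood of $y$, so $\mu(\oball{x}{r}) > 0$ because $y \in \supp\mu$; hence $\oball{y}{r} \subset G_r$ and the indicator $\chi_{G_r}$ may be dropped. Second, the triangle inequality gives $\oball{x}{r} \subset \oball{y}{2r}$ for such $x$, whence $\mu(\oball{x}{r}) \leq \mu(\oball{y}{2r})$. Combining these,
\[
\rho(y) \geq \int_{\oball{y}{r}} \frac{1}{\mu(\oball{y}{2r})} \intd{\nu(x)} = \frac{\nu(\oball{y}{r})}{\mu(\oball{y}{2r})} \geq \frac{1}{C},
\]
where the final step is exactly the hypothesis $\mu(\oball{y}{2r}) \leq C\nu(\oball{y}{r})$, available since $y \in \supp\mu$.

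I do not expect a genuine obstacle here: the Tonelli interchange is routine once measurability is noted, and the real content is simply the clean pairing of the two ball inclusions with the covering-type hypothesis. The one point worth flagging is that both the discarding of $\chi_{G_r}$ and the invocation of the hypothesis are valid only on $\supp\mu$; since $\mu$ is carried by its support, this suffices to make the bound $\rho \geq 1/C$ hold $\mu$-a.e., which is all that the conclusion requires.
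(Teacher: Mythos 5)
Your proof is correct and follows essentially the same route as the paper's own argument: a Tonelli interchange yielding $\intd\lambda = \rho \intd\mu$, the observation that $\oball{y}{r} \subset G_r$ for $y \in \supp\mu$, and the two ball inclusions paired with the hypothesis to get $\rho \geq 1/C$ $\mu$-a.e. The only difference is that you spell out explicitly how $\mu \ll \lambda$ and $\mathrm{d}\mu/\mathrm{d}\lambda \leq C$ follow from the density bound, which the paper leaves implicit.
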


\begin{lemma} \label{Clemma}
  Let $\seq\mu$ and $\seq\nu$ be sequences of probability measures on
  $X$ and let $\seq r$ be a sequence of positive numbers. Assume that
  there is a constant $C$ such that for $k = 1, 2, \ldots$ and every
  $x \in \supp\mu_k$,
  \[
  \mu_k(\oball{x}{2r_k}) \leq C \nu_k(\oball{x}{r_k}).
  \]
  Then $f_{\seq\mu}(\seq r) \leq f_{\seq\nu}(2\seq r)$.

  \begin{proof}
    For $k = 1, 2, \ldots$ and $x \in X$ such that
    $\mu_k(\oball{x}{r_k}) > 0$, let
    \[
    \mu_k^x = \frac{\restr{\mu_k}{\oball{x}{r_k}}}{\mu_k(\oball{x}{r_k})}
    \]
    and define the probability measures $\lambda_k$ by
    \[
    \lambda_k = \int_{\{x ;\,\mu_k(\oball{x}{r_k}) > 0\}} \mu_k^x
    \intd\nu_k(x).
    \]
    For every $k$, $\mu_k \ll \lambda_k$ with
    $\mathrm{d}\mu_k/\mathrm{d}\nu_k \leq C$ by Lemma~\ref{rholemmma},
    and hence $f_{\seq \mu}(\seq r) \leq f_{\seq \lambda}(\seq r)$ by
    Lemma~\ref{problemma}. Thus it remains to show that
    $f_{\seq\lambda}(\seq r) \leq f_{\seq \nu}(2 \seq r)$.
    
    Let $t = f_{\seq\lambda}(\seq r)$ and $G = \{\omega \in \Omega; \,
    \dimh E_{2\seq r}(\omega) \geq t\}$.  For $\omega \in \Omega$, let
    $\P{\seq \mu}^\omega = \times_{k = 1}^\infty \mu_k^{\omega_k}$.
    Then the probability measure $\P{\seq \lambda} = \times_{k =
      1}^\infty \lambda_k$ can also be expressed,
    by~\eqref{prodintcommeq}, as
    \[
    \P{\seq\lambda} = \int \P{\seq\mu}^\omega \intd{\P{\seq\nu}}(\omega).
    \]
    For every fixed $\omega \in \Omega$, it holds that $E_{\seq
      r}(\eta) \subset E_{2\seq r}(\omega)$ for
    $\P{\seq\mu}^\omega$-a.e.~$\eta \in \Omega$, and thus
    \[
    \chi_G(\omega) \geq
    \P{\seq\mu}^\omega\{\eta \in \Omega; \, \dimh E_{\seq r}(\eta) \geq t\}.
    \]
    Integrating this gives
    \begin{align*}
      \P{\seq\nu}(G) &\geq \int \P{\seq\mu}^\omega\{\eta \in \Omega;
      \, \dimh E_{\seq r}(\eta) \geq t\}\intd{\P{\seq\nu}}(\omega)
      \\ &= \P{\seq\lambda}(\{\eta \in \Omega; \, \dimh E_{\seq
        r}(\eta) \geq t\}) = 1.\qedhere
    \end{align*}
  \end{proof}
\end{lemma}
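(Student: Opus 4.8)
The plan is to interpolate between $\seq\mu$ and the target by introducing an auxiliary sequence $\seq\lambda$ manufactured from $\seq\mu$ and $\seq\nu$, and then to prove the two inequalities
\[
f_{\seq\mu}(\seq r) \leq f_{\seq\lambda}(\seq r) \leq f_{\seq\nu}(2\seq r).
\]
The first of these will be a direct consequence of the domination machinery already in place, while the second is where the geometry of the construction enters.

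To set up the first inequality, I would apply Lemma~\ref{rholemmma} coordinatewise. For each $k$ put $\mu_k^x = \restr{\mu_k}{\oball{x}{r_k}}/\mu_k(\oball{x}{r_k})$ and
\[
\lambda_k = \int_{\{x;\,\mu_k(\oball{x}{r_k}) > 0\}} \mu_k^x \intd\nu_k(x).
\]
The hypothesis $\mu_k(\oball{x}{2r_k}) \leq C\nu_k(\oball{x}{r_k})$ for $x \in \supp\mu_k$ is precisely the assumption of Lemma~\ref{rholemmma}, so that lemma gives $\mu_k \ll \lambda_k$ with densities bounded by $C$ uniformly in $k$. Lemma~\ref{problemma} then yields $f_{\seq\mu}(\seq r) \leq f_{\seq\lambda}(\seq r)$, and it remains only to compare $\seq\lambda$ with $\seq\nu$ at doubled radii.

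For the second inequality I would disintegrate $\P{\seq\lambda}$ over $\P{\seq\nu}$. Writing $\P{\seq\mu}^\omega = \times_{k = 1}^\infty \mu_k^{\omega_k}$, the interchange formula~\eqref{prodintcommeq} expresses $\P{\seq\lambda} = \int \P{\seq\mu}^\omega \intd{\P{\seq\nu}}(\omega)$. The key geometric observation is that each $\mu_k^{\omega_k}$ is concentrated on $\cball{\omega_k}{r_k}$, so a point $\eta$ drawn from $\P{\seq\mu}^\omega$ satisfies $|\eta_k - \omega_k| \leq r_k$ for every $k$; consequently $\cball{\eta_k}{r_k} \subset \cball{\omega_k}{2r_k}$ and therefore $E_{\seq r}(\eta) \subset E_{2\seq r}(\omega)$. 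Setting $t = f_{\seq\lambda}(\seq r)$ and $G = \{\omega;\, \dimh E_{2\seq r}(\omega) \geq t\}$, this containment forces $\chi_G(\omega)$ to dominate the conditional probability $\P{\seq\mu}^\omega\{\eta;\, \dimh E_{\seq r}(\eta) \geq t\}$, since a positive such probability produces an $\eta$ witnessing $\dimh E_{2\seq r}(\omega) \geq t$. Integrating against $\P{\seq\nu}$ and invoking the disintegration gives $\P{\seq\nu}(G) = 1$, hence $f_{\seq\nu}(2\seq r) \geq t$, which is the claim.

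The step I expect to require the most care is the disintegration itself: one must be sure that~\eqref{prodintcommeq} legitimately factors $\P{\seq\lambda}$ through the $\nu$-sample, and that the inclusion $E_{\seq r}(\eta) \subset E_{2\seq r}(\omega)$ holds for $\P{\seq\mu}^\omega$-almost every $\eta$ simultaneously across all coordinates rather than merely coordinate by coordinate. Once these measure-theoretic points are secured, the dimension comparison is immediate from monotonicity of $\dimh$ under set inclusion.
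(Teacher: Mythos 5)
Your proposal is correct and follows essentially the same route as the paper's own proof: the same auxiliary sequence $\seq\lambda$ built from $\mu_k^x$ and $\nu_k$, the same appeal to Lemma~\ref{rholemmma} and Lemma~\ref{problemma} for $f_{\seq\mu}(\seq r) \leq f_{\seq\lambda}(\seq r)$, and the same disintegration of $\P{\seq\lambda}$ via~\eqref{prodintcommeq} combined with the inclusion $E_{\seq r}(\eta) \subset E_{2\seq r}(\omega)$ to conclude. The measure-theoretic points you flag (the product-formula factorisation and the simultaneous a.e.~inclusion across coordinates) are handled exactly as you anticipate, the latter being a countable intersection of full-measure events.
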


\begin{lemma}	\label{finallemma}
  Let $\mu$ and $\theta$ be probability measures on $X$, let $(V_k)_{k
    = 1}^\infty$ be a sequence of Borel subsets of $X$ such that $0 <
  \mu(V_k) < 1$ for every $k$, and let $(r_k)_{k = 1}^\infty$ be a
  decreasing sequence of positive numbers. Define
  \begin{align*}
  p_k &= \mu(V_k), &  
  M_k &= \sum_{i = 1}^k p_i,\\
  k_n &= \min\{k; \, M_k / 2 \geq n\}, &  
  \rho_n &= \frac{1}{2} r_{k_n}, \\
  \mu_k^0 &= \frac{\restr{\mu}{V^c_k}}{\mu(V^c_k)}, &
  \text{and}\qquad 
  \mu_k^1 &= \frac{\restr{\mu}{V_k}}{\mu(V_k)}.
  \end{align*}
  Assume that
  \[
  \lim_{k \to \infty} \frac{M_k}{\log k} = \infty,
  \]
  and that there is a constant $C$ such that for every $k$ and every
  $x \in \supp\theta$,
  \[
  \theta(\oball{x}{2r_k}) \leq C \mu_k^1(\oball{x}{r_k}).
  \]
  Then $f_\mu(\seq r) \geq f_\theta(\seq\rho)$.

  \begin{proof}
    For $\sigma \in \Sigma$, let
    \[
    \P{\mu}^\sigma = \bigtimes_{k = 1}^\infty \mu_k^{\sigma_k},
    \]
    and define the probability measure $\P{\seq p}$ on $\Sigma$ by
    \[
    \P{\seq p} = \bigtimes_{k = 1}^\infty \left( (1 - p_k)\delta_0 +
    p_k\delta_1\right).
    \]
    For every $k$,
    \[
    \mu = (1 - p_k) \mu_k^0 + p_k \mu_k^1,
    \]
    and thus by~\eqref{prodintcommeq},
    \[
    \P{\mu} = \int \P{\mu}^\sigma \intd{\P{\seq p}}(\sigma).
    \]
    Let $t = f_\theta(\seq\rho)$, and define the sets
    \[
    G = \{\omega \in \Omega; \, \dimh E_{\seq r}(\omega) \geq t\}
    \]
    and
    \[
    A = \left\{\sigma \in \Sigma; \, \exists n_0 \text{ s.t.~} \sum_{k
      = 1}^n \sigma_k \geq M_n / 2 \text{ for every } n \geq
    n_0\right\}.
    \]
    For $\sigma \in \Sigma$ and $\omega \in \Omega$, let
    \[
    E_{\seq r}(\sigma, \omega) = \limsup_{\sigma_k = 1}
    \cball{\omega_k}{r_k},
    \]
    and define for $\sigma\in\Sigma$,
    \[
    G_\sigma = \{\omega \in \Omega; \, \dimh E_{\seq r}(\sigma,
    \omega) \geq t \}.
    \]
    Then $G_\sigma \subset G$ for every $\sigma$, and $\P{\seq p}(A)
    = 1$ by Lemma~\ref{deviationlemma}, and hence
    \[
    \P{\mu}(G) = \int \P{\mu}^\sigma(G) \intd{\P{\seq p}}
    \geq \int_A \P{\mu}^\sigma(G_\sigma) \intd{\P{\seq p}}.
    \]
    To conclude the proof, it will be shown that
    $\P{\mu}^\sigma(G_\sigma) = 1$ for every $\sigma \in A$.
    
    So fix $\sigma \in A$, and let $(l_n)_{n = 1}^\infty$ be the
    strictly increasing enumeration of $\{l; \, \sigma_l =
    1\}$. Define $\pi \colon \Omega \to \Omega$ by
    \[
    \pi(\omega)_n = \omega_{l_n}.
    \]
    Then
    \[
    \P{\mu}^\sigma \circ \pi^{-1} = \bigtimes_{n = 1}^\infty \mu_{l_n}^1,
    \]
    and
    \[
    E_{2\seq \rho} (\pi(\omega)) \subset E_{\seq r}(\sigma, \omega)
    \]
since $(r_k)$ is decreasing.
    Thus by Lemma~\ref{Clemma}, $\P{\mu}^\sigma$-a.e.~$\omega$ is such
    that
    \[
    \dimh E_{\seq r}(\sigma, \omega) \geq \dimh E_{2\seq \rho}
    (\pi(\omega)) \geq f_\theta(\seq\rho). \qedhere
    \]
  \end{proof}
\end{lemma}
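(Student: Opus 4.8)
The plan is to exploit that $\mu$ is, at each index $k$, the two-component mixture $\mu = (1 - p_k)\mu_k^0 + p_k\mu_k^1$ with $\mu_k^1$ concentrated on $V_k$. By the product--integral identity~\eqref{prodintcommeq} this realises $\P{\mu}$ as an average $\P{\mu} = \int \P{\mu}^\sigma \intd{\P{\seq p}}(\sigma)$ over a Bernoulli selection $\sigma \in \Sigma$, where $\P{\mu}^\sigma = \times_{k = 1}^\infty \mu_k^{\sigma_k}$ and $\P{\seq p}$ makes $\sigma_k = 1$ with probability $p_k$. The indices with $\sigma_k = 1$ contribute balls $\cball{\omega_k}{r_k}$ whose centres are drawn from $\mu_k^1$, and these alone already form a limsup subset of $E_{\seq r}(\omega)$. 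It therefore suffices to show that for $\P{\seq p}$-a.e.~$\sigma$ this selected subset has dimension at least $f_\theta(\seq\rho)$ for $\P{\mu}^\sigma$-a.e.~$\omega$, and then to integrate over $\sigma$.

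First I would introduce the event $A \subset \Sigma$ on which the number $N_n$ of selected indices up to time $n$ eventually satisfies $N_n \geq M_n / 2$. Since $M_k / \log k \to \infty$, Lemma~\ref{deviationlemma} gives $\P{\seq p}(A) = 1$. Fixing $\sigma \in A$, I would enumerate $\{k ; \, \sigma_k = 1\}$ as $(l_n)$ and re-index the selected coordinates by $\pi(\omega)_n = \omega_{l_n}$. Because $N_{l_n} = n$, the defining property of $A$ gives $M_{l_n} \leq 2n \leq M_{k_n}$, hence $l_n \leq k_n$, and monotonicity of $(r_k)$ then yields $r_{l_n} \geq r_{k_n} = 2\rho_n$. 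This inequality is exactly what produces the containment $E_{2\seq\rho}(\pi(\omega)) \subset \limsup_n \cball{\omega_{l_n}}{r_{l_n}} \subset E_{\seq r}(\omega)$, and it explains why $\rho_n$ is built from the deterministic threshold $k_n$ rather than the random $l_n$: this keeps $\seq\rho$ a fixed sequence while still dominating the selected radii.

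Next I would identify the law of the re-indexed sequence. A second use of~\eqref{prodintcommeq} shows $\P{\mu}^\sigma \circ \pi^{-1} = \times_{n = 1}^\infty \mu_{l_n}^1$, so the almost sure value of $\dimh E_{2\seq\rho}(\pi(\omega))$ is $f_{(\mu_{l_n}^1)}(2\seq\rho)$. The standing hypothesis $\theta(\oball{x}{2r_k}) \leq C\mu_k^1(\oball{x}{r_k})$ for $x \in \supp\theta$ is tailored to be the covering assumption of Lemma~\ref{Clemma} for the comparison of $\theta$ at radii $\seq\rho$ with the sequence $(\mu_{l_n}^1)$, and that lemma then gives $f_\theta(\seq\rho) \leq f_{(\mu_{l_n}^1)}(2\seq\rho)$. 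Together with the containment from the previous step this yields $\dimh E_{\seq r}(\omega) \geq f_\theta(\seq\rho)$ for $\P{\mu}^\sigma$-a.e.~$\omega$.

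Finally I would integrate. Writing $G = \{\omega ; \, \dimh E_{\seq r}(\omega) \geq f_\theta(\seq\rho)\}$, the representation $\P{\mu} = \int \P{\mu}^\sigma \intd{\P{\seq p}}(\sigma)$ together with $\P{\mu}^\sigma(G) = 1$ for every $\sigma \in A$ and $\P{\seq p}(A) = 1$ forces $\P{\mu}(G) = 1$, which is the assertion $f_\mu(\seq r) \geq f_\theta(\seq\rho)$. The hard part will be the bookkeeping of the factors of two and of the passage between the random index $l_n$ and the deterministic $k_n$: both the containment $E_{2\seq\rho}(\pi(\omega)) \subset E_{\seq r}(\omega)$ and the verification that the hypothesis of Lemma~\ref{Clemma} is inherited after re-indexing rest on combining monotonicity of $(r_k)$ with the estimate $l_n \leq k_n$ from Lemma~\ref{deviationlemma}, and it is the consistent tracking of these doublings that forces the specific definitions of $k_n$ and $\rho_n$.
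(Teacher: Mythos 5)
Your proposal is correct and is essentially the paper's own proof: the same mixture decomposition $\P{\mu} = \int \P{\mu}^\sigma \intd{\P{\seq p}}(\sigma)$ via~\eqref{prodintcommeq}, the same full-measure event $A$ obtained from Lemma~\ref{deviationlemma}, the same re-indexing map $\pi$ with $l_n \leq k_n$ giving $E_{2\seq\rho}(\pi(\omega)) \subset E_{\seq r}(\sigma,\omega) \subset E_{\seq r}(\omega)$, the same identification $\P{\mu}^\sigma \circ \pi^{-1} = \bigtimes_{n=1}^\infty \mu_{l_n}^1$ followed by an appeal to Lemma~\ref{Clemma}, and the same concluding integration over $\sigma \in A$. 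Your only additions are expository---spelling out $M_{l_n} \leq 2n \leq M_{k_n}$ behind $l_n \leq k_n$, and explaining why $\rho_n$ is built from the deterministic index $k_n$ rather than the random $l_n$---and they are consistent with the paper's argument.
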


\subsection{Continuing the example}
In this section, $\mu$ denotes the measure defined in the example and
$C$ is the ternary Cantor set. Recall that for $r \in (0,1)$, the set
$C_r$ is the $r$-fattening of $C$, that is,
\[
C_r = \{x; \, \dist(x, C) \leq r \}.
\]
(Technically, this does not conflict with the notation $C_k$ for the
$k$:th approximation to $C$, since $k$ is an integer and $r$ is
not.)

\begin{lemma} \label{pklemma}
  For every $r \in (0, 1)$,
  \[
  2^{-(\beta - 1)} r^{(\beta - 1)\frac{\log 2}{\log 3}}
  \leq
  \mu(C_r)
  \leq
  2^{2(\beta - 1)} r^{(\beta - 1)\frac{\log 2}{\log 3}}.
  \]

  \begin{proof}
    Fix $r$ and let $n$ be the unique integer such that $3^{-(n +
      1)} \leq r < 3^{-n}$.  Then $C_{n + 1} \subset C_r$
    and $C_r \cap \supp \mu_k = \emptyset$ for $k \leq n -
    2$. Thus
    \[
    (1 - 2^{1 - \beta}) \sum_{k = n + 1}^\infty 2^{(1 - \beta)k}
    \leq \mu(C_r) \leq
    (1 - 2^{1 - \beta}) \sum_{k = n - 1}^\infty 2^{(1 - \beta)k},
    \]
    or equivalently,
    \[
    2^{-(\beta - 1)(n + 1)}
    \leq \mu(C_r) \leq
    2^{-(\beta - 1)(n - 1)}.
    \]
    Using that
    \[
    2^{-n} = 3^{-n \frac{\log 2}{\log 3}} \in \left[r^{\frac{\log
          2}{\log 3}}, \, 2 r^{\frac{\log 2}{\log 3}}\right]
    \]
    gives the stated estimates.
  \end{proof}
\end{lemma}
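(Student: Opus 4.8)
The plan is to reduce the computation of $\mu(C_r)$ to a geometric series over the components $\mu_k$, by first determining precisely which of these components can meet the fattening $C_r$. First I would fix $r \in (0,1)$ and choose the integer $n$ with $3^{-(n+1)} \le r < 3^{-n}$, so that $r$ is comparable to the length $3^{-n}$ of the level-$n$ intervals in the construction of $C$.

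The geometric heart of the argument is to pin down this interaction. On the one hand, every interval of $C_{n+1}$ has length $3^{-(n+1)} \le r$ and contains points of $C$, so $C_{n+1} \subset C_r$; consequently each high-index component $\mu_k$ with $k \ge n+1$ has $\supp \mu_k \subset C_k \subset C_{n+1} \subset C_r$ by the nesting of the construction, and therefore contributes its full mass $\mu_k(C_r) = 2^k$. On the other hand, the mass of $\mu_k$ sits in the middle ninths of the level-$k$ intervals, i.e.\ at the centres of the gaps removed at that stage, whence $\dist(\supp\mu_k, C) \ge 3^{-(k+2)}$; when $k \le n-2$ this distance is at least $3^{-n} > r$, so that $C_r \cap \supp\mu_k = \emptyset$. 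I expect verifying these two containments to be the step needing the most care, since it depends on the exact placement of the components recorded in the example; everything afterwards is bookkeeping.

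With the containments established, the estimate splits into two geometric sums. For the lower bound only the terms $k \ge n+1$ need be retained, each counted with its entire mass, giving
\[
\mu(C_r) \ge (1 - 2^{1-\beta}) \sum_{k = n+1}^\infty 2^{(1-\beta)k} = 2^{-(\beta-1)(n+1)}.
\]
For the upper bound the terms $k \le n-2$ drop out, and bounding each surviving $\mu_k$ by its full mass yields
\[
\mu(C_r) \le (1 - 2^{1-\beta}) \sum_{k = n-1}^\infty 2^{(1-\beta)k} = 2^{-(\beta-1)(n-1)}.
\]

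Finally I would convert these bounds in $n$ into bounds in $r$. Raising $3^{-(n+1)} \le r < 3^{-n}$ to the power $\log 2 / \log 3$ and using $3^{\log 2 / \log 3} = 2$ gives
\[
2^{-n} = 3^{-n \frac{\log 2}{\log 3}} \in \left[ r^{\frac{\log 2}{\log 3}}, \, 2\, r^{\frac{\log 2}{\log 3}} \right].
\]
Writing $2^{-(\beta-1)(n \pm 1)} = 2^{\mp(\beta-1)} \bigl(2^{-n}\bigr)^{\beta-1}$ and substituting this two-sided bound into the two displayed inequalities then produces exactly the constants $2^{-(\beta-1)}$ and $2^{2(\beta-1)}$ together with the exponent $(\beta-1)\frac{\log 2}{\log 3}$, which is the claim. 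The only genuinely nontrivial ingredient is the geometric localisation in the second paragraph; the rest is summation of geometric series and an elementary change of base.
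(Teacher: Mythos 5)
Your proposal is correct and follows essentially the same route as the paper's proof: the same choice of $n$ with $3^{-(n+1)} \le r < 3^{-n}$, the same two geometric facts ($C_{n+1} \subset C_r$, so the components with $k \ge n+1$ contribute full mass, and $C_r \cap \supp\mu_k = \emptyset$ for $k \le n-2$), the same geometric series giving $2^{-(\beta-1)(n+1)} \le \mu(C_r) \le 2^{-(\beta-1)(n-1)}$, and the same change-of-base step $2^{-n} \in \bigl[r^{\frac{\log 2}{\log 3}}, 2r^{\frac{\log 2}{\log 3}}\bigr]$. The only difference is that you spell out the justifications (the nesting $\supp\mu_k \subset C_k \subset C_{n+1}$ and the gap-distance bound $\dist(\supp\mu_k, C) \ge 3^{-(k+2)}$) that the paper leaves implicit.
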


\begin{exampleagain}[continued]
  Let $\theta$ be the uniform probability measure on $C$. Let $r_k =
  k^{-\alpha}$ and $V_k = C_{r_k}$, and define $p_k$, $M_k$, $k_n$ and
  $\rho_n$ as in Lemma~\ref{finallemma}. By Lemma~\ref{pklemma}, there
  is a constant $c_1$ such that $p_k \geq k^{-\alpha\gamma}$, where
  $\gamma = (\beta - 1)\frac{\log 2}{\log 3} = s_1 - d_1$, and thus
  there is another constant $c_2$ such that $M_k \geq c_2 k^{1 -
    \alpha\gamma}$.  Note that
  \[
  \alpha\gamma = \alpha (s_1 - d_1) \in (0, 1),
  \]
  since $0 < d_1 < s_1$ and it is assumed that $1/\alpha \leq s_1$. In
  particular,
  \[
  \lim_{k \to \infty} \frac{M_k}{\log k} = \infty.
  \]

  Consider an arbitrary point $x \in C$.  It is known that there is a
  constant $c_3$ such that $\theta(\oball{x}{2r_k}) \leq c_3
  r_k^{d_1}$, and it was shown in the first part of the example that
  there is a constant $c_4$ such that $\mu(V_k \cap \oball{x}{r_k}) =
  \mu(\oball{x}{r_k}) \geq c_4 r_k^{s_1}$. By Lemma~\ref{pklemma},
  there is a constant $c_5$ such that $\mu(V_k) \leq c_5 r_k^{s_1 -
    d_1}$. Thus
  \[
  \theta(\oball{x}{2r_k}) \leq c_3 r_k^{d_1}
  = c_3 \frac{r_k^{s_1}}{r_k^{s_1 - d_1}} \leq
  \frac{c_3c_5}{c_4} \frac{\mu(V_k \cap \oball{x}{r_k})}{\mu(V_k)},
  \]
  so that the hypotheses of Lemma~\ref{finallemma} are satisfied (the
  constants $c_3, c_4, c_5$ are independent of $x$ and $k$).

  From $M_k \geq c_2 k^{1 - \alpha\gamma}$ it follows that there is a
  constant $c_6$ such that $k_n \leq c_6 n^{1/(1 - \alpha\gamma)}$,
  and hence there is a constant $c_7$ such that $\rho_n \geq c_7
  n^{-\alpha / (1 - \alpha\gamma)}$. Thus by Lemma~\ref{finallemma}
  and Theorem~\ref{mainthm},
  \[
  f_\mu(\alpha) \geq f_\theta\left(\frac{(1 + \varepsilon)\alpha}{1 -
    \gamma\alpha}\right) = \frac{1 - \gamma\alpha}{(1 +
    \varepsilon)\alpha}
  \]
  for every sufficiently small $\varepsilon > 0$, so that
  \[
  f_\mu(\alpha) \geq \frac{1 - \gamma\alpha}{\alpha} =
  \frac{1}{\alpha} - (s_1 - d_1).
  \]
\end{exampleagain}

\end{document}